\newtheorem{thm}{Theorem}[section]
\newtheorem*{3k-4}{Freiman's $3k-4$ Theorem}
\newtheorem{lemma}[thm]{Lemma}
\newtheorem{conj}[thm]{Conjecture}
\newtheorem{prop}[thm]{Proposition}
\newtheorem{cclaim}{Claim}
\newtheorem*{claim*}{Claim}
\newtheorem*{claim3}{Claim 3$^{\prime}$}
\theoremstyle{definition}
\newtheorem*{defn}{Definition}
\newcommand{\ds}{\displaystyle}
\newcommand{\SF}{\mathrm{SF}}
\def\B{\mathcal{B}}
\def\HH{\mathcal{H}}
\def\I{\mathcal{I}}
\def\S{\mathcal{S}}
\def\X{\mathcal{X}}
\def\N{\mathbb{N}}
\def\Pr{\mathbb{P}}
\def\ZZ{\mathbb{Z}}
\def\a{\mathbf{a}}
\def\le{\leqslant}
\def\ge{\geqslant}
\def\eps{\varepsilon}
\def\<{\langle}
\def\>{\rangle}
\def\span{\textup{span}}
\title{A refinement of the Cameron-Erd\H{o}s Conjecture}
\date{\today}
\author{Noga Alon}
\address{School of Mathematical Sciences, Tel Aviv University, Tel Aviv 69978, Israel} \email{nogaa@post.tau.ac.il}
\author{J\'ozsef Balogh} 
\address{Department of Mathematics, University of Illinois, 1409 W. Green Street, Urbana, IL 61801} \email{jobal@math.uiuc.edu}
\author{Robert Morris} 
\address{IMPA, Estrada Dona Castorina 110, Jardim Bot\^anico, Rio de Janeiro, RJ, Brasil} \email{rob@impa.br}
\author{Wojciech Samotij}
\address{School of Mathematical Sciences, Tel Aviv University, Tel Aviv 69978, Israel; and Trinity College, Cambridge CB2 1TQ, UK} \email{ws299@cam.ac.uk}
\thanks{Research supported in part by: (NA) an ERC advanced grant, a USA-Israeli BSF grant, and the Israeli I-Core program; (JB) NSF CAREER Grant DMS-0745185, UIUC Campus Research Board Grant 11067, and OTKA Grant K76099; (RM) a CNPq bolsa de Produtividade em Pesquisa; (WS) ERC Advanced Grant DMMCA, and a Trinity College JRF}
\begin{document}

\begin{abstract}
In this paper we study sum-free subsets of the set $\{1,\ldots,n\}$, that is, subsets of the first $n$ positive integers which contain no solution to the equation $x + y = z$. Cameron and Erd\H{o}s conjectured in 1990 that the number of such sets is $O( 2^{n/2} )$. This conjecture was confirmed by Green and, independently, by Sapozhenko. Here we prove a refined version of their theorem, by showing that the number of sum-free subsets of $[n]$ of size $m$ is $2^{O(n/m)} {\lceil n/2 \rceil \choose m}$, for every $1 \le m \le \lceil n/2 \rceil$. For $m \ge \sqrt{n}$, this result is sharp up to the constant implicit in the $O(\cdot)$. Our proof uses a general bound on the number of independent sets of size $m$ in 3-uniform hypergraphs, proved recently by the authors, and new bounds on the number of integer partitions with small sumset. 
\end{abstract}

\maketitle

\section{Introduction}

What is the structure of a typical set of integers, of a given density, which avoids a certain arithmetic sub-structure? This fundamental question underlies much of Additive Combinatorics, and has been most extensively studied when the forbidden structure is a $k$-term arithmetic progression, see e.g.~\cite{Gow1,Gow2,GT1,Roth,SzAP}. General systems of linear equations have also been studied, beginning with Rado~\cite{Rado33} in 1933, and culminating in the recent advances of Green, Tao and Ziegler~\cite{GT2,GTZ}. The subject is extremely rich, and questions of this type have been attacked with tools from a wide variety of areas of mathematics, from Graph Theory to Number Theory, and from Ergodic Theory to Harmonic Analysis. See~\cite{TV} for an excellent introduction to the area.

In this paper we shall consider \emph{sum-free} sets of integers, that is, sets of integers which contain no solution of the equation $x + y = z$. It is easy to see that the odd numbers and the set $\{\lfloor n/2 \rfloor +1,\ldots,n\}$ are the largest such subsets of $[n] = \{1,\ldots,n\}$. Both of these sets have $\lceil n/2 \rceil$ elements, and therefore there are at least $2^{\lceil n/2 \rceil}$ sum-free sets in $[n]$. In 1990, Cameron and Erd\H{o}s~\cite{CE90} conjectured that this trivial lower bound is within a constant factor of the truth, that is, that the set $[n]$ contains only $O(2^{n/2})$ sum-free sets. Despite various attempts~\cite{Noga,Calk,Frei92}, their conjecture remained open for over ten years, until it was confirmed by Green~\cite{G04} and, independently, by Sapozhenko~\cite{Sap03}. We shall prove a natural generalization of the Cameron-Erd\H{o}s Conjecture, by bounding the number of sum-free subsets of $[n]$ of size $m$, for all $1 \le m \le \lceil n/2 \rceil$. Moreover, we shall also give a quite precise structural description of almost all sum-free subsets of $[n]$ of size $m \ge C \sqrt{n \log n}$. Our proof uses a general bound on the number of independent sets of size $m$ in 3-uniform hypergraphs, proved in~\cite{ABMS}, which allows one to deduce asymptotic structural results in the sparse setting (in fact, for all $m \gg \sqrt{n}$) from stability results in the dense setting (see Theorem~\ref{algprop}). The dense stability result we shall use (see Proposition~\ref{prop:Green}) was proved by Green~\cite{G04}. The second main ingredient in the proofs of our main theorems will be some new bounds on the number of sets of integers with small sumset (see Theorems~\ref{S+S} and~\ref{S+S2}). Finally, we shall use Freiman's $3k - 4$ Theorem (see below) to count sets with an extremely small sumset.

The study of sum-free sets of integers dates back to 1916, when Schur~\cite{Schur} proved that if $n$ is sufficiently large, then every $r$-colouring of $[n]$ contains a monochromatic triple $(x,y,z)$ with $x + y = z$. (Such triples are thus often referred to as \emph{Schur triples}.) Sum-free subsets of general Abelian groups have also been studied for many years, see e.g.~\cite{AW,AK,Cam87,Yap69}. Diananda and Yap~\cite{DY} and Green and Ruzsa~\cite{GR05} determined the maximum density $\mu(G)$ of a sum-free set in any finite Abelian group $G$, and in~\cite{GR05} it was moreover shown that any such group $G$ has $2^{(1 + o(1))\mu(G)|G|}$ sum-free subsets. For the group $\ZZ_p$, Sapozhenko~\cite{Sap09} determined the number of sum-free subsets up to a constant factor, and for finite Abelian groups of Type I (those for which $|G|$ has a prime divisor $q \equiv 2 \pmod 3$) Green and Ruzsa~\cite{GR05} were able to determine the asymptotic number of sum-free subsets of $G$.

One of the most significant recent developments in Combinatorics has been the formulation and proof of various `sparse analogues' of classical extremal, structural and Ramsey-type results. Beginning over 20 years ago (see, e.g.,~\cite{BSS,KLR,RR1,RR2}), and culminating in the recent breakthroughs of Conlon and Gowers~\cite{CG} and Schacht~\cite{Sch}, enormous progress has been made in understanding extremal structures in sparse random objects. For example, it is now known (see~\cite{CG,Sch}) that the theorem of Szemer{\'e}di~\cite{SzAP} on $k$-term arithmetic progressions extends to sparse random sets of density $p \gg n^{-1/(k-1)}$, but not to those of density $p \ll n^{-1/(k-1)}$. A sparse analogue of Schur's Theorem was proved by Graham, R\"odl and Ruci\'nski~\cite{GRR}, who showed that if $p \gg 1/\sqrt{n}$ and $B$ is a $p$-random subset\footnote{A $p$-random subset of a set $X$ is a random subset of $X$, where each element is included with probability $p$, independently of all other elements.} of $\ZZ_n$, then with high probability every $2$-colouring of $B$ contains a monochromatic solution of $x + y = z$. A sharp version of this theorem was proved by Friedgut, R\"odl, Ruci\'nski and Tetali~\cite{FRRT}, but the extremal version was open for 15 years before being resolved by Conlon and Gowers~\cite{CG} and Schacht~\cite{Sch}. Even more recently, Balogh, Morris and Samotij~\cite{BMS} sharpened this result by proving that, for any finite Abelian group $G$ of Type I($q$) (that is, $q \equiv 2 \pmod 3$ is the smallest such prime divisor of $|G| = n$) and $pn \ge C(q) \sqrt{n \log n}$, then with high probability every maximum-size sum-free subset of a $p$-random subset of $G$ is contained in some sum-free subset of $G$ of maximum size. In the case $G = \ZZ_{2n}$, they determined the sharp threshold.

For structural and enumerative results, such as our main theorem, results are known in only a few special cases. For example, Osthus, Pr\"omel and Taraz~\cite{OPT} proved that if $m \ge \big( \frac{\sqrt{3}}{4} + \eps \big) n^{3/2} \sqrt{\log n}$, then almost all triangle-free graphs with $m$ edges are bipartite, and that the constant $\sqrt{3}/4$ is best possible. This result can be seen as a sparse version of the classical theorem of Erd\H{o}s, Kleitman and Rothschild~\cite{EKR}, which states that almost all triangle-free graphs are bipartite.  In~\cite{ABMS}, the authors proved a sparse analogue of the result of Green and Ruzsa~\cite{GR05} mentioned above, by showing that if $m \ge C(q) \sqrt{n \log n}$, then almost every sum-free $m$-subset\footnote{An $m$-subset of a set $X$ is simply a subset of $X$ of size $m$.}  of $G$ is contained in some maximum-size sum-free set. We remark that there are only at most $|G|$ maximum-size sum-free subsets of such a group $G$, and that moreover they admit an elegant description.

In this paper we shall be interested in the corresponding question for the set $[n]$. As noted above, Cameron and Erd\H{o}s~\cite{CE90} conjectured, and Green~\cite{G04} and Sapozhenko~\cite{Sap03} proved, that there are only $O(2^{n/2})$ sum-free subsets of $[n]$. Our main result is the following `sparse analogue' of this theorem. 

\begin{thm}\label{CEthm}
There exists a constant $C > 0$ such that, for every $n \in \N$ and every $1 \le m \le \lceil n/2 \rceil$, the set $[n]$ contains at most $2^{Cn/m} {\lceil n/2 \rceil \choose m}$ sum-free sets of size $m$.
\end{thm}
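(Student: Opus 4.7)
The plan is to combine the hypergraph container method with Green's stability theorem. View $[n]$ as the vertex set of the 3-uniform hypergraph $H$ whose edges are the Schur triples $\{x,y,z\}$ with $x+y=z$; sum-free subsets of $[n]$ are precisely the independent sets of $H$, so the task is to count independent $m$-sets in $H$.

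First, I would apply Theorem~\ref{algprop}, the container-type result of~\cite{ABMS}, to $H$. This should yield a family $\F$ of subsets of $[n]$ with $|\F| \le 2^{O(n/m)}$ such that every sum-free $m$-subset of $[n]$ is contained in some $F \in \F$, and each container $F \in \F$ is ``almost sum-free'': it contains only a small fraction of the Schur triples, and in particular has size at most $(1/2 + o(1)) n$.

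Second, I would apply Green's stability theorem (Proposition~\ref{prop:Green}) to classify the containers. Any $F \in \F$ that is almost sum-free and has size close to $n/2$ must be ``close'' to a near-extremal sum-free subset of $[n]$. Up to a residual of size $O(n/m)$ (which can be absorbed into the $|\F|$ factor by enlarging it by a factor of $2^{O(n/m)}$), such $F$ lies inside one of a bounded number of structural templates---the odd numbers, the upper half $(\lfloor n/2 \rfloor, n]$, or the other near-maximum sum-free configurations such as shifted intervals in an arithmetic progression.

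Third, count sum-free $m$-subsets of each container. Writing $F \subseteq T \cup R$ with $T$ a template of size $\lceil n/2 \rceil$ and $|R| = O(n/m)$, the elementary estimate $\binom{a+b}{m}/\binom{a}{m} \le (1 + b/(a-m))^m$ yields
\[
\binom{|F|}{m} \;\le\; \binom{\lceil n/2\rceil + O(n/m)}{m} \;\le\; 2^{O(n/m)}\binom{\lceil n/2 \rceil}{m}.
\]
Summing this estimate over $\F$ gives the desired bound.

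In the range $m \ll \sqrt{n}$, the container approach is too coarse and the structural step degenerates; there I would instead count sum-free $m$-sets directly via the new bounds on sets with small sumset (Theorems~\ref{S+S} and~\ref{S+S2}), with Freiman's $3k-4$ Theorem controlling those of minimum sumset. The main obstacle, I expect, is calibrating the container theorem together with Green's stability so that each container exceeds $\lceil n/2 \rceil$ in size by at most $O(n/m)$---a weaker estimate on the residual $R$ would produce an unwanted $2^{\Omega(m)}$ factor from the binomial coefficients---and smoothly joining the small-$m$ and large-$m$ arguments at the threshold $m \approx \sqrt{n}$.
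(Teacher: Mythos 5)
Your first step---combining Theorem~\ref{algprop} with Green's stability (Proposition~\ref{prop:Green}) to reduce to sum-free sets that are close to an extremal template---is indeed how the paper begins (this is Proposition~\ref{CEprop}). But the rest of the proposal has a fatal gap, which you yourself flag as ``the main obstacle'': no container/stability argument can produce containers of size $\lceil n/2\rceil + O(n/m)$. What Theorem~\ref{algprop} together with stability actually delivers is that all but $2^{-\eps m}\binom{n/2}{m}$ sum-free $m$-sets $I$ satisfy $|I\setminus B|\le \delta m$ for some template $B$ (the odds or an interval of length $n/2$). The residual here is $\delta m$ elements of $I$ lying anywhere in $[n]$, so a naive count gives a factor of order $\binom{n}{\delta m}$, vastly larger than $2^{O(n/m)}$; and stability cannot be ``calibrated'' to shrink the residual to $O(n/m)$, since the $\beta,\gamma$ in the stability statement are fixed constants depending only on each other. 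The entire difficulty of the theorem lives in this leftover: one must show that the sum-free condition forces the few elements of $I$ outside the template to be so constrained that their contribution is only $2^{O(n/m)}$.

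Concretely, the paper handles the two templates by completely different and nontrivial arguments, neither of which appears in your plan for the main range of $m$. For $I$ close to the odds, a Janson-inequality computation (Proposition~\ref{prop:Janson2}) shows that each even element of $I$ kills so many pairs of odd numbers that having more than $O(n/m)$ of them is prohibitively costly. For $I$ close to an interval, the paper analyses $S(I)=I\cap[n/2]$: Claim~1 (Hypergeometric Janson) bounds the completions of a fixed $S$, induction on $n$ handles $|S(I)|\ge\delta m$, and the heart of the matter is that when $S(I)$ is small its sumset $S'+S'$ must also be small (else too few elements remain above $n/2$), so one needs to count sets with small sumset --- this is exactly where Theorems~\ref{S+S} and~\ref{S+S2} and Freiman's $3k-4$ Theorem enter. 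You have placed these tools in the regime $m\ll\sqrt n$, which is backwards: for $m\le\sqrt n$ the theorem is trivial because $2^{Cn/m}\binom{\lceil n/2\rceil}{m}\ge\binom{n}{m}$, whereas the sumset-counting theorems are indispensable precisely for $m\gg\sqrt n$. As written, the proposal omits the substance of the proof.
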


If $m \ge \sqrt{n}$, then Theorem~\ref{CEthm} is sharp up to the value of $C$, since in this case there is a constant $c > 0$ such that there are at least $2^{c n/m} {n/2 \choose m}$ sum-free $m$-subsets of $[n]$ (see Proposition~\ref{CElower}). Note that if $m \le \sqrt{n}$ then the result is trivial, since in this case our upper bound is greater than ${n \choose m}$. Since there are fewer than $2^{n/3}$ subsets of $[n]$ with at most $n/100$ elements, Theorem~\ref{CEthm} easily implies the Cameron-Erd{\H{o}}s Conjecture. However, Theorem~\ref{CEthm} only implies that there are $O(2^{n/2})$ sum-free subsets of $[n]$, whereas Green~\cite{G04} and Sapozhenko~\cite{Sap03} proved that there are asymptotically $c(n)2^{n/2}$ such sets, where $c(n)$ takes two different constant values according to whether $n$ is even or odd.  Since for us the parity of $n$ will not matter, we shall assume for simplicity throughout the paper that $n$ is even; the proof in the case $n$ is odd is identical. 

We shall also prove the following structural description of a typical sum-free $m$-subset of $[n]$. Let $O_n$ denote the set of odd numbers in $[n]$. 

\begin{thm}\label{CEstruc}
There exists $C > 0$ such that if $n \in \N$ and $m \ge C \sqrt{n \log n}$, then almost every sum-free subset $I \subset [n]$ of size $m$ satisfies either $I \subset O_n$, or 
$$|S(I)| \, \le \, \frac{Cn}{m} + \omega(n) \qquad \text{and} \qquad \ds\sum_{a \in S(I)} \left( \frac{n}{2} - a \right) \, \le \, \frac{Cn^3}{m^3} + \omega(n),$$
where $S(I) = \{x \in I : x \le n/2\}$, and $\omega(n) \to \infty$ arbitrarily slowly as $n \to \infty$. 
\end{thm}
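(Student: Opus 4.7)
The plan is to combine the hypergraph container result (Theorem~\ref{algprop}) with Green's stability theorem (Proposition~\ref{prop:Green}) to partition the sum-free $m$-subsets of $[n]$ according to a small family of containers close to one of the two extremal sum-free sets, and then to use the sumset bounds in Theorems~\ref{S+S} and~\ref{S+S2}, together with Freiman's $3k-4$ Theorem, to show that the sum-free $m$-subsets violating the stated conclusion form a negligible fraction of all such sets (the relevant denominator being at most $2^{O(n/m)} \binom{n/2}{m}$ by Theorem~\ref{CEthm}).

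I would first apply Theorem~\ref{algprop} to the $3$-uniform Schur hypergraph on $[n]$, whose edges are the Schur triples $\{x,y,x+y\}$, to obtain a small family $\F$ of containers $F \subset [n]$ such that every sum-free $I \subset [n]$ lies in some $F \in \F$ and each $F$ is ``almost sum-free''. Proposition~\ref{prop:Green} then implies that each container $F$ is close in symmetric difference either to the odd numbers $O_n$ or to the interval $(n/2,n]$ (with the other Green-type extremal variants handled analogously). For odd-type containers, any $I \subset F$ with $I \not\subset O_n$ must pick at least one element from $F \setminus O_n$; by a refined container analysis that restricts attention to containers with $|F \setminus O_n|$ at most $\omega(n)$, a direct counting against the benchmark $\binom{n/2}{m}$ shows that almost all $I$ in odd-type containers satisfy $I \subset O_n$.

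For interval-type containers, I would fix $A := S(I) = I \cap [1,n/2]$ and count the completions $T := I \setminus A$ inside $(n/2,n]$. The sum-free condition forces $T$ to avoid $A+A$ (and, for each $a \in A$, to avoid the shift of $T$ by $a$ within $[n]$), so in particular $T$ omits at least $|(A+A)\cap(n/2,n]|$ elements of $(n/2,n]$, yielding at most $\binom{n/2 - |(A+A)\cap(n/2,n]|}{m - |A|}$ completions. The contribution from bad $I$ in a single interval-type container is then bounded by
\[
\sum_{(k,d)\text{ bad}} N(k,d) \cdot \binom{n/2 - s(k,d)}{m-k},
\]
where $N(k,d)$ counts sum-free $A \subset [1,n/2]$ with $|A| = k$ and $\sum_{a \in A}(n/2 - a) = d$, and $s(k,d)$ is a lower bound on $|(A+A)\cap(n/2,n]|$ valid for such $A$. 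To estimate $N(k,d)$ I would apply Freiman's $3k-4$ Theorem in the minimal-doubling regime (forcing $A$ to lie in a short arithmetic progression, which is parametrised by only $O(n^2)$ choices) and Theorems~\ref{S+S}, \ref{S+S2} in the larger-doubling regime, where the larger sumset itself kills the completion binomial.

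The main obstacle is the final balancing step: both ``$|S(I)|$ too large'' and ``$S(I)$ too spread out below $n/2$'' must be ruled out simultaneously, and one must optimise the resulting geometric tails in $(k,d)$ against the $N(k,d)$ bounds. The slack $\omega(n) \to \infty$ is used precisely to force these tails to sum to $o(1)$; without it, marginal configurations (for example, $A$ a short arithmetic progression of length $\Theta(n/m)$ ending near $n/2$) would contribute a constant proportion of all sum-free $m$-subsets, matching the lower bound of Proposition~\ref{CElower}.
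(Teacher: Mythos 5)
Your architecture matches the paper's at the top level (containers via Theorem~\ref{algprop} plus Green's stability to reduce to sets close to $O_n$ or to an interval, then sumset counting with Theorems~\ref{S+S}, \ref{S+S2} and Freiman's $3k-4$ Theorem in the hard range), but two essential mechanisms are missing, and both are supplied in the paper by the Hypergeometric Janson Inequality. First, for the odd-type case: the container/stability step only yields $|I \setminus O_n| \le \delta m$, and there is no ``refined container analysis'' restricting to containers $F$ with $|F \setminus O_n| \le \omega(n)$ --- the containers are sets of size roughly $n/2$ that are close to $O_n$ only at scale $\gamma n$. To show that almost no sum-free $I$ has $1 \le |I \setminus O_n| \le \delta m$ one must use that each even element $z$ forbids $\Theta(n)$ pairs $\{x,y\} \subset O_n$ with $x+y=z$ or $x-y=z$, and that a uniformly random $(m-k)$-subset of $O_n$ avoids all of them with probability $e^{-\Omega(km^2/n)}$; these events are heavily correlated, so this is a Janson-type argument (Proposition~\ref{prop:Janson}), not direct counting.

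Second, and more seriously, your bound for interval-type containers penalizes a fixed $A=S(I)$ only through a lower bound on $|(A+A)\cap(n/2,n]|$, which carries no information about $d=\sum_{a\in A}(n/2-a)$. You mention the shift constraints parenthetically but never quantify them, and without them the bound on $k(I)$ is unobtainable: for example, $A$ an interval of length $\ell$ near $n/4$ forces only about $2\ell$ elements of $(n/2,n]$ to be omitted, yet has $d\approx \ell n/4$; summing $N(k,d)\approx (e^2 d/k^2)^k$ over $d$ with no $d$-dependent penalty then blows up. The paper's Claim~\ref{claim:HJI} applies Janson to the graph of pairs $\{x,x+s\}$, $s\in A$, which has exactly $d$ edges, yielding the factor $\max\{e^{-dm^2/2n^2}, e^{-dm/8n\ell}\}$ that drives Claims~4 and~5 and the entire $k(I)$ bound. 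A smaller logical point: the denominator for ``almost every'' must be a lower bound on the total number of sum-free $m$-sets (the trivial $\binom{n/2}{m}$, or Proposition~\ref{CElower}), not the upper bound $2^{O(n/m)}\binom{n/2}{m}$ from Theorem~\ref{CEthm}; showing the bad sets are $o(\cdot)$ of an upper bound on the total proves nothing. You also do not address the reduction from ``$I$ is close to \emph{some} interval of length $n/2$'' to ``$S(I)$ is small'', which the paper handles in Claims~2 and~3 via induction on $n$.
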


We remark that the upper bounds on $|S(I)|$ and $k(I) := \sum_{a \in S(I)}(n/2 - a)$ in Theorem~\ref{CEstruc} are sharp up to a constant factor (see Section~\ref{CESec}). Indeed, we shall show that if $m = o(n)$, then almost all sum-free $m$-sets $I \subset [n]$ have $|S(I)| = \Omega(n/m)$ and $k(I) = \Omega(n^3/m^3)$.

Our proof of Theorems~\ref{CEthm} and~\ref{CEstruc} has two main components. The first is a bound on the number of independent $m$-sets in 3-uniform hypergraphs (see Theorem~\ref{algprop}), which was proved in~\cite{ABMS}, and used there to determine the asymptotic number of sum-free $m$-subsets of a finite Abelian group $G$ such that $|G|$ has a prime factor $q \equiv 2 \pmod 3$, for every $m \ge C(q) \sqrt{n \log n}$. Using this theorem, together with a stability result from~\cite{G04} (which follows from a result of Lev, \L uczak and Schoen~\cite{LLS}) it will be straightforward to bound the number of sum-free $m$-sets which contain at least $\delta m$ even numbers, and at least $\delta m$ elements less than $n/2$.

The second component involves counting \emph{restricted integer partitions with small sumset}. Recall that $p(k)$ denotes the number of integer partitions of $k$, so, for example, $p(3) = 3$ since $3 = 2+1 = 1+1+1$. In 1918, Hardy and Ramanujan~\cite{HR} obtained an asymptotic formula for $p(k)$, proving that 
$$p(k) \, = \, \frac{1 + o(1)}{4k\sqrt{3}} e^{\pi \sqrt{2k/3}}.$$
We shall study the following type of `restricted' partition. Let $p_\ell^*(k)$ denote the number of integer partitions of $k$ into $\ell$ distinct parts, i.e., the number of sets $S \subset \N$ such that $|S| = \ell$ and $\sum_{a \in S} a = k$. Thus, for example, $p^*_3(8) = 2$, since $8 = 5+2+1 = 4+3+1$. It is straightforward to show that $p_\ell^*(k) \le \big( \frac{e^2k}{\ell^2} \big)^\ell$, see Lemma~\ref{parts}. 

We shall bound the number of such partitions under the following more restrictive condition. Recall that, given sets $A,B \subset \N$, the sumset $A+B$ is defined to be the set $\{ a + b : a \in A, \, b \in B \}$. The following theorem bounds the number of partitions of $k$ into $\ell$ distinct parts, such that the resulting set $S$ has `small' sumset $S+S$. 

\begin{thm}\label{S+S}
For every $c_0 > 0$ and $\delta > 0$, there exists a $C = C(\delta,c_0) > 0$ such that the following holds. If $\ell^3 \ge Ck$ and $c \ge c_0$, then there are at most
$$2^{\delta \ell} \left( \frac{2cek}{3\ell^2} \right)^\ell$$
sets $S \subset \N$ with $|S| = \ell$, $\sum_{a \in S} a = k$ and $|S + S| \le ck / \ell$.
\end{thm}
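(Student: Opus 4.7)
The plan is to deduce Theorem~\ref{S+S} from a structural theorem for sets with small sumset, together with a routine count of $\ell$-subsets of structured containers.

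The main structural input will be Freiman's $3k-4$ theorem: if $|A+A| \le 3|A| - 4$, then $A$ is contained in an arithmetic progression of length $|A+A| - |A| + 1$. Applied to $S$ in the regime $|S+S| \le 3\ell - 4$, this places $S$ inside an AP of length $L = |S+S| - \ell + 1$. Since $|S+S| \le 3\ell - 3$ in this regime, $L \le \tfrac{2}{3}|S+S|$, and in particular $L \le 2ck/(3\ell)$. The number of $\ell$-subsets of an AP of length $L$ is at most $\binom{L}{\ell} \le (eL/\ell)^\ell \le (2cek/(3\ell^2))^\ell$, which is exactly the main term of the target bound. The number of candidate APs (parametrised by first term and common difference, with the first term in a range of size $O(k/\ell)$) is polynomial in $k$; since the hypothesis $\ell^3 \ge Ck$ forces $k \le \ell^3/C$, this polynomial factor is absorbed into $2^{\delta \ell}$ once $C$ is large enough in terms of $\delta$ and $c_0$.

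For the intermediate range $3\ell - 4 < |S+S| \le ck/\ell$, Freiman's $3k-4$ theorem does not apply directly, and the doubling constant $|S+S|/|S|$ may be as large as $c\ell/C$. Here I would rely on a covering-type argument: either the Pl\"unnecke--Ruzsa inequality combined with the Ruzsa covering lemma to express $S$ as a subset of a small union of translates of short AP's, or the Freiman--Ruzsa theorem to place $S$ inside a generalised arithmetic progression of bounded dimension and size $O(|S+S|)$. Either way one counts container-by-container, with the overall polynomial overhead for enumerating containers again absorbed into $2^{\delta \ell}$ via $\ell^3 \ge Ck$.

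The principal obstacle is this intermediate range: showing that every $S$ with $|S| = \ell$, $\sum_{a \in S} a = k$ and $|S+S| \le ck/\ell$ is contained in a structured set of size at most about $\tfrac{2}{3}|S+S|$. The constant $\tfrac{2}{3}$ is exactly what Freiman's $3k-4$ theorem yields in its natural regime, so the difficulty is to extend this conclusion to larger doubling while preserving the length bound. The sum constraint $\sum a = k$ should play a crucial role, ruling out overly long AP-like containers whose elements could not average to $k/\ell$, and the hypothesis $\ell^3 \ge Ck$ should control both the dimension of the container and the polynomial enumeration overhead against the $2^{\delta \ell}$ slack.
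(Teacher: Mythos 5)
There is a genuine gap, and you have in fact located it yourself: the entire content of Theorem~\ref{S+S} lies in the range $3\ell \le |S+S| \le ck/\ell$, and your proposal offers no working argument there. Your Case 1 (Freiman's $3k-4$ theorem when $|S+S|\le 3\ell-4$) is sound, and indeed the paper uses exactly this idea elsewhere (in Case~3 of Claim~6 of the main proof, for sets with $|S'+S'| < 3(1-\delta)|S'|$), but it is not how Theorem~\ref{S+S} itself is proved. In the intermediate range the doubling constant $K = |S+S|/\ell$ can be as large as $ck/\ell^2 \le c\ell/C$, i.e.\ linear in $\ell$. None of the tools you name survives this: Pl\"unnecke--Ruzsa plus Ruzsa covering, or the Freiman--Ruzsa theorem, produce containers whose size is at best $\exp(K^{O(1)})\,\ell$ (and whose enumeration overhead is not polynomial), which for $K$ growing with $\ell$ is not merely worse than $\tfrac23|S+S|$ --- it is not even $O(|S+S|)$, so a container count cannot recover the target bound, nor even the weaker bound $2^{\delta\ell}(cek/\ell^2)^\ell$. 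The hope that the constraint $\sum_{a\in S}a=k$ ``rules out overly long containers'' is not substantiated and does not by itself reduce the container size by the required constant factor.

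The paper's route is entirely different and avoids global structure theory. One counts ordered sequences $(a_1,\dots,a_\ell)$ with underlying set $S$, revealing elements one at a time; writing $S_j=\{a_1,\dots,a_j\}\cap J$ for a suitable interval $J$ with ``dense ends'', the number of admissible choices for $a_{j+1}$ is bounded by $|B_j|$, where $B_j$ is the set of $y$ with $\bigl|(S_j+y)\setminus(S_j+S_j)\bigr|\le\delta^6|S_j|$. A double-counting observation already gives $|B_j|\lesssim|S+S|$ (hence the weak bound, losing $(3/2)^\ell$); the crucial extra input is the Claim that $|B_j|\le(\tfrac23+\delta)|S+S|$. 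This is proved by a span comparison: because $S$ is dense near both endpoints of $J$, one gets $\mathrm{span}(B_j)\le(1+\sqrt\delta)\,\mathrm{span}(S_j)$, and then the sets $A_*+B_j$ and $A^*+B_j$ (with $A_*,A^*$ the extremes of the set of ``popular'' elements of $S_j$) are almost disjoint and almost contained in $S_j+S_j$, forcing $|S_j+S_j|\ge(\tfrac32-2\delta)|B_j|$. If you want to complete your write-up you would need to supply an argument of this kind for the intermediate range; the structural/container strategy as proposed will not close it.
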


Sets with small sumset are a central object of interest in Combinatorial Number Theory, and have been extensively studied in recent years (see, e.g.,~\cite{TV}). It is easy to see that if $A,B \subset \ZZ$, then $|A + B| \ge |A| + |B| - 1$, with equality if and only if $A$ and $B$ are arithmetic progressions with the same common difference. The Cauchy-Davenport Theorem, proved by Cauchy~\cite{Cauchy} in 1813 and rediscovered by Davenport~\cite{Dav} in 1935, says that this result extends to the group $\ZZ_p$; more precisely, that 
$$|A + B| \, \ge \, \min \big\{ |A| + |B| - 1, \, p \big\}.$$
Many extensions of these results are now known; for example, the Freiman-Ruzsa Theorem (see~\cite{Frei59,Ruz94}) states that if $A \subset \ZZ$ and $|A+A| \le C|A|$, then $A$ is contained in a $O(1)$-dimensional generalized arithmetic progression of size $O(|A|)$. This result itself has many generalizations, culminating in the very recent theorem of Breuillard, Green and Tao~\cite{BGT}, which is stated in the language of approximate groups. 

Despite the enormous interest in such problems, very little seems to be known about the \emph{number} of different sets with small sumset (see~\cite{G05}, for example). The following classical result, proved by Freiman~\cite{Frei59} in 1959, implies a bound for sets with so-called `doubling constant' less than $3$. 

\begin{3k-4}\label{3k-4}
If $A \subset \ZZ$ satisfies $|A+A| \le 3|A| - 4$, then $A$ is contained in an arithmetic progression of size at most $|A+A| - |A| + 1$. 
\end{3k-4}

Observe that this implies that, for all $\lambda < 3$, there are at most $2^{o(\ell)} {(\lambda - 1)\ell \choose \ell}$ sets $S \subset \ZZ$ such that $|S| = \ell$ and $|S+S| \le \lambda \ell$, up to equivalence under translation and dilation. (That is, if we assume that $\min(S) = 0$ and $S$ has no common divisor greater than one.) Our final theorem, which also follows from the proof of Theorem~\ref{S+S}, provides a similar bound whenever $|S+S| = O(|S|)$.\footnote{Throughout the paper, $f(n) = O(g(n))$ means there exists an absolute constant, independent of all other variables, such that $f(n) \le Cg(n)$.} The following result will be crucial in the proof of Theorems~\ref{CEthm} and~\ref{CEstruc} in the case $m = \Theta(n)$. 

\begin{thm}\label{S+S2}
Let $\delta > 0$, and suppose that $\ell \in \N$ is sufficiently large and that $k \le \ell^2 / \delta$. Then for each $\lambda \ge 2$, there are at most
$$2^{\delta \ell} \left( \frac{(4\lambda - 3)e}{6} \right)^\ell$$
sets $S \subset \N$ with $|S| = \ell$, $\sum_{a \in S} a = k$, and $|S + S| \le \lambda \ell$.
\end{thm}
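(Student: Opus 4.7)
The plan is to follow the proof of Theorem~\ref{S+S}, sharpening the final counting step. Setting $c := \lambda\ell^2/k$ makes the sumset hypotheses match, and under $k \le \ell^2/\delta$ one has $c \ge \lambda\delta \ge 2\delta$ and (for $\ell$ sufficiently large) $\ell^3 \ge C\ell^2/\delta$, so Theorem~\ref{S+S} applies and yields a bound of $2^{\delta'\ell}(2e\lambda/3)^\ell$ for any $\delta'>0$. This exceeds the target by a factor of $(1 - 3/(4\lambda))^{-\ell}$, which is as large as $(8/5)^\ell$ at $\lambda = 2$ and shrinks to $1$ as $\lambda \to \infty$; the saving must come from inside the argument, not from absorbing it after the fact.

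Reading $(2cek/(3\ell^2))^\ell = (en'/\ell)^\ell$ with $n' = 2|S+S|/3$ as the loose estimate $\binom{n'}{\ell} \le (en'/\ell)^\ell$, one infers that the proof of Theorem~\ref{S+S} places $S$ inside a container $T \supseteq S$ of size at most $2|S+S|/3 = (2\lambda/3)\ell$. The refinement I would attempt is to shrink $T$ by a further $\ell/2$, to size $(2\lambda/3)\ell - \ell/2 = (4\lambda-3)\ell/6$; the loose binomial bound applied to this smaller container yields exactly $((4\lambda-3)e/6)^\ell$. A plausible mechanism for the shrinking is to reveal a bounded fraction of $S$ first---say the $\ell/2$ smallest elements---and use those revealed elements to constrain the container in which the remaining elements can lie, taking advantage of the fact that each revealed $a\in S$ satisfies $a + S \subseteq S+S$ and hence restricts the possible locations of further elements.

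The main obstacle is that this shrinking is impossible when $\lambda < 9/4$, since $(4\lambda-3)\ell/6 < \ell = |S|$ in that regime. I would handle small $\lambda$ separately via Freiman's $3k-4$ Theorem: for $\lambda < 3$, it forces $S$ into an arithmetic progression of length at most $(\lambda-1)\ell+1$, there are only polynomially (in $k, \ell$) many such APs with $\sum_{a \in S} a = k$, and a direct Stirling check confirms $\binom{(\lambda-1)\ell+1}{\ell} \le ((4\lambda-3)e/6)^\ell$ for every $\lambda \in [2, 3]$ (with enormous slack near $\lambda = 2$, where the left side is merely polynomial). For $\lambda \ge 3$ the container-shrinking strategy applies, and the remaining technical work---enforcing the sum constraint $\sum_{a \in S} a = k$ without losing the $\ell/2$ savings to polynomial overhead, uniformly in $\lambda$---is the delicate part, and is where I expect to spend most of the effort in a full write-up.
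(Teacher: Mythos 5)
You have correctly located both the quantitative target (a saving of $\ell/2$ in the effective container size, turning $2\lambda\ell/3$ into $(4\lambda-3)\ell/6$) and its source (previously revealed elements of $S$ constrain the later choices). But the mechanism you propose --- reveal the $\ell/2$ smallest elements up front and then shrink a single static container by $\ell/2$ for all remaining elements --- is not how the saving is realised, and it is exactly this framing that creates your spurious obstacle at $\lambda < 9/4$. The paper's proof of Theorem~\ref{S+S2} simply reruns the sequential argument of Theorem~\ref{S+S} with two additional observations: (i) $S_j \subseteq B_j$, because every $a \in S_j$ trivially satisfies $(S_j + a) \setminus (S_j+S_j) = \emptyset$; and (ii) $a_{j+1} \notin S_j$, since the elements of the sequence are distinct. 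Hence at step $j+1$ the number of choices for $a_{j+1}$ is not $|B_j|$ but $|B_j| - |S_j| \le \big( \tfrac{2}{3} + \delta^3 \big)|S+S| - |S_j|$, which is nonnegative for every $j$ and every $\lambda \ge 2$; there is never a ``container smaller than $|S|$'' issue, because the deduction of $|S_j|$ is made per step rather than once for all. The product $\prod_{j}\big(|B_j| - |S_j|\big)$ is then bounded via the AM--GM inequality using $|S_j| \ge j - q - b$, and since the average of $j$ over $j \in [\ell]$ is about $\ell/2$, this is precisely where the $-\ell/2$ enters: one obtains roughly $\big( \tfrac{2\lambda}{3}\ell - \tfrac{\ell}{2} + O(\delta^2 \ell) \big)^{\ell}$, and dividing by $\ell!$ gives $\big( (4\lambda-3)e/6 \big)^\ell$ up to $2^{O(\delta\ell)}$, uniformly in $\lambda \ge 2$.

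Consequently the Freiman $3k-4$ detour for $2 \le \lambda < 3$ is unnecessary here (though your Stirling check is correct, and the paper does deploy exactly that device elsewhere, in the proof of Theorem~\ref{CEthm}), and the ``delicate part'' you defer --- preserving the constraint $\sum_{a \in S} a = k$ without losing the saving --- costs nothing beyond what is already paid in the proof of Theorem~\ref{S+S}: the sum constraint is only invoked through the factor $\binom{k+q-1}{q-1}$ attached to the small exceptional set of indices $Q$, while the $\ell - q - b$ generic steps are counted by $|B_j|-|S_j|$ alone. As written, your proposal is a plan rather than a proof; the missing ingredient is the per-step subtraction given by (i) and (ii) together with the AM--GM step, which is what makes the argument work for all $\lambda \ge 2$ at once.
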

 
Theorems~\ref{S+S} and~\ref{S+S2} are sufficient for our purposes; however, we believe the following stronger bound to be true. 

\begin{conj}\label{S+Sconj}
For every $\delta > 0$, there exists $C > 0$ such that the following holds. If $m \ge C\sqrt{N}$ and $m \ge C \log n$, then there are at most
$$2^{\delta m} {N/2 \choose m}$$
sets $S \subset [n]$ with $|S| = m$ and $|S + S| \le N$. 
\end{conj}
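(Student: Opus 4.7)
The plan is to prove Conjecture~\ref{S+Sconj} by combining a Freiman-type structure theorem with a direct enumeration argument. The structural heart of the proof is the claim that every set $S \subset [n]$ with $|S+S| \le N$ and $|S| = m \ge C \sqrt{N}$ is, after translation so that $\min S = 1$, contained in some arithmetic progression $P = \{1, 1+r, \ldots, 1+(L-1)r\}$ with $L \le N/2$, $r \ge 1$, and $(L-1)r \le n-1$.

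Granting this structural claim, the enumeration proceeds as follows. Translating $S$ so that $\min S = 1$ reduces the task to counting such shapes, at the cost of a multiplicative factor of $n \le 2^{\delta m / 2}$ (using $m \ge C \log n$ with $C \ge 2/\delta$). Each shape $B$ contributes to the count of pairs (shape, containing AP); for a given AP $P$ of length $L$, there are $\binom{L-1}{m-1}$ choices for $B \subset P$ with $1 \in B$, so
$$\sum_{L=m}^{N/2}\left\lfloor\frac{n-1}{L-1}\right\rfloor \binom{L-1}{m-1} \le \frac{n}{m}\sum_{L=m}^{N/2}\binom{L-1}{m-1} = \frac{n}{m}\binom{N/2}{m} \le 2^{\delta m / 2}\binom{N/2}{m}$$
by the hockey-stick identity, yielding the target $2^{\delta m}\binom{N/2}{m}$ after multiplying by the translation factor.

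The main obstacle is establishing the structure theorem. For small doubling $|B+B| \le 3|B| - 4$, Freiman's $3k-4$ Theorem immediately provides containment in an AP of length $|B+B| - |B| + 1 \le N/2$, so this case is already handled. In the harder range $3 \le |B+B|/|B| \le m/C^2$ permitted by the hypothesis, the general Freiman-Ruzsa theorem only yields a multidimensional generalized arithmetic progression whose dimension grows with the doubling, rather than a one-dimensional AP. One plausible route uses the hypothesis $m \ge C\sqrt{N}$ to force many additive collisions (each element of $B+B$ has on average at least $m^2/N \ge C^2$ representations as $b_1 + b_2$), then combines the Balog-Szemer\'edi-Gowers theorem with Freiman's theorem to extract a large subset $B' \subset B$ lying in a short AP, and finally argues that the remaining elements of $B \setminus B'$ also fit into an AP of length $\le N/2$. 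An alternative, closer in spirit to the methods of this paper, is to stratify shapes by $k = \sum_{b \in B} b$ and apply Theorems~\ref{S+S} and~\ref{S+S2} for each $k$; however, the bounds there of the form $(2e\lambda/3)^m$ exceed the target $\binom{N/2}{m} \approx (eN/(2m))^m$ by a factor of roughly $(4/3)^m$, so closing this gap would require a genuinely sharper sum-constrained enumeration.
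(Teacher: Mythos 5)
First, be aware that the statement you are proving is stated in the paper as Conjecture~\ref{S+Sconj}: the authors do not prove it, and at the end of Section~\ref{partSec} they explain precisely why their own machinery (Theorems~\ref{S+S} and~\ref{S+S2}) falls short by an exponential factor, exhibiting a set of the form $(T+z)\cup(T+2z)$ for which their subclaim is tight and concluding that a proof ``would have to use some structural properties of $S_j$''. Your closing paragraph, which observes that stratifying by $k=\sum_{b\in B}b$ and invoking Theorems~\ref{S+S} and~\ref{S+S2} misses the target by roughly $(4/3)^m$, lands you in exactly the same place as the authors; that route is not a repair, it is the known obstruction.

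The proposal itself has a fatal gap: the structural claim on which the entire enumeration rests is false. Take $S=\{1,\ldots,m/2\}\cup\{T+1,\ldots,T+m/2\}$ with $T$ large (say $T=n-m/2$). Then $|S+S|=3(m-1)\le N$ with $N=3m$, and the hypothesis $m\ge C\sqrt{N}$ holds once $m\ge 3C^2$. But since $1,2\in S$, any arithmetic progression containing $S$ has common difference $1$ and therefore length at least $T+m/2\approx n\gg N/2$. So a set with $|S+S|\le N$ need not lie in a single AP of length $N/2$, even approximately; it can at best be covered by several APs, and controlling simultaneously the number of pieces and their total length to within a factor $1+\delta$ of $N/2$ is essentially the content of the conjecture itself. (The two-interval sets do not refute the conjecture, since there are only polynomially many of them, but they do refute the lemma you apply to \emph{every} $S$.) Your suggested repairs do not close this: the doubling constant here can be as large as $N/m\approx\sqrt{N}/C$, hence unbounded, so Freiman--Ruzsa yields a generalized AP whose dimension and size blow up; and Balog--Szemer\'edi--Gowers only extracts a large structured subset $B'\subset B$, leaving precisely the problem of the remaining elements, which the example shows genuinely need not fit into the same short progression. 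As written, the argument is a strategy whose central lemma is both unproven and false, and the conjecture remains open.
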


Since $|S+S| \le N$ for every $m$-subset $S \subset [N/2]$, the conjecture (if true) is close to optimal. Note that the condition $m \ge C \log n$ implies that $n \le 2^{m/C}$, and thus guarantees that the number of translates of a given set $S$ is negligible. 

The rest of the paper is organised as follows. In Section~\ref{GenThmSec}, we shall recall the general structural theorem from~\cite{ABMS} and deduce from it a bound on the number of sum-free $m$-sets which contain at least $\delta m$ even numbers, and at least $\delta m$ elements less than $n/2$. In Section~\ref{lowerSec} we shall prove a lower bound on the number of sum-free $m$-subsets of $[n]$, and in Section~\ref{JansonSec} we shall use Janson's inequality to bound the number of sum-free sets which contain at most $\delta m$ even numbers. In Section~\ref{partSec} we shall prove Theorems~\ref{S+S} and~\ref{S+S2}. Finally, in Section~\ref{CESec}, we shall prove Theorems~\ref{CEthm} and~\ref{CEstruc}.

\section{Preliminaries}\label{GenThmSec}

In this section we shall recall some of the main tools we shall use in the proofs of Theorems~\ref{CEthm} and~\ref{CEstruc}, and deduce that almost all sum-free $m$-sets $I \subset [n]$  either contain at most $\delta m$ even elements, or satisfy $| I \setminus B | \le \delta m$ for some interval $B$ of length $n/2$.

\subsection{A structural theorem for 3-uniform hypergraphs}

We begin by recalling from~\cite{ABMS} our main tool: a theorem which allows one to deduce asymptotic structural results for sparse sum-free sets from stability results for dense sum-free sets. It is stated in the language of general $3$-uniform (sequences of) hypergraphs $\HH = (\HH_n)_{n \in \N}$, where $|V(\HH_n)| = n$. Throughout this section, the reader should think of $\HH_n$ as encoding the Schur triples (that is, triples $(x,y,z)$ with $x + y = z$) in $[n]$.

We next recall the dense stability property which we shall use. Let $\alpha \in (0,1)$ and let $\B = (\B_n)_{n \in \N}$, where $\B_n$ is a family of subsets of $V(\HH_n)$. We shall write $|\B_n|$ for the number of sets in $\B_n$, and set $\|\B_n\| = \max\{ |B| : B \in \B_n\}$. 

\begin{defn}
A sequence of hypergraphs $\HH = (\HH_n)_{n \in \N}$ is said to be \emph{$(\alpha,\B)$-stable} if for every $\gamma > 0$ there exists $\beta > 0$ such that the following holds. If $A \subset V(\HH_n)$ satisfies $|A| \ge (\alpha - \beta) n$, then either $e(\HH_n[A]) \ge \beta e(\HH_n)$, or $|A \setminus B| \le \gamma n$ for some $B \in \B_n$.
\end{defn}

Roughly speaking, a sequence of hypergraphs $(\HH_n)$ is $(\alpha, \B)$-stable if for every $A \subseteq V(\HH_n)$ such that $|A|$ is almost as large as the independence number for $\HH_n$, the set $A$ is either very close to some `extremal' set $B \in \B_n$, or it contains many (i.e., a positive fraction of all) edges of $\HH_n$. 

If $\HH_n$ is a hypergraph and $m \in \N$, then let $\SF(\HH_n,m)$ denote the collection of independent sets in $\HH_n$ of size $m$. Given a family of sets $\B_n$ and $\delta > 0$, we define
$$\SF^{(\delta)}_{\ge}(\HH_n,\B_n,m) \, = \, \Big\{ I \in \SF(\HH_n,m) \,\colon\, |I \setminus B| \ge \delta m \textup{ for every } B \in \B_n \Big\}.$$
Finally, for each $T \subset V(\HH_n)$, let $d_{\HH_n}(T) = \big| \big\{ e \in \HH_n \,:\, T \subset e \big\} \big|$ and define
$$\Delta_2(\HH_n) \,=\, \max \big\{ d_{\HH_n}(T) \,:\, T \subset V(\HH_n), \, |T| = 2 \big\}.$$
Note that if $\HH_n$ encodes Schur triples in $[n]$, then $\Delta_2(\HH_n) \le 2$. 

The following theorem, which was proved in~\cite{ABMS}, shows that if $\HH$ is $(\alpha,\B)$-stable and $m \gg \sqrt{n}$, then there are very few independent sets (i.e., sum-free sets) in $\HH_n$ of size $m$ which are far from every set $B \in \B_n$.

\begin{thm}[Theorem~4.1 of~\cite{ABMS}]\label{algprop}
Let $\alpha > 0$ and let $\HH = (\HH_n)_{n \in \N}$ be a sequence of $3$-uniform hypergraphs which is $(\alpha,\B)$-stable, has $e(\HH_n) = \Theta(n^2)$ and $\Delta_2(\HH_n) = O(1)$. If $\|\B_n\| \ge \alpha n$, then for every $\delta > 0$, there exists a $C > 0$ such that the following holds. If $m \ge C\sqrt{n}$ and $n$ is sufficiently large, then 
$$\big| \SF^{(\delta)}_{\ge}(\HH_n,\B_n,m) \big| \,\le \, \Big( 2^{-\eps m} + \delta^m |\B_n| \Big) {\|\B_n\| \choose m}$$
for some $\eps = \eps(\HH, \delta) > 0$.
\end{thm}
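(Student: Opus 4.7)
The plan is to apply the hypergraph container method, as carried out in [ABMS]. We construct a deterministic procedure which, on input $I \in \SF(\HH_n, m)$, outputs a fingerprint $S = S(I) \subseteq I$ of small size $s$ and a container $C = C(S) \supseteq I$ depending only on $S$. The algorithm maintains $(S_i, C_i)$ with $S_i \subseteq I \subseteq C_i$: at step $i$ it processes the vertex $v \in C_i$ of maximum weighted codegree in $\HH_n[C_i]$ and, upon detecting $v \in I$, appends $v$ to $S$ and removes from $C$ every vertex $u$ forced out of $I$ by independence (i.e., $\{u,v,w\} \in \HH_n$ for some $w$ already selected). A supersaturation analysis that crucially uses both $e(\HH_n) = \Theta(n^2)$ and $\Delta_2(\HH_n) = O(1)$ shows that the procedure can be tuned so that the container satisfies $e(\HH_n[C]) < \beta e(\HH_n)$ after $s$ iterations, with $s$ small enough to be absorbed in the final counting.

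Fix $\gamma = \gamma(\delta, \alpha) > 0$ sufficiently small, and let $\beta = \beta(\gamma) > 0$ be the constant supplied by $(\alpha, \B)$-stability. Since $e(\HH_n[C]) < \beta e(\HH_n)$ at termination, stability forces one of two cases: \textbf{(a)} $|C \setminus B| \le \gamma n$ for some $B \in \B_n$, or \textbf{(b)} $|C| < (\alpha - \beta)n$. The total number of possible fingerprints is at most $2^{O(s \log n)}$, which, after taking the constant $C$ in the hypothesis $m \ge C\sqrt{n}$ large enough in terms of $\HH$ and $\delta$, is at most $2^{\eta m}$ for any prescribed $\eta > 0$.

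In case \textbf{(b)}, the number of $m$-subsets of $C$ is at most $\binom{|C|}{m} \le (1 - \beta/\alpha)^m \binom{\|\B_n\|}{m}$, using $\|\B_n\| \ge \alpha n$; summing over fingerprints with $\eta$ chosen small enough yields the $2^{-\eps m}\binom{\|\B_n\|}{m}$ term. In case \textbf{(a)}, fixing the associated $B \in \B_n$, the number of $m$-sets $I$ with $S \subseteq I \subseteq C$ and $|I \setminus B| \ge \delta m$ is at most
\[
\binom{|C \setminus B|}{\delta m}\binom{|C|}{m - \delta m} \le \left(\frac{e\gamma n}{\delta m}\right)^{\delta m}\left(\frac{m}{\|\B_n\|-m}\right)^{\delta m}\binom{\|\B_n\|}{m},
\]
using $|C \setminus B| \le \gamma n$ and $|C| \le \|\B_n\| + \gamma n$. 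Taking $\gamma$ small enough in terms of $\delta$ and $\alpha$ (permissible since $\beta(\gamma)$ exists for every $\gamma > 0$) drives the prefactors below $\delta^m$, and summing over the $\le |\B_n|$ choices of $B$ and the fingerprints $S$ produces the contribution $\delta^m|\B_n|\binom{\|\B_n\|}{m}$.

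The principal obstacle is the algorithmic piece: constructing a fingerprint whose size $s$ is small enough for $2^{O(s\log n)}$ to be absorbed, while simultaneously guaranteeing $e(\HH_n[C]) < \beta e(\HH_n)$ at termination. The supersaturation argument requires both $e(\HH_n) = \Theta(n^2)$ (enough edges to eliminate) and $\Delta_2(\HH_n) = O(1)$ (no pair dominating the edge count), in order to show that each addition to $S$ removes a fixed fraction of the edges remaining in $\HH_n[C]$. This is precisely the content of [ABMS, Theorem 4.1], which we would apply directly rather than reprove.
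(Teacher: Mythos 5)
The paper does not prove this statement itself: it is quoted verbatim as Theorem~4.1 of~\cite{ABMS} and used as a black box. Your proposal correctly identifies the container-method strategy behind it (fingerprint/container construction, a supersaturation step using $e(\HH_n)=\Theta(n^2)$ and $\Delta_2(\HH_n)=O(1)$, the stability dichotomy for the container, and the two-case count), and since you ultimately defer the algorithmic core to the same citation, your approach coincides with the paper's.
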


In the next subsection, we shall use this theorem, together with a result of Green~\cite{G04}, to deduce an approximate version of Theorem~\ref{CEstruc}. 

\subsection{Green's stability theorem}

Let $\HH = (\HH_n)_{n \in \N}$ be the sequence of hypergraphs which encodes Schur triples in $[n]$; that is, $V(\HH_n) = [n]$ and $\{x,y,z\} \in E(\HH_n)$ whenever $x + y = z$. The following stability result, due to Green~\cite{G04}, implies that $\HH$ is $(\alpha,\B)$-stable, where $\alpha = 1/2$ and
\begin{equation}
  \label{eq:CE-Bn-def}
  \B_n \, = \, \Big\{ \big\{ a+1,\ldots,a+ n/2 \big\} \,\colon\, 0 \le a \le n/2 \Big\} \cup \big\{ O_n \big\},
\end{equation}
where, as before, $O_n$ denotes the odd numbers in $[n]$.

\begin{prop}[{\cite[Proposition~7]{G04}}]\label{prop:Green}
For any $\gamma > 0$, if $\beta = \beta(\gamma) > 0$ is sufficiently small, then the following holds. If $A \subset [n]$ with $|A| \ge (1/2 - \beta)n$, then either $A$ contains at least $\beta n^2$ Schur triples, or $|A \setminus B| \le \gamma n$ for some $B \in \B_n$. 
\end{prop}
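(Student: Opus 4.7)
The plan is to obtain this stability result by combining a removal-type lemma for Schur triples with a dense structural theorem for sum-free sets of near-maximal size.

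First I would invoke a removal lemma for Schur triples: for every $\varepsilon > 0$ there is $\delta > 0$ such that if $A \subset [n]$ contains at most $\delta n^2$ Schur triples, then some $D \subset A$ with $|D| \le \varepsilon n$ has the property that $A' := A \setminus D$ is sum-free. This is by now standard and may be deduced from Szemer\'edi's regularity lemma applied to an auxiliary tripartite graph whose triangles encode Schur triples, in the spirit of the Ruzsa--Szemer\'edi argument for $3$-term APs. Combined with the hypothesis $|A| \ge (1/2 - \beta)n$, this produces a sum-free set $A'$ of size at least $(1/2 - \beta - \varepsilon)n$.

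Next I would apply the classification of sum-free subsets of $[n]$ of density close to $1/2$, proved by Lev, {\L}uczak and Schoen~\cite{LLS}: for every $\gamma' > 0$ there is $\beta' > 0$ such that if $A' \subset [n]$ is sum-free with $|A'| \ge (1/2 - \beta')n$, then $|A' \setminus B| \le \gamma' n$ for some $B \in \{O_n,\, \{n/2 + 1, \ldots, n\}\} \subseteq \B_n$. Fixing $\gamma$, setting $\varepsilon = \gamma'/1 = \gamma/2$, and choosing $\beta$ small enough so that $\beta \le \delta$ and $\beta + \varepsilon \le \beta'$, the set $A'$ satisfies the hypothesis of the dense stability theorem, and we obtain $B \in \B_n$ with $|A' \setminus B| \le \gamma n/2$. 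Combining the two steps,
\[
  |A \setminus B| \,\le\, |D| + |A' \setminus B| \,\le\, \varepsilon n + \gamma' n \,=\, \gamma n,
\]
as required.

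The main obstacle is really the dense stability statement of Lev--{\L}uczak--Schoen, whose proof uses Freiman-type additive combinatorics to pin down the two extremal configurations $O_n$ and $\{n/2+1, \ldots, n\}$. Taking that as a black box, the removal lemma reduction and the matching of the parameters $\gamma \mapsto (\gamma', \varepsilon) \mapsto (\beta', \delta) \mapsto \beta$ are routine.
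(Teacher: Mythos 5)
The paper does not prove this proposition at all: it quotes it verbatim as Proposition~7 of Green~\cite{G04}, noting in the introduction that it follows from the Lev--\L uczak--Schoen classification of large sum-free sets, and your reconstruction (a removal lemma for Schur triples followed by the dense classification, with routine parameter matching) is precisely the argument behind that citation. So the proposal is correct and takes essentially the same route as the source the paper relies on.
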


Using Theorem~\ref{algprop} and Proposition~\ref{prop:Green}, we easily obtain the following corollary.

\begin{prop}\label{CEprop}
For every $\delta > 0$, there exist constants $C > 0$ and $\eps > 0$ such that the following holds for every $n \in \N$ and $m \ge C\sqrt{n}$. There are at most 
$$2^{-\eps m} {n/2 \choose m}$$ 
sum-free subsets $I \subset [n]$ of size $m$ such that $| I \setminus B | > \delta m$ for every $B \in \B_n$.

In particular, for almost every sum-free set $I \subset [n]$ of size $m$, either $| I \setminus O_n | \le \delta m$, or $| I \setminus B | \le \delta m$ for some interval $B$ of length $n/2$.
\end{prop}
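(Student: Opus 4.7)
The plan is a direct application of Theorem~\ref{algprop} to the sequence of hypergraphs $\HH = (\HH_n)$ encoding Schur triples, together with the family $\B_n$ from~\eqref{eq:CE-Bn-def} and $\alpha = 1/2$. Proposition~\ref{prop:Green} is essentially tailor-made to supply the required stability, so the work is really a matter of verifying hypotheses and absorbing error terms.

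First I would check the three structural assumptions on $\HH_n$. The number of Schur triples is $\Theta(n^2)$, since for each $z \in [n]$ there are $\lfloor (z-1)/2 \rfloor$ unordered pairs $\{x,y\}$ with $x+y=z$. For any pair $\{x,y\} \subset [n]$, the only triples of $\HH_n$ containing it correspond to $z \in \{x+y, |x-y|\}$, so $\Delta_2(\HH_n) \le 2$. The family $\B_n$ consists of $n/2+2$ sets, each of size $\lceil n/2\rceil = \alpha n$, so $\|\B_n\| = \alpha n$ and $|\B_n| = O(n)$. Finally, $(1/2,\B)$-stability is immediate from Proposition~\ref{prop:Green}: given $\gamma > 0$, choose $\beta_0 = \beta(\gamma)$ as in that proposition; then for $A \subset [n]$ with $|A| \ge (1/2 - \beta_0)n$ and $|A \setminus B| > \gamma n$ for every $B \in \B_n$, the set $A$ contains at least $\beta_0 n^2 \ge \beta' e(\HH_n)$ Schur triples, where $\beta' = \beta_0 / c$ and $c$ is the implicit constant in $e(\HH_n) \le c n^2$. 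Rescaling $\beta \mapsto \min(\beta_0, \beta')$ verifies the stability definition.

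Applying Theorem~\ref{algprop} with these parameters yields, for all $m \ge C_0 \sqrt{n}$ and some $\eps_0 = \eps_0(\delta) > 0$,
$$\bigl| \SF^{(\delta)}_{\ge}(\HH_n,\B_n,m) \bigr| \,\le\, \bigl( 2^{-\eps_0 m} + \delta^m |\B_n| \bigr) \binom{\lceil n/2 \rceil}{m}.$$
Since $|\B_n| \le n$, the second term satisfies $\delta^m |\B_n| \le 2^{-\eps_0 m}$ as soon as $m \log_2(1/\delta) - \log_2 n \ge \eps_0 m$, which holds whenever $m \ge C \sqrt{n}$ for $C = C(\delta)$ large enough and $n$ sufficiently large. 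Combining the two terms and halving $\eps_0$ produces a bound of the form $2^{-\eps m} \binom{n/2}{m}$, which is the first assertion.

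For the ``in particular'' clause I would use the trivial lower bound that every $m$-subset of $O_n$ is sum-free, so the number of sum-free $m$-sets in $[n]$ is at least $\binom{\lceil n/2\rceil}{m}$; the exceptional family thus constitutes at most a $2^{-\eps m}$ fraction of all sum-free $m$-sets, which tends to $0$ as $m \to \infty$. I do not anticipate a genuine obstacle here, since the heavy lifting is entirely encapsulated in Theorem~\ref{algprop} and Proposition~\ref{prop:Green}; the only subtlety is the constant-juggling in the stability verification (matching Green's $\beta n^2$ triples to the $\beta\, e(\HH_n)$ required by the definition) and the elementary check that $\delta^m n = o(2^{-\eps m})$ under the hypothesis $m \ge C\sqrt{n}$.
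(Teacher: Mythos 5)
Your proposal is correct and follows essentially the same route as the paper: verify that the Schur-triple hypergraph with the family $\B_n$ of \eqref{eq:CE-Bn-def} satisfies the hypotheses of Theorem~\ref{algprop} via Proposition~\ref{prop:Green}, apply that theorem, and compare against the trivial lower bound $\binom{n/2}{m}$ on the number of sum-free $m$-sets. Your explicit absorption of the $\delta^m|\B_n|$ term (which the paper suppresses without comment) and the constant-matching in the stability verification are exactly the right bookkeeping.
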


\begin{proof}
Let $\HH = (\HH_n)_{n \in \N}$ be the sequence of hypergraphs which encodes Schur triples in $[n]$, as above, and let $\B = (\B_n)_{n \in \N}$ be the collection of intervals of length $n/2$, plus the odds, as in~\eqref{eq:CE-Bn-def}. Set $\alpha = 1/2$, and observe that $\HH$ is $(\alpha,\B)$-stable, by Proposition~\ref{prop:Green}. 

Now, by Theorem~\ref{algprop}, if $C = C(\delta) > 0$ is sufficiently large, and $m \ge C\sqrt{n}$, then 
$$\big| \SF^{(\delta)}_{\ge}(\HH_n,\B_n,m) \big| \,\le \, 2^{-\eps m} {\|\B_n\| \choose m} \, = \, 2^{-\eps m} {n/2 \choose m}$$
for some $\eps = \eps(\delta) > 0$. Since there are at least ${n/2 \choose m}$ sum-free $m$-subsets of $[n]$, it follows that for almost every such set $I$ we have $|I \setminus B| \le \delta n$ for some $B \in \B_n$, as required.
\end{proof}

We remark that it will be relatively straightforward to count the sets $I$ that contain fewer than $\delta m$ even elements, using Janson's inequality (see Section~\ref{JansonSec}), and those that contain more than $\delta m$ elements less than $n/2$, using induction on $n$ (see Section~\ref{CESec}). Thus, Proposition~\ref{CEprop} essentially reduces the problem of counting sum-free $m$-sets in $[n]$ to counting the sum-free sets that are almost contained in the interval $\{n/2+1,\ldots,n\}$.

\subsection{Binomial coefficient inequalities}

We shall make frequent use of some simple inequalities involving binomial coefficients; for convenience, we collect them here. Note first that ${a \choose b} \le \big( \frac{ea}{b} \big)^b$ and that ${a \choose b}$ is increasing in $a$. Next, observe that if $a > b > c \ge 0$, then
\begin{equation}\label{trivial}
{a \choose b - c} \, \le \, \left( \frac{b}{a-b} \right)^c {a \choose b} \qquad \text{and} \qquad {a - c \choose b} \, \le \, \left( \frac{a - c}{a} \right)^b {a \choose b},
\end{equation}
and hence
\begin{equation}\label{trivial2}
{a - c \choose b - d} \, \le \, \left( \frac{a - c}{a} \right)^{b-d} \left( \frac{b}{a-b} \right)^d {a \choose b}.
\end{equation}
We shall also use several times the observation that
\begin{equation}\label{eq:kmax}
\sum_{k=1}^{\infty} k^a e^{-bk} \, \le \, c \cdot \frac{\Gamma(a+1)}{b^{a+1}} \, \le \, C \left( \frac{a}{b} \right)^{a+1} e^{-(a+1)}, 
\end{equation}
where $\Gamma(\cdot)$ is Euler's Gamma function, for some $C > c > 0$ and every $a \ge 1$ and $b > 0$.

For other standard probabilistic bounds, such as the FKG inequality and Chernoff's inequality, we refer the reader to~\cite{AS}.

\section{A lower bound on the number of sum-free sets}\label{lowerSec}

In this section we shall prove the following simple proposition, which shows that the bound in Theorem~\ref{CEthm} is tight. 

\begin{prop}\label{CElower}
If $m \ge \sqrt{n}$, then there are $2^{\Omega(n/m)} {n/2 \choose m}$ sum-free subsets of $[n]$ of size $m$.
\end{prop}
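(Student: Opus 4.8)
The plan is to exhibit many sum-free $m$-subsets of $[n]$ by perturbing the canonical sum-free set $\{n/2+1,\dots,n\}$ in a controlled way. The key observation is that if we take a set $I$ consisting of most of its elements from the "top half" interval $B = \{n/2+1,\dots,n\}$, together with a few small elements from roughly $(n/4,n/2]$, then $I$ remains sum-free provided no two chosen small elements sum to a third chosen element, and no small element plus a top element lands inside $I$ — but since two elements of $B$ already sum to more than $n$, and a small element $a \in (n/4,n/2]$ plus a top element exceeds $n$ as well, the only danger is a Schur triple with all three members small, or two small members summing into $B$. Both of these can be ruled out by choosing the small elements from a short interval near $n/2$: if $a,b \in (n/2 - t, n/2]$ then $a + b \in (n - 2t, n]$, which we can avoid hitting by simply deleting the top $2t$ elements from the part of $I$ inside $B$.

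Concretely, I would fix $t = \lfloor cn/m \rfloor$ for a small absolute constant $c > 0$, let $J = (n/2 - t, n/2]$ (an interval of $t$ integers, all at most $n/2$), and let $B' = \{n/2 + 2t + 1, \dots, n\}$. For each subset $S \subseteq J$ with $|S| = s$ where $s$ ranges over, say, $0 \le s \le t/2$, and each $(m-s)$-subset $R$ of $B'$, set $I = S \cup R$. Such an $I$ is sum-free: any Schur triple $x + y = z$ in $I$ has $z \le n$, so $x, y$ cannot both lie in $B'$ (their sum exceeds $n$); if $x \in S$ and $y \in B'$ then $x + y > n/4 + n/2 > n$ is impossible only if... — here one must be slightly careful, so instead I take $J = (n/4 - t, n/4]$ or simply note $x + y > (n/2 - t) + (n/2 + 2t) = n + t > n$ when $x \in S, y \in B'$, ruling that out; and if $x, y \in S$ then $x + y > n - 2t$, so $z > n - 2t$, but $z \in S \cup B'$ forces $z \le n/2$ or $z > n/2 + 2t$, and $z \le n$, so $z \in B'$ with $z > n - 2t$ — this is again impossible since $B'$'s elements exceeding $n - 2t$ do not arise as such a sum once we also require... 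The cleanest fix: put the small elements in $(n/2 - t, n/2)$ and remove from $B$ everything in $(n - 2t, n]$ AND everything in $(n/2, n/2 + 2t]$ is already excluded; then $x,y \in S \Rightarrow x+y \in (n-2t, n)$, which misses $B' = (n/2+2t, n-2t]$ entirely. So $I$ is genuinely sum-free.

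It then remains to count and to handle overcounting. The number of pairs $(S,R)$ is $\sum_{s=0}^{t/2} \binom{t}{s}\binom{|B'|}{m-s} \ge \binom{|B'|}{m}\sum_{s=0}^{t/2}\binom{t}{s}(\text{correction})$; since $|B'| = n/2 - 4t \ge n/2 - 4cn/m$ and $s \le t/2 \le cn/(2m) \le m/2$ (using $m \ge \sqrt n$, so $n/m \le m$), the ratio $\binom{|B'|}{m-s}/\binom{|B'|}{m}$ is at least $(m/n)^{O(s)} \ge 2^{-O(s \log(n/m)/1)}$... — to keep the bound clean I would only sum over $s \le c'n/(m\log(n/m))$ or, better, observe that it suffices to take $s \in \{0,1\}$ is too weak, so instead I sum $s$ up to $\Theta(t)$ and absorb the binomial-coefficient loss: each term with $s \le t$ contributes at least $\binom{t}{s}\binom{n/2 - O(n/m)}{m}\cdot(\text{small})$, and choosing $c$ small enough that the $(m/(n-m))^{O(t)}$ loss from \eqref{trivial} and the $((n/2 - O(n/m))/(n/2))^m$ loss from the second part of \eqref{trivial} are each at least $2^{-n/(Km)}$ for a large constant $K$. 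Distinct pairs $(S,R)$ give distinct sets $I$ since $S = I \cap (n/2-t,n/2)$ is determined by $I$, so there is no overcounting at all. Summing $\sum_{s} \binom{t}{s}$ over $0 \le s \le t$ would give $2^t = 2^{\Omega(n/m)}$, but to stay safely in range I restrict to $0 \le s \le t/2$, still giving $\sum_s \binom{t}{s} \ge 2^{t-1} = 2^{\Omega(n/m)}$. Putting the pieces together yields at least $2^{\Omega(n/m)}\binom{n/2}{m}$ sum-free $m$-sets.

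The main obstacle is purely bookkeeping: choosing the constant $c$ small enough that both applications of inequality \eqref{trivial} — one for shifting from $\binom{n/2}{m}$ down to $\binom{n/2 - O(n/m)}{m}$, and one for shifting from $\binom{|B'|}{m}$ to $\binom{|B'|}{m-s}$ — cost at most $2^{-n/(Km)}$ apiece, so that the surviving factor $2^{t/2}/2^{2n/(Km)}$ is still $2^{\Omega(n/m)}$. This works precisely because $t = \Theta(n/m)$ and the losses are also of the form $2^{\Theta(n/m)}$ with a constant we control via $c$; the hypothesis $m \ge \sqrt n$ enters only to guarantee $n/m \le m$, so that $s \le t$ never exceeds $m$ and the binomial manipulations are valid.
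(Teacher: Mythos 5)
Your construction is sum-free and injective, but the counting at the end does not work, and in fact cannot work as set up. The ground set you draw from is $J \cup B'$ with $|J| = t$ and $|B'| \le n/2 - 4t$, so it has fewer than $n/2$ elements; consequently the \emph{total} number of $m$-subsets of $J \cup B'$ (sum-free or not) is at most ${n/2 - 3t \choose m} \le {n/2 \choose m}$, and no choice of the constant $c$ can produce $2^{\Omega(n/m)}{n/2 \choose m}$ sets. The error in your accounting is the step where you treat $\sum_s {t \choose s}$ as a gain of $2^{t}$: each term is multiplied by ${|B'| \choose m-s}$, and the ratio ${|B'| \choose m-s}/{|B'| \choose m}$ is about $\big(\frac{m}{n/2-m}\big)^s$, i.e.\ a loss of $(\Theta(m/n))^{s}$ per term. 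Since $t = cn/m$, the product satisfies ${t \choose s}\big(\tfrac{2m}{n}\big)^s \le \big(\tfrac{2ec}{s}\big)^s = O(1)$, so the whole sum gains only a constant factor over ${n/2}\choose{m}$, not $2^{\Omega(n/m)}$. This loss is \emph{not} of the form $2^{-O(n/m)}$ with a constant you control via $c$; for $s = \Theta(t)$ it is $2^{-\Theta((n/m)\log(n/m))}$, which swamps ${t \choose s} \le 2^t$.

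The underlying obstruction is that any construction which guarantees sum-freeness by pure interval-avoidance must delete at least $2a$ elements from the top of $\{n/2+1,\dots,n\}$ in order to admit elements from $(n/2-a,\,n/2]$, so the ground set always shrinks. The paper's proof avoids this by taking the full interval $U = \{n/2 - a,\ldots,n\}$ with the much larger extension $a = cn^2/m^2$ and arguing \emph{probabilistically}: $U$ contains only $O(a^2)$ Schur triples, all supported near $n/2$, so by the FKG inequality a $p$-random subset ($p = 4m/n$) avoids them all with probability $e^{-O(a^2p^3)} = e^{-O(c^2 n/m)}$. The gain ${n/2+a \choose m}/{n/2 \choose m} \ge e^{am/n} = e^{cn/m}$ is linear in $c$ while the loss is quadratic in $c$, so taking $c$ small wins. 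To repair your argument you would need to enlarge $J$ to length $\Theta(n^2/m^2)$ and then handle the Schur triples that this necessarily introduces into the ground set (which is essentially the paper's FKG step), rather than excising them deterministically.
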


\begin{proof}
Let $c > 0$ be a sufficiently small absolute constant and set $a = c n^2 / m^2$. We claim that if $S$ is a uniformly chosen random $m$-subset of $U = \{n/2 - a, \ldots, n\}$, then 
\begin{equation}\label{FKGeq}
\Pr\big( S\text{ is sum-free} \big) \, \ge \, \exp\left( - \frac{cn}{2m} \right).
\end{equation}
In order to prove~\eqref{FKGeq}, we shall in fact choose the elements of $S$ independently at random with probability $p = 4m/n$, and bound $\Pr_p\big( S\text{ is sum-free} \,|\, |S| = m \big)$, which is clearly equivalent. (Note that the proposition is trivial if $m = \Omega(n)$, so we may assume that $p$ is sufficiently small.)

First observe that there are at most $a^2+a$ triples $\{x,y,z\}$ in $U$ with $x + y = z$, and at most $a+1$ pairs $\{x,y\}$ in $U$ with $2x = y$. Thus, by the FKG inequality, 
$$\Pr_p\big( S \text{ is sum-free} \big) \,\ge\, \big(1 - p^3 \big)^{a^2+a}\big(1 - p^2 \big)^{a+1} \,\ge\, \exp\left( - \frac{cn}{3m} \right)$$ 
since $c > 0$ is sufficiently small, by our choices of $a$ and $p$. Next, note that, by Chernoff's inequality,
$$\Pr_p\big( |S| < m \big) \, \le \, e^{-cm} \,\le\, e^{-cn/m},$$ 
since $m \ge \sqrt{n}$. Finally, observe that $g(t) = \Pr_p\big( S\text{ is sum-free} \,|\, |S| = t \big)$ is decreasing in $t$.

It follows immediately that 
$$\Pr_p\big( S\text{ is sum-free} \,|\, |S| = m \big) \, \ge \, \exp \left( - \frac{cn}{2m} \right),$$
which proves~\eqref{FKGeq}. Hence the number of sum-free $m$-sets in $\{n/2 - a, \ldots, n\}$ is at least
$${ n/2 + a \choose m }\exp \bigg( - \frac{cn}{2m} \bigg) \,\ge\, {n/2 \choose m} \exp\bigg( \frac{am}{n} - \frac{cn}{2m} \bigg) \,=\, \exp \bigg( \frac{cn}{2m} \bigg) {n/2 \choose m},$$
where the inequality follows from~\eqref{trivial} and the fact that $\big(1+\frac{2a}{n}\big) \ge e^{a/n}$.
\end{proof}

\section{Janson argument}\label{JansonSec}

In this section, we shall count the sum-free sets that have few even elements. Recall that $O_n$ denotes the odd numbers in $[n]$. 

\begin{prop}\label{prop:Janson2}
If $\delta > 0$ is sufficiently small, then there are at most $2^{O(n/m)} {n/2 \choose m}$ sum-free subsets $I \subset [n]$ with $|I| = m$ and $|I \setminus O_n| \le \delta m$, for every $m,n \in \N$.
\end{prop}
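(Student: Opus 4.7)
I would first dispose of the trivial range $m \le \sqrt{n}$, where $\binom{n}{m} \le 2^m \binom{n/2}{m} \le 2^{n/m}\binom{n/2}{m}$ already bounds the total number of $m$-subsets of $[n]$; the range $m > n/4$ can be handled separately by elementary means, as there every $I$ with $|I \setminus O_n| \le \delta m$ is forced to be very close to $O_n$. So assume $\sqrt{n} \le m \le n/4$, and parameterize the count by $k := |I \setminus O_n| \in [0,\delta m]$. For each fixed even set $E$ with $|E| = k$, denote by $N_E$ the number of $(m-k)$-subsets $S \subset O_n$ such that $S \cup E$ is sum-free, so the total count is at most $\sum_k \binom{n/2}{k}\max_E N_E$. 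Since the sum of two odd numbers is even, there are no Schur triples inside $O_n$, and triples with two elements in $E$ are already determined by $E$; thus $S \cup E$ is sum-free iff $E$ itself is sum-free and $S$ avoids every \emph{bad pair} $\{o_1,o_2\} \subset O_n$ with $o_1 + o_2 \in E$ or $|o_1 - o_2| \in E$.

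To bound $N_E$, I would apply Janson's inequality to the $p$-random subset $S_p$ of $O_n$ with $p = 2(m-k)/n$, so that $\Ex|S_p| = m-k$. For each $e \in E$ there are $\Theta(n)$ bad pairs (about $e/4$ summing to $e$ and $(n-e)/2$ differing by $e$), yielding $\Theta(kn)$ bad pairs in total and Janson's parameter $\mu = \Theta(knp^2) = \Theta(km^2/n)$. Each $o \in O_n$ belongs to at most $O(k)$ bad pairs, so the dependency sum is $\Delta = O(nk^2p^3) = O(k^2m^3/n^2)$. Janson's inequality then gives
$$\Pr\bigl(S_p \cup E \text{ is sum-free}\bigr) \,\le\, \exp\!\bigl(-c\min\{\mu,\, \mu^2/\Delta\}\bigr) \,=\, \exp\!\bigl(-c\min\{km^2/n,\, m\}\bigr),$$
and converting this to a count via the standard local estimate $\Pr(|S_p| = m-k) = \Omega(m^{-1/2})$ yields
$$N_E \,\le\, O(\sqrt{m})\binom{n/2}{m-k}\exp\!\bigl(-c\min\{km^2/n,\, m\}\bigr).$$

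Summing over $E$ and invoking the elementary estimate $\binom{n/2}{k}\binom{n/2}{m-k} \le \binom{n/2}{m}(2em/k)^k$ (valid for $m \le n/4$) reduces the problem to establishing
$$\sum_{k=1}^{\delta m}\Bigl(\frac{2em}{k}\Bigr)^k \exp\!\bigl(-c\min\{km^2/n,\, m\}\bigr) \,\le\, 2^{O(n/m)}.$$
In the sparse regime $k \le n/m$, the logarithm of the $k$-th term is $k\ln(2em/k) - ckm^2/n$, which calculus shows is maximized at $k^\ast = 2m e^{-cm^2/n}$ with value $k^\ast$; the elementary bound $t e^{-t} \le e^{-1}$ applied with $t = cm^2/n$ gives $k^\ast = O(n/m)$, so every term (and hence the partial sum of at most $n/m$ terms) is $2^{O(n/m)}$. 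In the dense regime $n/m < k \le \delta m$ (which is nonempty only when $m \ge \sqrt{n/\delta}$, where $m$ is in particular large enough for $e^{-\Omega(m)}$ to dominate polynomial factors), the exponent $k\ln(2em/k) - cm$ is at most $\delta m\ln(2e/\delta) - cm$, which is negative once $\delta$ is sufficiently small. The main obstacle is the careful computation of the Janson parameters $\mu$ and $\Delta$ and managing the transition at $k \sim n/m$, where $\Delta$ overtakes $\mu$ and Janson degrades from $\exp(-\Omega(\mu))$ to the weaker $\exp(-\Omega(m))$.
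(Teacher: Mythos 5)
Your core argument is exactly the paper's proof of this proposition: fix the even part $E$ with $|E|=k$, observe that a sum-free $I$ with $I\setminus O_n=E$ forces the odd part to avoid the $\Theta(kn)$ pairs $\{o_1,o_2\}\subset O_n$ with $o_1+o_2\in E$ or $o_1-o_2\in E$, apply the hypergeometric form of Janson with $\mu=\Theta(km^2/n)$ and $\Delta=O(k^2m^3/n^2)$ (hence $\mu^2/\Delta=\Omega(m)$), convert with $\binom{n/2}{k}\binom{n/2}{m-k}\le (2em/k)^k\binom{n/2}{m}$, and split the sum over $k$ at $k\sim n/m$, optimizing via $te^{-t}\le e^{-1}$ in the sparse regime and using $(2e/\delta)^{\delta m}e^{-cm}<1$ in the dense regime. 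All of this is correct and matches the paper line for line; the paper only needs to run this computation for $C\sqrt{n}\le m\le C\sqrt{n\log n}$, but your extension of the same computation up to $m\le n/4$ is also valid.

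The one genuine gap is the range $n/4<m\le n/2$, which you dismiss ``by elementary means, as there every $I$ \ldots is forced to be very close to $O_n$'' --- but that sentence merely restates the hypothesis $|I\setminus O_n|\le\delta m$ and does not bound the count. For $m$ bounded away from $n/2$ your own scheme still closes, since $\binom{n/2}{m-k}\le\big(\tfrac{m}{n/2-m}\big)^k\binom{n/2}{m}$ is only $2^{O(k)}\binom{n/2}{m}$ and is crushed by $e^{-cm}$ once $\delta$ is small (with $\delta$ now depending on how close $m$ may get to $n/2$). But for $m=n/2-t$ with $t=o(n)$ the factor $\big(\tfrac{m}{n/2-m}\big)^{k}$ can be as large as $(n/2t)^{\delta m}$, which overwhelms $e^{-cm}$ for every fixed $\delta>0$, so the term-by-term estimate fails and the statement is not a soft consequence of closeness to $O_n$. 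One genuinely needs extra structure there: e.g., if $e\in I$ is even then $I\cap O_n$ is independent in a union of paths of difference $e$, so $|I\cap O_n|\le n/4+e/4+O(1)$, which forces every even element of $I$ to lie within $O(t+k)$ of $n$ and collapses the count of admissible $E$ (this is the content of the paper's Claim~2 in Section~6, and of Proposition~\ref{prop:Janson}, which the paper simply invokes to cover all $m\ge C\sqrt{n\log n}$). Either cite Proposition~\ref{prop:Janson} for the top range, as the paper does, or supply this structural step; as written, that range is asserted rather than proved.
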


We remark that an argument similar to the one presented in this section was used in~\cite{ABMS} in a somewhat more general context, see also~\cite{BMS}. Indeed, the following result was proved in~\cite{ABMS}.

\begin{prop}[{\cite[Proposition~5.1]{ABMS}}]\label{prop:Janson}
There exists constants $\delta > 0$ and $C > 0$ such that the following holds for every $m \ge C\sqrt{n \log n}$. There are at most 
$$\left( 1 + \frac{1}{n^3} \right) {n/2 \choose m}$$
sum-free subsets $I \subset [n]$ with $|I| = m$ and $|I \setminus O_n| \le \delta m$.
\end{prop}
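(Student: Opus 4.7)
The approach is to stratify sum-free $m$-sets $I \subseteq [n]$ by $t := |I \setminus O_n|$, the number of even elements. For $t = 0$, every $m$-subset of $O_n$ is automatically sum-free (odd $+$ odd is even), contributing exactly $\binom{n/2}{m}$. Writing $\mathcal{F}_t$ for the family of sum-free $m$-sets with $|I \setminus O_n| = t$, the plan is to show $|\mathcal{F}_t| \le n^{-5}\binom{n/2}{m}$ for each $1 \le t \le \delta m$; summing yields at most $n^{-3}\binom{n/2}{m}$.

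Fix $t \ge 1$ and a sum-free $E \subseteq [n] \setminus O_n$ with $|E| = t$, and set $N(E) := \#\{O \subseteq O_n : |O| = m-t,\ E \cup O \text{ sum-free}\}$. Consider the Bernoulli model where each element of $O_n$ is included independently with probability $p = 2(m-t)/n$. By parity, the only Schur triples in $E \cup O$ have one element from $E$ and two from $O$, so the bad events are pairs $\{x,y\} \subseteq O$ with $x + y \in E$ (type-(b)) or $|y-x| \in E$ (type-(a)). For each $e \in E$ there are at least $n/4 - O(1)$ such pairs, and each pair is counted for at most two values of $e$, so
$$\mu := \sum \Pr_p(\{x,y\} \subseteq O) = \Omega\!\left(\frac{tm^2}{n}\right).$$
Each $x \in O_n$ lies in at most $3t$ bad pairs (at most three per $e \in E$), giving $\Delta = O(t^2m^3/n^2)$ for the correlation sum. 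The extended Janson inequality then yields
$$\Pr_p(E \cup O \text{ sum-free}) \le \exp\!\left(-\frac{\mu^2}{\mu+\Delta}\right) \le \exp\!\left(-c\min\bigl\{tm^2/n,\,m\bigr\}\right),$$
the first term in the min dominating when $\Delta \le \mu$ (i.e.\ $t \le n/m$). Passing to the uniform $(m-t)$-subset model via $\Pr_p(|O|=m-t) = \Omega(1/\sqrt{n})$ gives $N(E) \le O(\sqrt{n})\exp(-c\min\{tm^2/n,\,m\})\binom{n/2}{m-t}$.

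Summing over $E$ and using the elementary bound $\binom{n/2}{t}\binom{n/2}{m-t} \le K^t(m/t)^t\binom{n/2}{m}$ (for an absolute constant $K$; valid throughout $m \le n/2$, with a slightly larger factor when $m$ is close to $n/2$ that is easily absorbed below), we obtain
$$|\mathcal{F}_t| \le O(\sqrt{n})\left(\frac{Km}{t}\right)^t \exp\bigl(-c\min\{tm^2/n,\,m\}\bigr)\binom{n/2}{m}.$$
For $m \ge C\sqrt{n\log n}$ with $C$ sufficiently large in terms of $c$, we have $cm^2/n \ge 10\log(Km)$. In the regime $t \le n/m$, the exponent $ctm^2/n$ exceeds $t\log(Km/t)$ by at least $9t\log n$, so each summand is at most $n^{-9t+o(1)}\binom{n/2}{m}$ and the geometric sum over $t$ is negligible. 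In the regime $t > n/m$, choosing $\delta$ small enough that $\delta\log(K/\delta) < c/2$ gives $(Km/t)^t \le (K/\delta)^{\delta m} \le e^{cm/2}$, so each summand is at most $O(\sqrt{n})e^{-cm/2}\binom{n/2}{m} \ll n^{-10}\binom{n/2}{m}$. Combining yields $\sum_{t \ge 1}|\mathcal{F}_t| \le n^{-3}\binom{n/2}{m}$.

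The main obstacle is the careful Janson computation, especially controlling the correlation sum $\Delta$ uniformly across all configurations $E$ of even elements (a single-$e$ Janson application would be too weak) and verifying that the two-regime bound glues correctly at the transition $t \asymp n/m$. A secondary concern is the binomial-ratio estimate when $m$ is close to $n/2$, where $\binom{n/2}{t}\binom{n/2}{m-t}/\binom{n/2}{m}$ inflates beyond $(Km/t)^t$; in that regime, however, $m^2/n$ is linear in $n$, so the Janson factor $e^{-\Omega(m)}$ comfortably absorbs any such polynomial inflation.
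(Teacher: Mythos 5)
Your proof is correct and takes essentially the same approach as the paper: the paper imports this statement from \cite[Proposition~5.1]{ABMS}, but its Section~4 argument for Proposition~\ref{prop:Janson2} is precisely your computation --- fix the even part, apply the (hypergeometric) Janson inequality to the pairs of odd numbers whose sum or difference lands in it, and sum over the two regimes $t \le n/m$ and $t > n/m$ --- with your hypothesis $m \ge C\sqrt{n\log n}$ upgrading the conclusion to the $1 + n^{-3}$ bound. The only blemish is the phrase ``polynomial inflation'' for $m$ near $n/2$: the ratio $\binom{n/2}{t}\binom{n/2}{m-t}/\binom{n/2}{m}$ can be as large as $e^{\Theta(\delta\log(1/\delta)\,m)}$ at $t = \delta m$, but, exactly as in your small-$\delta$ choice for the regime $t > n/m$, it is still absorbed by the $e^{-\Omega(m)}$ Janson factor, so the argument stands.
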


Proposition~\ref{prop:Janson} clearly implies Proposition~\ref{prop:Janson2} in the case $m \ge C\sqrt{n \log n}$. Furthermore, the proposition is trivial if $m \le O\big( \sqrt{n} \big)$, since then the claimed upper bound is greater than $\binom{n}{m}$. Thus, we need only consider the case $C\sqrt{n} \le m \le C\sqrt{n \log n}$. 

Recall the following well-known result, which is an easy corollary of Janson's inequality (see~\cite{AS,JLR}), combined with Pittel's inequality (see~\cite{JLR}). We refer the reader to~\cite[Section~5]{ABMS} for a proof.

\begin{lemma}[Hypergeometric Janson Inequality]\label{HJI}
Suppose that $\{U_i\}_{i \in J}$ is a family of subsets of an $n$-element set $X$ and let $m \in \{0, \ldots, n\}$. Let
$$\mu = \sum_{i \in J} (m/n)^{|U_i|} \quad \text{and} \quad \Delta = \sum_{i \sim j} (m/n)^{|U_i \cup U_j|},$$
where the second sum is over ordered pairs $(i,j)$ such that $i \neq j$ and $U_i \cap U_j \neq \emptyset$. Let $R$ be a uniformly chosen random $m$-subset of $X$. Then
$$\Pr\big( U_i \nsubseteq R \text{ for all $i \in J$} \big) \,\le\, C \cdot \max\left\{e^{-\mu/2}, e^{-\mu^2/(2\Delta)}\right\},$$
for some absolute constant $C > 0$.
\end{lemma}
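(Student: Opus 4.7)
The plan is to compare the uniform random $m$-subset $R$ with the binomial random subset $R_p \subseteq X$ in which each element appears independently with probability $p = m/n$, so that $\Ex|R_p| = m$. Janson's inequality is naturally stated in the independent (binomial) model, so I would first apply it to $R_p$ and then transfer the conclusion back to $R$ by exploiting the monotonicity of the event
$$\A(S) \, = \, \bigl\{ U_i \nsubseteq S \text{ for every } i \in J \bigr\},$$
which is decreasing in $S$: adding elements to $S$ can only create new containments $U_i \subseteq S$, never destroy them.

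For the Janson step, I would introduce the indicator events $A_i = \{U_i \subseteq R_p\}$, for which $\Pr(A_i) = p^{|U_i|}$ and $\Pr(A_i \cap A_j) = p^{|U_i \cup U_j|}$ whenever $i \sim j$. Summed over $i \in J$ and over dependent ordered pairs, these are precisely the quantities $\mu$ and $\Delta$ appearing in the lemma. The standard form of Janson's inequality then delivers a bound of the shape $\Pr(\A(R_p)) \le \exp\bigl(-\mu^2/(2(\mu + \Delta))\bigr)$, and a short case split on whether $\Delta \le \mu$ or $\Delta \ge \mu$ (absorbing the resulting numerical discrepancies into the constant) yields
$$\Pr(\A(R_p)) \, \le \, C' \cdot \max\bigl\{ e^{-\mu/2}, \, e^{-\mu^2/(2\Delta)} \bigr\}$$
for some absolute $C' > 0$.

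For the Pittel step, I would condition on $|R_p|$:
$$\Pr(\A(R_p)) \, = \, \sum_{k=0}^{n} \Pr\bigl(|R_p| = k\bigr) \, \Pr(\A(R_k)),$$
where $R_k$ denotes a uniform $k$-subset of $X$. Because $\A$ is decreasing, $\Pr(\A(R_k))$ is nonincreasing in $k$, so replacing $\Pr(\A(R_k))$ by $\Pr(\A(R_m)) = \Pr(\A(R))$ in the range $k \le m$ gives
$$\Pr(\A(R_p)) \, \ge \, \Pr\bigl(|R_p| \le m\bigr) \, \Pr(\A(R)).$$
Since $|R_p| \sim \Bin(n, m/n)$ has integer mean $m$, a classical fact about binomial medians shows $\Pr(|R_p| \le m) \ge 1/2$. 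Rearranging and combining with the bound from the previous paragraph yields the lemma with $C = 2C'$.

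The main obstacle is really just bookkeeping: the standard Janson inequality comes in several slightly different forms (depending on whether the correlation sum is over ordered or unordered pairs, and on which regime --- small $\Delta$ versus large $\Delta$ --- is emphasized), so aligning the constants with the stated right-hand side requires some care. The crucial qualitative point is that restricting attention to the monotone-decreasing version of the comparison (rather than the general Pittel inequality, which would lose a factor of $\sqrt{m}$) is exactly what allows $C$ in the lemma to be an \emph{absolute} constant.
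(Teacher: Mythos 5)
Your approach --- Janson's inequality applied to the binomial random set $R_p$ with $p = m/n$, followed by a transfer to the uniform $m$-subset using the monotone-decreasing nature of the event and the fact that $\Bin(n,m/n)$ with integer mean $m$ has median $m$ --- is exactly the route the paper intends: Lemma~\ref{HJI} is quoted there as a corollary of Janson's inequality combined with Pittel's inequality, with the details deferred to Section~5 of~\cite{ABMS}, and your conditioning argument is precisely the lossless (monotone) form of that comparison, giving $\Pr(\A(R)) \le 2\Pr(\A(R_p))$.

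One correction is needed in the Janson step. Starting from a bound of the form $\exp\bigl(-\mu^2/(2(\mu+\Delta))\bigr)$, your case split only yields the exponents $\mu/4$ (when $\Delta \le \mu$) and $\mu^2/(4\Delta)$ (when $\Delta \ge \mu$), and a loss in the \emph{exponent} cannot be absorbed into the multiplicative constant $C$, contrary to what you suggest. The fix is standard: quote instead the basic Janson inequality $\Pr(\A(R_p)) \le \exp(-\mu + \Delta/2)$ (with $\Delta$ summed over ordered pairs, as in the statement), which gives $e^{-\mu/2}$ in the case $\Delta \le \mu$, and the generalized Janson inequality $\Pr(\A(R_p)) \le \exp\bigl(-\mu^2/(2\Delta)\bigr)$ in the case $\Delta \ge \mu$. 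With that substitution the binomial bound is exactly $\max\bigl\{ e^{-\mu/2}, e^{-\mu^2/(2\Delta)} \bigr\}$ with no constant at all, and your Pittel step then proves the lemma with $C = 2$.
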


We now turn to the proof of Proposition~\ref{prop:Janson2}.\medskip

Let $C > 0$ be a sufficiently large constant, and recall that we may assume that $C \sqrt{n} \le m \le C\sqrt{n \log n}$. We begin by proving the following claim.

\begin{claim*}
For some constant $c > 0$, there are at most
\begin{equation}\label{eq:PJ2}
C \cdot {n/2 \choose k} \max\Big\{ e^{-c k m^2  / n}, e^{-c m} \Big\} {n/2 \choose {m-k}}
\end{equation}
sum-free $m$-sets $I \subset [n]$ with $|I \setminus O_n| = k \le \delta m$. 
\end{claim*}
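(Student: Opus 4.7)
The plan is to enumerate first the even part $E := I \setminus O_n$ of $I$, in ${n/2 \choose k}$ ways, and then to bound the number of sum-free completions by $m-k$ odd elements using the Hypergeometric Janson Inequality (Lemma~\ref{HJI}) applied to the ground set $X := O_n$ of odd integers in $[n]$ and a suitable family of forbidden 2-element subsets.

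To identify this family, observe that since odd$+$odd is even, odd$+$even is odd, and even$+$even is even, every Schur triple in $I$ that involves some element of $O := I \cap O_n$ has the form $\{o_1,o_2,e\}$ with $o_1, o_2 \in O$ and $e \in E$, where either $o_1 + o_2 = e$ or $|o_1 - o_2| = e$. (Schur triples contained entirely in $E$ are handled automatically, since we seek only an upper bound and may disregard sum-freeness within $E$.) For each $e \in E$, let
$$\F_e \,:=\, \Big\{ \{o_1,o_2\} \subset O_n \,\colon\, o_1 + o_2 = e \ \text{or}\ |o_1 - o_2| = e \Big\}.$$
A short count gives $n/4 \le |\F_e| \le n$ for every even $e \in [n]$: the ``sum'' pairs number roughly $e/4$, the ``difference'' pairs number roughly $(n-e)/2$, and the minimum of the total is attained near $e = n/2$. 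Each odd element belongs to at most three pairs of $\F_e$, and each pair of odds belongs to at most two of the $\F_e$'s (one via its sum, one via its difference). Hence, writing $\F := \bigcup_{e \in E} \F_e$, we have $nk/8 \le |\F| \le nk$, and each $o \in O_n$ lies in $O(k)$ pairs of $\F$. The set $I = E \cup O$ is sum-free only if $O$ contains no $U \in \F$.

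Now apply Lemma~\ref{HJI} with ground set $O_n$ of size $n/2$, random subset size $m-k$, and family $\F$; write $p := 2(m-k)/n$. Since every $U \in \F$ has $|U| = 2$,
$$\mu \,=\, |\F| \cdot p^2 \,=\, \Theta\!\left( \frac{k m^2}{n} \right),$$
using $k \le \delta m$. For $\Delta$, each $U \in \F$ shares an element with $O(k)$ other members of $\F$, so there are $O(nk^2)$ ordered intersecting pairs $(U_i, U_j)$, every union $U_i \cup U_j$ has size $3$, and
$$\Delta \,=\, O(nk^2) \cdot p^3 \,=\, O\!\left( \frac{k^2 m^3}{n^2} \right).$$
Hence $\mu^2/\Delta = \Omega(m)$, and Lemma~\ref{HJI} yields
$$\Pr\big( O \text{ contains no } U \in \F \big) \,\le\, C \max\!\Big\{ e^{-c k m^2 / n},\, e^{-c m} \Big\}$$
for some absolute $c > 0$. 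Multiplying by ${n/2 \choose m-k}$ (the number of $(m-k)$-subsets of $O_n$) bounds the number of valid $O$ for each $E$, and summing over $E$ multiplies by ${n/2 \choose k}$, giving~\eqref{eq:PJ2}.

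The main obstacle is the uniform lower bound $|\F_e| \ge n/4$: when $e$ is small the ``sum'' pairs in $\F_e$ are scarce while the ``difference'' pairs are plentiful, and vice versa when $e$ is close to $n$, so both extremes must be checked to get the correct linear-in-$n$ scaling uniformly in $e$ -- this is exactly what makes $\mu = \Omega(km^2/n)$. A secondary technical point is the double-counting of pairs across different $e$'s, but since each pair appears in at most two of the $\F_e$'s this costs only a factor of $2$ in the lower bound for $|\F|$, leaving the asymptotic estimate intact.
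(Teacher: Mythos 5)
Your proposal is correct and follows essentially the same route as the paper: fix the even part $S = I\setminus O_n$, apply the Hypergeometric Janson Inequality over the ground set $O_n$ to the family of pairs of odd numbers whose sum or difference lies in $S$, and verify $\mu = \Theta(km^2/n)$ and $\Delta = O(k^2m^3/n^2)$, so that $\mu^2/\Delta = \Omega(m)$. The only (harmless) quibble is that $|\F_e| \approx e/4 + (n-e)/2$ is minimized at $e$ near $n$, not near $n/2$; the uniform lower bound $|\F_e| = \Omega(n)$ still holds, so the argument is unaffected.
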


\begin{proof}[Proof of claim]
Let $k \le \delta m$ and let $S$ be an arbitrary $k$-subset of $[n] \setminus O_n$. Let $\{U_i\}_{i \in J}$ be the collection of pairs $\{x,y\} \subset O_n$ such that either $x + y = z$ or $x - y = z$ for some $z \in S$. In order to bound the number of sum-free $m$-sets $I$ with $I \setminus O_n = S$, we shall apply the Hypergeometric Janson Inequality to the collection $\{U_i\}_{i \in J}$ and the set $X = O_n$, with $R$ a uniformly chosen random $(m-k)$-subset of $X$. Note that if $S \cup R$ is sum-free, then $U_i \nsubseteq R$ for all $i \in J$. 

Let $\mu$ and $\Delta$ be the quantities defined in the statement of Lemma~\ref{HJI}, and observe that for every even number $z$, there are either at least $n/10$ pairs $\{x,y\} \subset O_n$ with $x + y = z$ (if $z \ge n/2$), or at least $n/5$ such pairs with $x - y = z$ (if $z \le n/2$). Thus $nk/20 \le |J| \le nk$, since each pair can be counted at most twice. Observe that each vertex $x \in O_n$ lies in at most $2k$ of the $U_i$. Hence
$$\mu \, \ge \, \frac{nk}{20} \cdot \frac{(m-k)^2}{n^2} \, \ge \, \frac{k m^2}{30n} \quad \text{and} \quad \Delta \,\le\, (2k)^2 \left( \frac{|J|}{2k} \right) \bigg( \frac{m}{n} \bigg)^3 \le \, \frac{2k^2m^3}{n^2}.$$
By the Hypergeometric Janson Inequality, if $c = 10^{-4}$ then there are at most
$$C \cdot \max\Big\{ e^{-c k m^2  / n}, e^{-c m} \Big\}  {n/2 \choose m-k}$$
sets $R \subset O_n$ of size $m - k$ such that $S \cup R$ is sum-free. Summing over choices of $S$, we obtain the claimed bound.
\end{proof}

Now, by~\eqref{trivial} and since $m \le C\sqrt{n \log n} \le n/6$, if $k \ge n/m$ then~\eqref{eq:PJ2} is at most
$$C \cdot {n/2 \choose k} \left(\frac{m}{n/2-m} \right)^k e^{-cm} {n/2 \choose m} \, \le \, C \cdot \left( \frac{3em}{2k} \right)^k e^{-cm} {n/2 \choose m} \, \ll \,  {n/2 \choose m},$$
assuming $\delta > 0$ is sufficiently small. However, if $k \le n/m$ then~\eqref{eq:PJ2} is at most
$$C \cdot \left( \frac{3em}{2k} e^{-cm^2/n} \right)^k {n/2 \choose m} \, \le \,  2^{O(n/m)} {n/2 \choose m}.$$
To see the final inequality, observe that (since $x e^{-x/n} \le n$) we have $m e^{-c m^2/n} \le n/cm$, and use the fact that $k \mapsto (a/k)^k$ is maximized when $k = a/e$. This completes the proof of Proposition~\ref{prop:Janson2}.

\section{Partitions and sumsets}\label{partSec}

In this section we shall prove Theorems~\ref{S+S} and~\ref{S+S2}. Recall that
$$p_\ell^*(k) \, = \, \#\big\{ \text{partitions of $k$ into $\ell$ distinct parts} \big\}.$$ 
We shall use the following easy upper bound on $p^*_\ell(k)$ in the proofs of Theorems~\ref{CEthm} and~\ref{CEstruc}. 

\begin{lemma}\label{parts}
For every $k, \ell \in \N$,
$$p_\ell^*(k) \, \le \, \left( \frac{e^2k}{\ell^2} \right)^\ell.$$
\end{lemma}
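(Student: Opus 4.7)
The plan is to exploit the simple observation that every partition of $k$ into $\ell$ distinct parts gives rise to exactly $\ell!$ ordered compositions of $k$ into $\ell$ positive parts, and then to bound compositions by a binomial coefficient.

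First, let $c_\ell(k)$ denote the number of compositions of $k$ into exactly $\ell$ positive integer parts, i.e., the number of ordered tuples $(a_1,\ldots,a_\ell) \in \N^\ell$ with $\sum_i a_i = k$. The classical stars-and-bars identity gives $c_\ell(k) = \binom{k-1}{\ell-1}$. Since any set $S = \{b_1,\ldots,b_\ell\}$ counted by $p_\ell^*(k)$ has exactly $\ell!$ orderings, and each such ordering is a (distinct) composition of $k$ into $\ell$ parts, we get
\[
\ell! \cdot p_\ell^*(k) \, \le \, c_\ell(k) \, = \, \binom{k-1}{\ell-1}.
\]

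Next, I would bound the two factors in a way that produces the advertised exponent. Using $\binom{k-1}{\ell-1} \le \binom{k}{\ell} \le (ek/\ell)^\ell$ and the standard estimate $\ell! \ge (\ell/e)^\ell$, we obtain
\[
p_\ell^*(k) \, \le \, \frac{1}{\ell!}\binom{k-1}{\ell-1} \, \le \, \left(\frac{e}{\ell}\right)^\ell \left(\frac{ek}{\ell}\right)^\ell \, = \, \left(\frac{e^2 k}{\ell^2}\right)^\ell,
\]
which is the claimed bound. (If $k < \ell(\ell+1)/2$, the bound holds vacuously since $p_\ell^*(k) = 0$.)

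There is no real obstacle here; the argument is a two-line counting bound, and essentially the only thing to be careful about is matching the constants so that the factor $e^2$ comes out correctly (with factorials bounded by $\ell! \ge (\ell/e)^\ell$ and binomial coefficients by $\binom{k}{\ell}\le (ek/\ell)^\ell$). This bound is far from tight in general — for instance, the bijection $a_i \mapsto a_i - i$ identifies $p_\ell^*(k)$ with the number of partitions of $k - \binom{\ell+1}{2}$ into at most $\ell$ parts, which for $k$ close to $\binom{\ell+1}{2}$ is much smaller — but the simple estimate above suffices for the applications to Theorems~\ref{CEthm} and~\ref{CEstruc}.
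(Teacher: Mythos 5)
Your proof is correct and takes essentially the same approach as the paper: count ordered tuples by stars-and-bars and divide by $\ell!$. Your version is in fact marginally cleaner, since using compositions into \emph{positive} parts gives $\binom{k-1}{\ell-1}$ rather than the paper's $\binom{k+\ell-1}{\ell-1}$, which lets you avoid the paper's extra factor $\bigl(\tfrac{k+\ell}{k}\bigr)^{\ell} < e^2$ and its separate treatment of small $\ell$.
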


\begin{proof}
Consider putting $k$ identical balls into $\ell$ labelled boxes. There are ${k + \ell - 1 \choose \ell - 1} = \frac{\ell}{k+\ell} {k + \ell \choose \ell}$ ways to do so, and each partition of $k$ into $\ell$ distinct parts is counted exactly $\ell!$ times. Using the bound $\ell! \ge \sqrt{2\pi \ell} \big( \frac{\ell}{e} \big)^\ell$, it follows that 
 $$p_\ell^*(k) \, \le \, \frac{1}{\ell!} \cdot \frac{\ell}{k + \ell} {k + \ell \choose \ell}  \, \le \,  \frac{2}{\ell+2} \cdot \frac{1}{\sqrt{2\pi\ell}} \bigg( \frac{e}{\ell} \bigg)^\ell \left( \frac{e(k + \ell)}{\ell} \right)^{\ell}  \, \le \, \left( \frac{e^2k}{\ell^2} \right)^\ell,$$
 if $k \ge {\ell + 1 \choose 2}$ and $\ell \ge 4$, since $\big( \frac{k+\ell}{k} \big)^{\ell} < e^2 < 6\sqrt{2\pi}$. Finally, note that the result is trivial if $\ell \le 3$, and that $p_\ell^*(k) = 0$ if $k < {\ell + 1 \choose 2}$. 
\end{proof}

In order to motivate the proofs of Theorems~\ref{S+S} and~\ref{S+S2}, we shall first sketch an easy proof of a weaker bound and an incorrect proof of a sharper one; we will use ideas from both in the actual proof. We begin with the weaker bound: given $c,\delta > 0$, let $C = C(\delta,c) > 0$ be sufficiently large, and suppose that $\ell^3 \ge Ck$. We claim that there are at most
\begin{equation}\label{S+Sweak}
2^{\delta \ell} \left( \frac{cek}{\ell^2} \right)^\ell
\end{equation}
sets $S \subset \N$ with $|S| = \ell$, $\sum_{a \in S} a = k$, and $|S + S| \le ck / \ell$. Note that this is weaker than Theorem~\ref{S+S} by a factor of $(3/2)^\ell$. 

We shall count `good' sequences $(a_1,\ldots,a_\ell)$ of length $\ell$, that is, sequences such that the underlying set $S = \{a_1,\ldots,a_\ell\}$ satisfies $|S| = \ell$, $\sum_{a \in S} a = k$ and $|S + S| \le ck / \ell$. Note that each such set $S$ will appear as a sequence exactly $\ell!$ times. Set $S_j = \{a_1,\ldots,a_j\}$ and observe that $\left| \big( S_j + a_{j+1} \big) \setminus \big(S_j + S_j \big) \right| \ge \delta |S_j|$ for at most $\delta \ell$ indices $j \in [\ell]$, since otherwise $|S+S| \ge \delta (\delta \ell / 2)^2 > ck / \ell$, where the last inequality follows because $\ell^3 \ge Ck$.

We now make a simple but key observation: that, for every set $S \subset \N$, there are at most $(1 - \delta)^{-1}|S+S|$ elements $y \in \N$ such that 
\begin{equation}\label{eq:S+y}
\left| \big( S + y \big) \setminus \big( S + S \big) \right| \, \le \, \delta |S|.
\end{equation}
To prove this, observe that there are $|S| \cdot |S+S|$ pairs $(a,b)$ with $a \in S$ and $b \in S+S$, and that if~\eqref{eq:S+y} holds then $a + y = b$ for at least $(1 - \delta)|S|$ pairs $(a,b) \in S \times (S+S)$. For each pair $(a,b)$ there is at most one such $y$, and so there are at most $\frac{|S| \cdot |S+S|}{(1 - \delta)|S|}$ elements $y$ which satisfy~\eqref{eq:S+y}, as claimed. 

Thus, as in the proof of Lemma~\ref{parts}, the number of good sequences $(a_1,\ldots,a_\ell)$ is at most 
\begin{eqnarray}\label{eq:sketch}
{\ell \choose \delta \ell} {k + \delta \ell \choose \delta \ell} \left( \frac{|S+S|}{1 - \delta} \right)^{(1 - \delta)\ell} & \le & \left( \frac{e}{\delta} \right)^{\delta \ell} \left( \frac{2ek}{\delta \ell} \right)^{\delta \ell} \left( \frac{1}{|S+S|} \right)^{\delta \ell}  e^{O(\delta \ell)} |S+S|^{\ell} \nonumber \\
& \le & \left( \frac{2e^2}{\delta^2 c} \right)^{\delta \ell}  e^{O(\delta \ell)}  \left( \frac{ck}{\ell} \right)^{\ell} \, \le \,  2^{O(\sqrt{\delta} \ell)} \left( \frac{ck}{\ell} \right)^{\ell},
\end{eqnarray}
assuming $\delta > 0$ is sufficiently small. Dividing by $\ell!$, we obtain~\eqref{S+Sweak}, as claimed.

Next, define the \emph{span} of a set $S \subset \N$ to be $\max(S) - \min(S)$ and observe that, for any set $S$, we have $2 \cdot \span(S) = \span(S + S)$. Let $B_j$ denote the set of elements $y$ (as above) for which $|(S_j + y) \setminus (S_j+S_j)| \le \delta |S_j|$. Intuitively, one would expect that $B_j + S_j \approx S_j + S_j$, which implies that
\begin{equation}\label{intuitively}
\span(B_j) + \span(S_j) \, = \, \span(B_j + S_j) \, \approx \, \span(S_j + S_j) \, = \, 2 \cdot \span(S_j),
\end{equation}
and hence $\span(B_j) \approx \span(S_j)$. This would imply that $\min(S_j) + B_j$ and $\max(S_j) + B_j$ are (almost) disjoint, since 
$$\max\big( \min(S_j) + B_j \big) \, = \, \min(S_j) + \max(B_j) \, \approx \, \max(S_j) + \min(B_j) \, = \, \min\big( \max(S_j) + B_j \big).$$ 
But now, if $B_j + S_j \approx S_j + S_j$, then
$$|B_j| \, \lesssim \, \frac{|S_j + S_j|}{2},$$ 
which would win us a factor of roughly $2^\ell$ in~\eqref{eq:sketch}. Unfortunately,~\eqref{intuitively} does not hold in general; for example, if $S_j = [k] \cup X$, where $X$ is a random subset of $\{k+1,\ldots,2k\}$ of size $\delta k$, then $\span(S_j) \approx 2k$ but $\span(B_j) \approx 3k$. Nevertheless, we shall be able to prove an approximate version of~\eqref{intuitively} (see~\eqref{spanBJ}, below) by considering a subset $J \subset S_j$ which is sufficiently dense close to its extremal values, $\max(J)$ and $\min(J)$.

\begin{proof}[Proof of Theorem~\ref{S+S}]
Let $c \ge c_0 > 0$ and $\delta > 0$, and note that without loss of generality we may assume that $\delta = \delta(c_0)$ is sufficiently small. Let $C = C(\delta,c_0) > 0$ be sufficiently large; with foresight, we remark that $C = 1/\delta^{13}$ will suffice. Note also that if $c \ge 3e/2$, then the theorem follows immediately from Lemma~\ref{parts}; we shall therefore assume that $c < 3e/2$.

Given $S \subset \N$ with $|S| = \ell$, $\sum_{a \in S} a = k$ and $|S+S| \le ck/\ell$, let $S_* = \max\big\{ \delta k / \ell, \,\min(S) \big\}$ and $S^* = \min\big\{ k/ \delta \ell, \,\max(S) \big\}$, so $\delta k / \ell \le S_* \le S^* \le k / \delta \ell$. Moreover, define
$$t_* \, = \, \min\Big\{ t \,\colon \big| S \cap \big[ (1 + \delta)^t S_*,(1 + \delta)^{t+1} S_* \big] \big| \ge \delta^3 \ell \Big\}$$
and $t^* = \min\Big\{ t \,\colon \big| S \cap \big[ (1 - \delta)^{t+1} S^*,(1 - \delta)^{t} S^* \big] \big| \ge \delta^3 \ell \Big\}$ (or $\infty$ if no such $t$ exists), and set 
\begin{equation}\label{Jdef}
J \,=\, [J_*,J^*] \, = \, \big[ (1 + \delta)^{t_*} S_*,(1 - \delta)^{t^*} S^* \big].
\end{equation}
Note that the set $J$ has `dense ends', that is, the sets $[(1- \delta)J^*,J^*]$ and $[J_*,(1 + \delta)J_*]$ each contain more than $\delta^3 \ell$ elements of $S$. We shall apply the argument which failed to work above to the set $S_J = S \cap J$. 

\bigskip
\noindent \textbf{Case 1:} $\max\big\{ t^*,t_* \big\} \ge 1 / \delta^2$.
\medskip

Suppose first that $t^* \ge 1/\delta^2$. Note that $S$ contains at most $\delta \ell$ elements greater than $S^*$, since $\sum_{a \in S} a = k$, and hence at most $2\delta \ell$ elements of $S$ greater than 
$$(1 - \delta)^{1/\delta^2} S^* \, \le \, e^{-1/\delta} \frac{k}{\delta \ell} \, \le \, \frac{\delta k}{\ell}.$$ 
Let $s \le 2\delta \ell$ denote the number of elements of $S$ greater than $\delta k / \ell$, and note that ${a \choose \ell - s} \le \big( \frac{e^2 a}{\ell} \big)^{\ell - s}$, since $e(\ell - s) \ge \ell$. By Lemma~\ref{parts}, it follows that there are at most  
\begin{equation}\label{eq:S+S}
\left( \frac{e^2k}{s^2} \right)^s {\delta k/\ell \choose \ell - s} \, \le \, \left( \frac{e^2k}{s^2} \right)^s \bigg( \frac{e^2 \delta k}{\ell^2} \bigg)^{\ell-s}  \, = \, \left( \frac{\ell^2}{\delta s^2} \right)^s \bigg( \frac{e^2 \delta k}{\ell^2} \bigg)^{\ell} \, \le \, \bigg( \frac{\sqrt{\delta} k}{\ell^2} \bigg)^\ell,
\end{equation}
such sets $S$, where the last inequality follows since $\delta > 0$ is sufficiently small. Summing over~$s$, it follows that there are at most 
$\ell \big( \frac{\sqrt{\delta} k}{\ell^2} \big)^\ell$ sets $S$ with $t^* \ge 1/\delta^2$. Since $\delta = \delta(c_0)$ was chosen sufficiently small, the required bound follows.

Similarly, if $t_* \ge 1 / \delta^2$, then $S$ contains at most $2\delta \ell$ elements greater than $\delta k / \ell$ (at most $\delta \ell$ in the range $[\delta k / \ell, k / \delta \ell] \subset [\delta k / \ell, (1+\delta)^{1/\delta^2} \delta k / \ell]$, and at most $\delta \ell$ greater than $k / \delta \ell$). Thus $\ell \big( \frac{\sqrt{\delta} k}{\ell^2} \big)^\ell$ is also an upper bound on the number of sets $S$ with $t_* \ge 1/\delta^2$. 

\bigskip
\noindent \textbf{Case 2:} $\max\big\{ t^*,t_* \big\} \le 1 / \delta^2$.
\medskip

Since $t^*, t_* \le 1/\delta^2$, there are at most $3\delta \ell$ elements of $S$ outside the set $[0,\delta k / \ell] \cup J$. Moreover, we may assume that $S$ has at least $\ell / 3$ elements larger than $\delta k / \ell$, see~\eqref{eq:S+S}, and hence that $|S_J| \ge \ell/4$. Thus, since $k = \sum_{a \in S} a \ge |S_J| \cdot J_*$, it follows that $J_* \le 4k/ \ell$. 

Set $J_0 = \big\{ a \in S : a \le \delta k / \ell \big\}$, let $b = |J_0|$ and set $r = |S \setminus (J \cup J_0)| \le 3\delta \ell$. Suppose first that $\span(J) < ck/8\ell$. Then, by Lemma~\ref{parts}, the number of choices for $S$ is at most\footnote{Here, as usual, $0^0 = 1$.}
\begin{multline*}\label{C2:eq1}
\sum_{b = 0}^{2\ell/3} \sum_{r = 0}^{3\delta \ell} \sum_{J_*,J^*} {\delta k / \ell \choose b} \left( \frac{e^2 k}{r^2} \right)^r {ck/8\ell \choose \ell - b - r} \, \le \, k^2 \ell^2 \max_{b  \le 2 \ell / 3, \, r \le 3\delta \ell} \left( \frac{e \delta k}{\ell b} \right)^b \left( \frac{e^2 k}{r^2} \right)^r \left( \frac{c e k}{2\ell^2} \right)^{\ell - b - r}\\
= \, k^2 \ell^2 \max_{b  \le 2 \ell / 3, \, r \le 3\delta \ell} \left( \frac{2 \delta \ell}{bc} \right)^b \left( \frac{2e\ell^2}{cr^2} \right)^r  \left( \frac{c e k}{2\ell^2} \right)^\ell \, \le \, 2^{\sqrt{\delta} \ell}  \left( \frac{cek}{2\ell^2} \right)^\ell
\end{multline*}
if $\delta = \delta(c_0) > 0$ is sufficiently small, since the maximum occurs at $r = 3\delta \ell$ and $b = 2\delta \ell / (ec)$. 

Thus we may assume that $\span(J) \ge ck/8\ell$, from which it follows that
\begin{equation}\label{spanJ}
\span(J) \, = \, J^* - J_* \, \ge \, \sqrt{\delta} \big( J^* + J_* \big),
\end{equation} 
since $J_* \le 4k/ \ell$, and so $J^* \ge J_* + ck / 8\ell \ge \big( 1 + c/32 \big) J_*$. 

\medskip
Let us count sequences $\a = (a_1,\ldots,a_\ell)$ of distinct elements such that $S = \{a_1,\ldots,a_\ell\}$ satisfies $\sum_{a \in S} a = k$ and $|S+S| \le ck/\ell$. Given such a sequence $\a$, for each $j \in [\ell]$, set 
$$S_j \, = \, \big\{ a_1,\ldots,a_j \big\} \cap J,$$
where $J$ was defined in~\eqref{Jdef}, and define 
\begin{equation}\label{Bdef}
B_j \, = \; \Big\{ y \in \N \,:\,  \left| \big( S_j + y \big) \setminus \big( S_j + S_j \big) \right| \le \delta^6 |S_j| \Big\}.
\end{equation}
We make the following key claim.

\begin{claim*} 
Suppose that the intervals $[J_*,(1+\delta)J_*]$ and $[(1- \delta)J^*,J^*]$ contain more than $\delta^5 \ell$ elements of $\big\{a_1,\ldots,a_{\delta \ell} \big\}$ each. Then
$$|B_j| \,\le \, \left( \ds\frac{2}{3} + \delta \right) |S+S|$$ 
for every $\delta \ell \le j \le \ell$. 
\end{claim*}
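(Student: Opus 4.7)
The plan is to choose two specific elements $p_0 \in L := S_j \cap [J_*, (1+\delta)J_*]$ and $q_0 \in R := S_j \cap [(1-\delta)J^*, J^*]$ via a double-counting/pigeonhole argument, and then apply inclusion-exclusion to the translates $B_j + p_0$ and $B_j + q_0$, both essentially contained in $T := S_j + S_j \subseteq S+S$. The factor $\tfrac{2}{3}$ should arise because, by \eqref{spanJ}, these two translates cover nearly-disjoint halves of $[2J_*, 2J^*]$, overlapping only in a short interval near the midpoint.

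First, the hypothesis of the claim gives $|L|, |R| \ge \delta^5\ell$ for every $j \ge \delta\ell$, since $L,R \subseteq S_{\delta\ell} \subseteq S_j$. The definition of $B_j$ yields
\[
\sum_{y\in B_j}\#\{a\in S_j : y+a\notin T\} \le \delta^6 |S_j||B_j|,
\]
so swapping the order of summation and averaging separately over $L$ and over $R$ produces $p_0\in L$ and $q_0\in R$ for which $|\{y\in B_j : y+p_0\notin T\}|$ and $|\{y\in B_j : y+q_0\notin T\}|$ are both at most $\delta|B_j|$. Inclusion-exclusion then gives
\[
2|B_j|(1-\delta) \le |T| + |B_j\cap(B_j + (q_0-p_0))| \le |S+S| + |B_j\cap(B_j+(q_0-p_0))|,
\]
reducing the claim to bounding the intersection on the right by $(\tfrac{1}{3}+O(\sqrt{\delta}))|S+S|$.

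To control this intersection I would use two geometric facts: first, $q_0-p_0 \ge (1-\sqrt{\delta})\span(J)$; second, every $y\in B_j$ with both $y+p_0,\, y+q_0\in T\subseteq[2J_*,2J^*]$ must lie in $[(1-\delta)J_*, (1+\delta)J^*]$, an interval of length at most $(1+\sqrt{\delta})\span(J)$ by \eqref{spanJ}. Hence $(B_j + p_0) \cap (B_j + q_0)$ lies in a subinterval of $[2J_*, 2J^*]$ of length at most $2\sqrt{\delta}\,\span(J)$ situated around $J_* + J^*$. The main obstacle is to convert this length bound into a count of at most $(\tfrac{1}{3} + O(\sqrt{\delta}))|S+S|$ elements of $T$ there: my plan is to exploit that $L+L \subseteq [2J_*, 2(1+\delta)J_*]$, $L+R \subseteq [J_* + (1-\delta)J^*, (1+\delta)J_* + J^*]$, and $R+R \subseteq [2(1-\delta)J^*, 2J^*]$ live in disjoint sub-intervals of $[2J_*, 2J^*]$ near $2J_*$, $J_*+J^*$, and $2J^*$ respectively, each contributing comparable numbers of elements to $T$; together with \eqref{spanJ} this will force at least a $(\tfrac{2}{3} - O(\sqrt{\delta}))$-fraction of $|S+S|$ to lie outside the central overlap. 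Substituting into the previous display yields $2|B_j|(1-\delta) \le (\tfrac{4}{3}+O(\sqrt{\delta}))|S+S|$, giving $|B_j| \le (\tfrac{2}{3}+\delta)|S+S|$.
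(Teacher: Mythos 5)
Your setup is sound and runs parallel to the paper's: the averaging argument producing $p_0\in L$ and $q_0\in R$ with $|(B_j+p_0)\setminus T|,\,|(B_j+q_0)\setminus T|\le\delta|B_j|$ is a valid variant of the paper's set $A$ of ``good'' translating elements, and the inclusion--exclusion display is correct. The genuine gap is in the final step, which you yourself flag as the main obstacle: the overlap $(B_j+p_0)\cap(B_j+q_0)$ should be bounded by roughly $\tfrac{1}{2}|B_j|$, not by $\bigl(\tfrac{1}{3}+O(\sqrt\delta)\bigr)|S+S|$, and your route to the latter fails. The assertion that $L+L$, $L+R$ and $R+R$ contribute ``comparable numbers of elements'' to $T=S_j+S_j$ has no justification and is false in general; worse, $T$ also contains all sums involving the bulk of $S_j$, which need not lie in $L\cup R$ at all. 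If, say, most of $S_j$ is a Sidon-type set concentrated near the midpoint $(J_*+J^*)/2$, with just over $\delta^5\ell$ elements in each end interval, then almost all of $T$ sits inside your central window around $J_*+J^*$, so no bound of the form $\bigl(\tfrac{1}{3}+O(\sqrt\delta)\bigr)|S+S|$ on the number of elements of $T$ there can hold. (The claim survives in such examples only because $|B_j|$ is then itself tiny compared with $|T|$ --- exactly the information a bound in terms of $|B_j|$ retains and a bound in terms of $|T|$ discards.)

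The correct finish uses only facts you already have. Your second geometric fact, applied to \emph{every} $y\in B_j$ (possible because $|L|,|R|>\delta^5\ell\ge\delta^6|S_j|$, so each $y\in B_j$ has some $a\in L$ and some $a'\in R$ with $y+a,\,y+a'\in T\subseteq[2J_*,2J^*]$), gives $B_j\subseteq[(1-\delta)J_*,(1+\delta)J^*]$ and hence $\span(B_j)\le(1+\sqrt\delta)\span(J)$ by \eqref{spanJ}; combined with $q_0-p_0\ge(1-\sqrt\delta)\span(J)$ this yields $q_0-p_0>\tfrac{1}{2}\span(B_j)$. Now translate the overlap by $-p_0$: it is identified with the elements of $B_j$ in the top $\span(B_j)-(q_0-p_0)$ of the range of $B_j$; translating by $-q_0$ identifies it with the elements in the bottom $\span(B_j)-(q_0-p_0)$. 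Since $q_0-p_0>\tfrac{1}{2}\span(B_j)$ these two portions are disjoint (up to one point), so the overlap has size at most $\tfrac{1}{2}(|B_j|+1)$, and your display becomes $(3/2-2\delta)|B_j|\le|T|+O(1)\le|S+S|+O(1)$, which is the claim. This is precisely how the paper concludes, with $A_*=\min A$ and $A^*=\max A$ playing the roles of $p_0$ and $q_0$.
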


\begin{proof}[Proof of claim]
Fix $j$ with $\delta \ell \le j \le \ell$. Recall from~\eqref{Bdef} that $\left| \big( S_j + y \big) \setminus \big( S_j + S_j \big) \right| \le \delta^6 \ell$ for every $y \in B_j$, and that $2 \cdot \span(S_j) = \span(S_j + S_j)$. We claim that
\begin{equation}\label{spanBJ}
\big( 1 - \sqrt{\delta} \big)\span(J) + \span(B_j) \, \le \, \span(S_j + S_j).
\end{equation}
Indeed, since $[(1- \delta)J^*,J^*]$ contains more than $\delta^5 \ell$ elements of $\big\{ a_1, \ldots, a_{\delta \ell} \big\}$, and hence of $S_j$, it follows that $x^* + \max(B_j) \in S_j + S_j$ for some $x^* \in S_j \cap [(1- \delta)J^*,J^*]$. Similarly, our assumption on $[J_*,(1+\delta)J_*] \cap \big\{ a_1, \ldots, a_{\delta \ell} \big\}$ implies that $x_* + \min(B_j) \in S_j + S_j$ for some $x_* \in S_j \cap [J_*,(1+\delta)J_*]$, and therefore 
$$\span(S_j + S_j) \, \ge \, \span(B_j) + x^* - x_* \, \ge \, \span(B_j) + \span(J) - \delta(J^* + J_*).$$ 
But by~\eqref{spanJ} we have $\span(J) \ge \sqrt{\delta} \big( J^* + J_* \big)$, so~\eqref{spanBJ} follows.

Now $S_j \subset J$, and so $2 \cdot \span(J) \ge \span(S_j + S_j)$, which, together with~\eqref{spanBJ},  implies that 
\begin{equation}\label{spanBS}
\span(B_j) \, \le \, \frac{1 + \sqrt{\delta}}{2} \span(S_j + S_j) \, = \, \big( 1 + \sqrt{\delta} \big) \span(S_j).
\end{equation}
Next, define the set 
$$A \, = \, \Big\{ a \in S_j \,:\, \big| \big( a + B_j \big) \cap \big( S_j + S_j \big) \big| \ge \big( 1 - \delta \big)|B_j| \Big\}.$$
Note that
$$|S_j \setminus A| \cdot \delta |B_j| \, \le \, \big| \big\{ (a,y) \in S_j \times B_j \,:\, a+y \not\in S_j + S_j \big\} \big| \, \le \, \delta^6 |S_j| \cdot |B_j|,$$
and hence $|A| \ge (1 - \delta^5)|S_j|$. Let $A_* = \min A$ and $A^* = \max A$, and consider the set
$$D \, = \, \big( A_* + B_j \big) \cup \big( A^* + B_j \big).$$

\medskip
\noindent \textbf{Subclaim:} $|D| \ge 3|B_j| / 2$. 

\begin{proof}[Proof of subclaim]
Since $[J_*,(1+\delta)J_*]$ and $[(1- \delta)J^*,J^*]$ each contain more than $\delta^5 \ell$ elements of $\big\{ a_1, \ldots, a_{\delta \ell} \big\}$ and $|A| \ge (1 - \delta^5)|S_j|$, we have 
$$\span(A) \, \ge \, \span(S_j) - \delta\big( J_* + J^* \big) \, \ge \, \big( 1 - 2\sqrt{\delta} \big)\span(S_j),$$
where the second inequality follows by~\eqref{spanJ}, and the fact that $2 \cdot \span(S_j) \ge \span(J)$. Thus, by~\eqref{spanBS},
$$\span(B_j) \,\le\, \big( 1 + \sqrt{\delta} \big) \span(S_j) \,\le\,  \big( 1 + 4\sqrt{\delta} \big) \span(A) \,\le\, 2 \cdot \span(A),$$
and so $\left| \big( A_* + B_j \big) \cap \big( A^* + B_j \big) \right| \le |B_j|/2$, which easily implies the subclaim. 
\end{proof}

Finally, observe that $|D \setminus (S_j + S_j)| \le 2\delta |B_j|$ by the definition of $A$. Hence 
$$\left( \frac{3}{2} - 2 \delta \right) |B_j| \, \le \, |S_j + S_j| \, \le \, |S + S|,$$
as required.
\end{proof}

Now, recall that (by definition) for every set $S$, the intervals $[J_*,(1+\delta)J_*]$ and $[(1- \delta)J^*,J^*]$ each contain at least $\delta^3 \ell$ elements of $S$. Let $\I(S)$ denote the collection of orderings of the elements of $S$ such that the intervals $[J_*,(1+\delta)J_*]$ and $[(1- \delta)J^*,J^*]$ each contain more than $\delta^5 \ell$ elements of $\big\{a_1,\ldots,a_{\delta \ell} \big\}$, and write $\a \in \I$ if $\a \in \I\big( \{a_1,\ldots,a_\ell\} \big)$. Observe that, given a random ordering $\a = (a_1,\ldots,a_\ell)$ of the elements of $S$, the probability that $\a \in \I(S)$ is at least $1/2$. Thus there are at least $\ell! / 2$ orderings $\a \in \I$ for each set $S$.

In order to count sequences $\a \in \I$, recall that $J_0 = \big\{ a \in S : a \le \delta k / \ell \big\}$ and $b = |J_0|$, let $\hat{J}_0 = \big\{ j \in [\ell] : a_j \in J_0 \big\}$, and set
$$Q \, = \, \big\{ 1,\ldots, \delta \ell \big\} \cup \Big\{ j \in [\ell] \,:\, a_j \not\in B_{j-1} \text{ and } j \not\in \hat{J}_0 \Big\}.$$
We claim that if $\a \in \I$, then $|Q| \le 5\delta \ell$. To see this, recall that $r = |S \setminus (J \cup J_0)| \le 3\delta \ell$, and suppose that there are at least $\delta \ell$ values of $j \ge \delta \ell$ with $a_{j+1} \in J \setminus B_j$. Then each such $j$ adds at least $\delta^6 |S_j| \ge \delta^{11} \ell$ new elements to $S_J + S_J$, since $\a \in \I$ implies that $|S_{\delta \ell} | \ge \delta^5 \ell$. But then 
$$|S+S| \, \ge \, |S_J + S_J| \, \ge \, \delta \ell \cdot \delta^{11} \ell \, > \, ck / \ell,$$
since $\ell^3 \ge Ck = k/\delta^{13} > ck / \delta^{12}$, which contradicts our assumption. 

Thus, by the Claim, and setting $q = |Q|$, the number of choices for $S$ is at most
\begin{equation}\label{eq:S+Send}
\sum_{b = 0}^{2\ell/3} \sum_{q = 0}^{5 \delta \ell} \frac{2}{\ell!} {\ell \choose q} {k + q - 1 \choose q - 1}  {\ell \choose b} \left( \frac{\delta  k}{\ell} \right)^{b} \left(  \left( \frac{2}{3} + \delta \right) \frac{ck}{\ell} \right)^{\ell - q - b}.
\end{equation}
Indeed, for each choice of the sets $\hat{J}_0$ and $Q$, and the values of $a_j$ for each $j \in \hat{J}_0 \cup Q$, there are at most $|B_j| \le \left( \frac{2}{3} + \delta \right) |S+S|$ choices for each remaining element $a_{j+1}$. Recall that $q \le 5\delta \ell$, by the observations above,   and that each set $S$ is counted at least $\ell! / 2$ times as a sequence $\a \in \I$. Since $|S+S| \le ck / \ell$,~\eqref {eq:S+Send} follows.

Finally, note that the summand in~\eqref {eq:S+Send} is bounded above by
$$\frac{2}{\ell!} \left( \frac{e \ell}{q} \cdot \frac{2k}{q} \cdot \frac{3\ell}{2ck} \right)^q \left( \frac{e \ell}{b} \cdot \frac{\delta  k}{\ell} \cdot \frac{3\ell}{2ck} \right)^{b} \left(  \left( \frac{2}{3} + \delta \right) \frac{ck}{\ell} \right)^{\ell} \, = \, \frac{2^{O(\delta \ell)}}{\ell!} \left( \frac{3e \ell^2}{cq^2} \right)^q \left( \frac{3 e \delta \ell}{2cb} \right)^{b} \left( \frac{2ck}{3\ell} \right)^{\ell}$$
which is maximized with $b = 3 \delta \ell / 2c$ and $q = 5 \delta \ell$, and so is at most
$$2^{\gamma \ell} \left( \frac{2cek}{3\ell^2} \right)^\ell,$$
where $\gamma = \gamma(\delta,c_0) \to 0$ as $\delta \to 0$ for any fixed $c_0 > 0$. Since we chose $\delta = \delta(c_0)$ to be sufficiently small, the theorem follows.
\end{proof}

The proof of Theorem~\ref{S+S2} is almost identical to the proof of Theorem~\ref{S+S} given above; we need only to add the following observations: that $S_j \subset B_j$, and that $a_{j+1} \not\in S_j$. 

\begin{proof}[Proof of Theorem~\ref{S+S2}]
Let $\delta > 0$, and note that without loss of generality we may assume that $\delta$ is sufficiently small. Suppose that $\ell \in \N$ is sufficiently large and that ${\ell \choose 2} \le k \le \ell^2 / \delta$, and set $c_0 = 2 \ell^2 / k$. Let $C = C(\delta^3,c_0) > 0$ be given by Theorem~\ref{S+S}, and note that since $\delta \le \ell^2 / k \le 2$, $C$ depends only on $\delta$. Let $\lambda \ge 2$ and set $c = \lambda \ell^2 / k$, and note that $\lambda \ell = ck / \ell$ and $\ell^3 \ge (\delta \ell) k \ge C k$. 

We are therefore in the setting of Theorem~\ref{S+S}, and hence we can repeat the proof above up to~\eqref{eq:S+Send}, except replacing $\delta$ everywhere by $\delta^3$. Using the observations that $S_j \subset B_j$ and $a_{j+1} \not\in S_j$, we deduce that the number of choices for $S$ is at most
\begin{equation}\label{eq:T13pf}
\sum_{b = 0}^\ell \sum_{q = 0}^{5\delta^3 \ell} \frac{2}{\ell!} {\ell \choose q} {k + q-1 \choose q-1}  {\ell \choose b} \left( \frac{\delta^3 k}{\ell} \right)^{b} \prod_{j \in Z} \left( \left( \frac{2}{3} + \delta^3 \right) |S+S| - |S_j| \right),
\end{equation}
where $Z = \big\{ j > \delta^3 \ell \,:\, a_{j+1} \in B_j \big\}$, so in particular $|Z| = \ell - q - b$. 

Recalling that $|S_j| \ge j - q - b$ and $|S+S| \le \lambda \ell$, and applying the AM-GM inequality, we see that 
$$\prod_{j \in Z} \left( \left( \frac{2}{3} + \delta^3 \right) |S+S| - |S_j| \right) \, \le \, \left( \left( \frac{2}{3} + \delta^3 \right) \lambda \ell + q + b - \frac{1}{|Z|} \sum_{j \in Z} j \right)^{\ell - q - b},$$
and hence~\eqref{eq:T13pf} is at most
\begin{equation}\label{eq:AMGM}
\sum_{b = 0}^\ell \sum_{q = 0}^{5\delta^3 \ell}  \frac{2^{O(\delta^2 \ell)}}{\ell!} \left( \frac{e^3 k \ell}{q^2} \right)^q \left( \frac{\delta^3 e k}{b} \right)^b  \left( \frac{2\lambda \ell}{3} - \frac{\ell}{2}  + \frac{3(q + b)}{2} \right)^{\ell - q - b}.
\end{equation}
Since ${\ell \choose 2} \le k \le \ell^2 / \delta$ and $\lambda \ge 2$, the summand in~\eqref{eq:AMGM} is maximized with $q = 5 \delta^3 \ell$ and $b \le 3\delta^2 \ell / c$, and hence~\eqref{eq:AMGM} is at most
$$2^{\delta \ell} \left( \frac{(4\lambda - 3)e}{6} \right)^\ell,$$
as required.
\end{proof}

We end this section by briefly discussing the factor $(3/2)^\ell$ which separates the bound in Theorem~\ref{S+S} from that in Conjecture~\ref{S+Sconj}. Although it may seem obvious that we lost this factor in the subclaim, where one might hope that $|D| \ge (2 - \delta)|B_j|$, we remark that this is in fact not the case. Indeed, let $x \ll y \ll z$, and consider the set $S_j = (T + z) \cup (T + 2z)$, where $T = U \cup W$ is composed of $U = [y,y+x]$ and $W$, a random subset of $[0,2y]$ of density $\eps$. Note that if $\delta z > 2y$, then this set is dense near its extremes, and moreover $B_j \approx [z,2y+z] \cup [2z,2y+2z]$ and $A_j \approx (U + z) \cup (U + 2z)$. Thus 
$$A_j + B_j \,\approx\, \big[ y+z, x+3y+z \big] \cup \big[ y+2z, x+3y+2z \big] \cup \big[ y+3z, x+3y+3z \big]$$ 
has size roughly $3 |B_j| / 2$, and so in this case the subclaim is sharp. 

It therefore seems that in order to prove Conjecture~\ref{S+Sconj}, one would have to use some structural properties of $S_j$. Indeed, since $B_j \approx B_{j+1}$ for each $j \in [\ell]$ and a typical $S_j$ is a `random-like' subset of $B_1 \cup \ldots \cup B_{j-1}$, one might hope to bound the probability that (i.e., the number of $S_j$ for which) in the subclaim we have $|D| < (2 - \delta)|B_j|$. However, since the bounds in Theorems~\ref{S+S} and~\ref{S+S2} will suffice to prove Theorems~\ref{CEthm} and~\ref{CEstruc}, we shall not pursue this matter here.

\section{Proof of Theorems~\ref{CEthm} and~\ref{CEstruc}}\label{CESec}

We are now ready to prove Theorem~\ref{CEthm}, which generalizes the Cameron-Erd\H{o}s Conjecture to sum-free sets of size $m$, and its structural analogue, Theorem~\ref{CEstruc}. Both theorems will follow from essentially the same proof; we shall first prove Theorem~\ref{CEthm}, and then point out how the proof can be adapted to deduce Theorem~\ref{CEstruc}. As noted earlier, we shall for simplicity assume throughout that $n$ is even.

The proof is fairly long and technical so, in order to aid the reader, we shall start by giving a brief sketch. The argument is broken into a series of six claims, each relying on the earlier ones; the first five being relatively straightforward, and the last being somewhat more involved. 

We begin, in Claim~1, by using the Hypergeometric Janson Inequality to give a general upper bound on the number of sum-free $m$-sets $I \subset [n]$ with $S = I \cap [n/2]$ fixed. In Claim~2, we use this bound, together with Propositions~\ref{CEprop} and~\ref{prop:Janson}, to prove the theorem in the case $m \ge (1/2 - \delta) n$. Then, in Claim~3, we use Claim~2, Propositions~\ref{CEprop} and~\ref{prop:Janson}, and induction on $n$, to deal with the case $|S| \ge \delta m$. Writing $\ell = |S|$ and $k = \sum_{a \in S} (n/2 - a)$, in Claims~4 and~5 we use Claims~1,~2 and~3 and Lemma~\ref{parts} to deal with the (easy) cases $\ell = O(n/m)$ and $k \gg \ell^2 n / m$. Finally, in Claim~6, we deal with the remaining (harder) cases; however, since we now have $\ell^3 \ge Ck$, we may apply Theorem~\ref{S+S} in place of Lemma~\ref{parts}. In fact, it turns out that when $m = \Theta(n)$ the bound in Theorem~\ref{S+S} is not quite strong enough for our purposes, but in this case we have $|S+S| = O(|S|)$, and so may instead use Theorem~\ref{S+S2}. Each of the claims is stated in such a way as to facilitate the deduction of Theorem~\ref{CEstruc}, which follows by exactly the same argument, with a couple of minor tweaks.

\begin{proof}[Proof of Theorem~\ref{CEthm}]
Fix $\delta > 0$ sufficiently small, let $C = C(\delta) > 0$ be a sufficiently large constant, and let $n \in \N$ and $1 \le m \le n/2$. We shall show that there are at most $2^{C n/m} {n/2 \choose m}$ sum-free $m$-subsets of $[n]$. We shall use induction on $n$, and so we shall assume that the result holds for all smaller values of $n$. Since the result is trivial if $m \le C^{1/3}\sqrt{n}$ (since then $2^{Cn/m} {n/2 \choose m} \ge \binom{n}{m}$), we shall assume that $m \ge C^{1/3} \sqrt{n}$, and thus that $n$ is sufficiently large. 

For any set $I \subset [n]$, let
$$S(I) \,=\, \big\{ x \in I \,:\, x \le n/2 \big\}$$ 
denote the collection of elements of $I$ which are at most $n/2$, as in the statement of Theorem~\ref{CEstruc}. Moreover, given a set $S \subset [n/2]$, let $S' = \big\{ x \in S : x > n/4 \big\}$. 

We begin by giving a general bound on the number of sum-free $m$-sets $I \subset [n]$ with $S(I) = S$, for each $S \subset [n/2]$. For each $\ell \in [m]$ and $k \in \N$, let $\S(k,\ell)$ denote the collection of sets $S \subset [n/2]$ such that $|S| = \ell$ and 
$$\sum_{a \in S} \left( \frac{n}{2} - a \right) \,=\, k.$$
The following claim follows easily from the Hypergeometric Janson Inequality.  

\begin{cclaim}\label{claim:HJI}
For every $k,\ell \in \N$ with $\ell \le m/2$, and every $S \in \S(k,\ell)$, there are at most
$$\min\left\{ {n/2 - |S'+S'| \choose m - \ell}, \, C \cdot \max \left\{  e^{-km^2 / 2n^2}, e^{-km / 8n\ell} \right\} {n/2 \choose m - \ell} \right\}$$
sum-free $m$-sets $I \subset [n]$ such that $S(I) = S$.
\end{cclaim}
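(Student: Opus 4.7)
The plan is to fix $S \in \S(k, \ell)$ and bound the number of completions $R = I \setminus S$, which must be an $(m-\ell)$-subset of $(n/2, n]$ making $I = S \cup R$ sum-free. The first step is to observe that since any two elements of $R$ sum to more than $n$, the only Schur triples $x + y = z$ in $S \cup R$ that involve $R$ are of two types: either $x, y \in S$ and $z \in R$ (forcing $R \cap (S+S) = \emptyset$), or $x \in S$ and $y, z \in R$ (forcing that $R$ contains no pair $\{y, z\}$ with $z - y \in S$). All other cases either have all elements in $S$ (so they are ruled out by $S \subset I$ being sum-free) or put two elements of $R$ into a sum that exceeds $n$.

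The first bound follows at once from the first forbidden configuration, restricted to the sub-sumset $S' + S'$, where $S' = S \cap (n/4, n/2]$: since $S' \subset (n/4, n/2]$, every element of $S' + S'$ lies in $(n/2, n]$ and is thus a forbidden element of $R$. Hence $R$ is an $(m-\ell)$-subset of the $(n/2 - |S'+S'|)$-element set $(n/2, n] \setminus (S' + S')$, giving the stated count.

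For the second bound I would apply the Hypergeometric Janson Inequality (Lemma~\ref{HJI}) with $X = (n/2, n]$ (so $|X| = n/2$), $R$ a uniformly chosen random $(m-\ell)$-subset of $X$, and bad family $\{U_i\}_{i \in J}$ consisting of all two-element subsets $\{y, z\}$ of $X$ with $z - y \in S$. For each $a \in S$ there are precisely $n/2 - a$ such pairs, so $|J| = \sum_{a \in S}(n/2 - a) = k$. A direct computation using $\ell \le m/2$ gives $\mu = 4k(m-\ell)^2/n^2 \ge km^2/n^2$, from which $e^{-\mu/2} \le e^{-km^2/(2n^2)}$. For $\Delta$, the key point is that each vertex of $X$ is contained in at most $2\ell$ of the $U_i$ (at most two for each $a \in S$), so the number of ordered incident pairs $(i, j)$ is at most $4k\ell$, which yields $\Delta \le 32 k \ell (m-\ell)^3 / n^3$ and hence $\mu^2/(2\Delta) \ge k(m-\ell)/(4n\ell) \ge km/(8n\ell)$. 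Feeding these estimates into Lemma~\ref{HJI} and multiplying by $\binom{n/2}{m-\ell}$ (the number of $(m-\ell)$-subsets of $X$) produces the second bound. I do not anticipate any real obstacle here; the enumeration of forbidden configurations is forced, and the only care needed is to track constants in $\mu$ and $\Delta$ closely enough to recover exactly the exponents $1/(2n^2)$ and $1/(8n\ell)$ stated in the claim.
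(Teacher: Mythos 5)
Your proposal is correct and follows essentially the same route as the paper: the first bound via $S'+S' \subset (n/2,n]$ being forbidden for $I$, and the second via the Hypergeometric Janson Inequality applied to the graph of pairs in $(n/2,n]$ differing by an element of $S$, with the same estimates $\mu \ge km^2/n^2$ and $\Delta \le 32k\ell(m-\ell)^3/n^3$. The preliminary classification of Schur triples meeting $R$ is a nice explicit touch but matches what the paper does implicitly.
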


\begin{proof}[Proof of Claim~\ref{claim:HJI}]
Since $I$ is sum-free and $S' \subset I$, it follows that $I$ contains no element of $S' + S' \subset \{n/2+1,\ldots,n\}$. Since $S(I) = S$ and $|S| = \ell$, the first bound follows. For the second bound, we use the Hypergeometric Janson Inequality. Define the graph $G$ of `forbidden pairs' by setting
$$V(G) = \big\{ n/2+1, \ldots, n \big\} \quad \text{and} \quad E(G) = \big\{ \{x, x+s\} \colon s \in S \big\},$$
and observe that $I$ is an independent set in $G$, and that $G$ has $k$ edges and maximum degree at most $2\ell$, since $S(I) = S \in \S(k,\ell)$.

Let $\mu$ and $\Delta$ be the quantities defined in the statement of Lemma~\ref{HJI} and note that we are applying the lemma with $|X| = n/2$ and $|R| = m - \ell$. Recalling that $\ell \le m/2$, we have
$$\mu \,=\, k \cdot \frac{(m-\ell)^2}{(n/2)^2} \, \ge \, \frac{km^2}{n^2}  \quad \text{and} \quad \Delta \,\le\, (2\ell)^2 \left( \frac{k}{\ell} \right) \bigg( \frac{m - \ell}{(n/2)} \bigg)^3 \, \le \, \frac{32k\ell (m-\ell)^3}{n^3}.$$
Thus $\mu / 2 \ge km^2 / 2n^2$ and $\mu^2 / 2\Delta \ge km/  8 n \ell$, and so the claimed inequality follows.
\end{proof}

In the calculation below, we shall on several occasions wish to make the assumption that $n - 2m \ge \delta n$. The next claim deals with the complementary case.

\begin{cclaim}\label{almosthalf} 
If $m \ge \left( \ds\frac{1}{2} - \delta \right) n$, then there are at most $\ds\left( \frac{1}{2^{\ell}} + \frac{1}{n^2} \right) {n/2 \choose m}$ sum-free $m$-sets $I \subset [n]$ such that $|S(I)| = \ell$ and $I \not\subset O_n$.
\end{cclaim}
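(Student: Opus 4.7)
My plan is to combine Proposition~\ref{CEprop}, Proposition~\ref{prop:Janson}, and Claim~\ref{claim:HJI}. The guiding picture is that for $m$ within $\delta n$ of $n/2$ the only large sum-free sets look like either $O_n$ or the top interval $\{n/2+1,\ldots,n\}$ (and their slight perturbations); the requirement $I\not\subset O_n$ excludes the first family, while $|S(I)|=\ell$ quantifies how far $I$ deviates from the second.

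Since $m\ge(1/2-\delta)n\ge C\sqrt{n\log n}$ for large $n$, Proposition~\ref{prop:Janson} applies with a suitably small $\delta_0$ and bounds the sum-free $m$-sets with $|I\setminus O_n|\le\delta_0 m$ by $(1+1/n^3)\binom{n/2}{m}$. Subtracting the $\binom{n/2}{m}$ subsets of $O_n$ itself leaves at most $(1/n^3)\binom{n/2}{m}$ with $I\not\subset O_n$, which is absorbed by the $(1/n^2)\binom{n/2}{m}$ term in the claim. Proposition~\ref{CEprop} similarly eliminates the $I$ that are not $\delta_0 m$-close to any $B\in\B_n$, contributing a further $2^{-\eps m}\binom{n/2}{m}\le(1/n^3)\binom{n/2}{m}$. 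Thus we only need to handle $I$ with $|I\setminus B|\le\delta_0 m$ for some length-$n/2$ interval $B=[a+1,a+n/2]$; since $I\cap B$ is a sum-free subset of $B$ of size at least $(1-\delta_0)m\ge(1/2-\delta-\delta_0)n$, and the largest sum-free subset of $B$ has size $a/2+n/4$, we must have $a\ge n/2-O(\delta n)$, so $B$ lies within $O(\delta n)$ of the top interval.

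For each such $B$ I would apply Claim~\ref{claim:HJI} to each candidate $S=S(I)$ of size $\ell$ and sum over $S$, using the first bound $\binom{n/2-|S'+S'|}{m-\ell}$ when $S$ is concentrated in $(n/4,n/2]$ (so $|S'+S'|\ge 2\ell-1$ is large) and the second bound $C\max(e^{-km^2/2n^2},e^{-km/8n\ell})\binom{n/2}{m-\ell}$ when the distance sum $k=\sum_{a\in S}(n/2-a)$ is large. The admissible $S$'s are few, because the closeness of $I$ to $B$ forces $S$ into a narrow window near $n/2$, and there are only $O(\delta n)$ candidate values of $a$. The main obstacle is ensuring that the resulting bound is sharp enough to produce exactly the factor $(1/2)^\ell$ uniformly in $\ell$, and in particular for bounded $\ell$, where the saving coming from Claim~\ref{claim:HJI} alone is only a constant. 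Here I would exploit the stability of near-extremal sum-free subsets of $[a+1,a+n/2]$: such a set must be structurally close to the upper half $\{x\in B:2x>a+n/2\}$, which both pins $S$ down further and cuts the range of $a$ compatibly with $|S(I)|=\ell$, and the combinatorial factors produced by enumerating $(a,S)$ then combine with the Claim~\ref{claim:HJI} saving to deliver the required $(1/2^\ell)\binom{n/2}{m}$ bound.
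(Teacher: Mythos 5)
Your opening reductions match the paper exactly: Proposition~\ref{CEprop} disposes of the sets far from every $B \in \B_n$, and Proposition~\ref{prop:Janson} disposes of those with $1 \le |I \setminus O_n| \le \delta m$, both contributing only $O(n^{-3})\binom{n/2}{m}$. The gap is in the remaining (main) case, where you yourself flag that you do not see how to extract the factor $2^{-\ell}$ and substitute an undeveloped appeal to ``stability of near-extremal sum-free subsets of $B$''. That is not the mechanism, and the localization of $B$ near the top interval is neither needed nor sufficient. The actual source of the $2^{-\ell}$ saving is elementary: writing $t = n/2 - m \le \delta n$, one has for any $c \ge \ell$
$$\binom{n/2 - c}{m - \ell} \,=\, \binom{n/2 - c}{\,t - (c - \ell)\,} \,\le\, \left( \frac{t}{n/2 - t} \right)^{c - \ell} \binom{n/2}{t},$$
and $\binom{n/2}{t} = \binom{n/2}{m}$; so each unit by which $|S'+S'|$ exceeds $\ell$ buys a factor $O(\delta)$. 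After splitting off the case $|S(I) \cap [n/4]| \ge \ell/4$ (killed by the exponential bound of Claim~\ref{claim:HJI}, since then either $I$ has too many elements below $3n/4$ or $k \ge \ell n/16$), one has $|S'| > 3\ell/4$, hence $|S'+S'| \ge 3\ell/2$ and a saving of $(O(\delta))^{\ell/2}$. This works uniformly in $\ell$, including bounded $\ell$ --- your worry there is misplaced, since for bounded $\ell$ the target $2^{-\ell}$ is itself only a constant.

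The second missing ingredient is the enumeration of the candidate sets $S$, which must not swamp the $(O(\delta))^{\ell/2}$ saving. The paper handles this by a dichotomy on $a(I) = n/2 - \min(S(I))$: if $a \le 2\ell$ there are at most $2^{2\ell}$ choices of $S$, and $2^{2\ell}(O(\delta))^{\ell/2} \ll 2^{-\ell}$; if $a \ge 2\ell$ one needs the further observation that $n/2 - a \in I$ forces $I$ to omit at least $a$ elements of $\{n/2+1,\ldots,n/2+a\} \cup \{n-a+1,\ldots,n\}$, so that $I$ is determined by a subset of a $3a$-element set together with at least $m - 3a/2$ elements of $\{n/2+a+1,\ldots,n-a\}$, giving $2^{3a}(t/(n/2-t))^{a/2}\binom{n/2}{t} \le 2^{-\ell-1}\binom{n/2}{m}$. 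Your proposal contains neither the binomial identity nor any device for controlling the number of sets $S$ with small $\min(S)$, so as written it does not establish the claim.
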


\begin{proof}[Proof of Claim~\ref{almosthalf}]
The result is trivial if $\ell = 0$, so let us assume that $\ell \ge 1$. Recall that $n/2 \ge m \ge C^{1/3} \sqrt{n}$, and hence $m \ge n/3 \ge C^{1/4} \sqrt{n \log n}$,  and that $\delta > 0$ is sufficiently small and $C = C(\delta) > 0$ is sufficiently large. Thus, by Proposition~\ref{CEprop}, there exists $\eps = \eps(\delta) > 0$ such that all but $2^{-\eps m} {n/2 \choose m}$ sum-free $m$-sets $I \subset [n]$ satisfy either $| I \setminus O_n | \le \delta m$, or $| I \setminus B | \le \delta m$ for some interval $B$ of length $n/2$. Moreover, by Proposition~\ref{prop:Janson} there are at most $\frac{1}{n^3} {n/2 \choose m}$ sum-free $m$-sets $I \subset [n]$ such that $1 \le | I \setminus O_n | \le \delta m$. It thus suffices to count sum-free $m$-sets $I \subset [n]$ such that $| I \setminus B | \le \delta m$ for some interval $B$ of length $n/2$.  

We shall divide into two cases, depending on the size of $S(I) \cap [n/4]$.

\bigskip
\noindent \textbf{Case 1:} $\big| S(I) \cap [n/4] \big| \ge \ell / 4$.
\medskip

Suppose $| I \setminus B | \le \delta m$ for some interval $B$ of length $n/2$. If $\big| S(I) \cap [n/4] \big| \ge \ell/4 \ge \delta m$, it follows that $I$ must contain at least $(1 - \delta) m >  \lceil 3n / 8 \rceil$ elements less than $3n/4$, which is impossible since $I$ is sum-free. So assume that $\ell \le 4\delta m$ and note that, by our assumption on $| S(I) \cap [n/4] |$, we have $S(I) \in \S(k,\ell)$ for some $k \ge \ell n / 16$. Thus, by Claim~\ref{claim:HJI}, and setting $\gamma = 10^{-3}$, there are at most
\begin{equation}\label{eqC1}
\sum_{k \ge \ell n / 16} \sum_{S \in \S(k,\ell)} C \cdot \max \left\{ e^{-km^2 / 2n^2}, e^{-km / 8n\ell} \right\} {n/2 \choose m - \ell} \, \le \, n^3 {n/2 \choose \ell} e^{-\gamma n} {n/2 \choose m - \ell}
\end{equation}
sum-free $m$-sets $I \subset [n]$ with $|S(I)| = \ell$, $\big| S(I) \cap [n/4] \big| \ge \ell/4$ and $| I \setminus B | \le \delta m$ for some interval $B$ of length $n/2$. Since $\ell \le 4\delta m$, $\delta > 0$ is sufficiently small, and (trivially) ${n/2 \choose m - \ell} \le {n/2 \choose \ell} {n/2 \choose m}$, this is at most $\frac{1}{n^3} {n/2 \choose m}$, as required.

\bigskip
\noindent \textbf{Case 2:} $|S(I) \cap [n/4]| < \ell/4$.
\medskip

Fix some set $S \subset [n/2]$ with $|S| = \ell$ and such that $S' = S \setminus [n/4]$ has more than $3\ell/4$ elements. Then $|S' + S'| \ge 3\ell/2$ and hence, using~\eqref{trivial2}, it is easy to see that
\begin{equation}\label{eqC2} 
{n/2 - |S'+S'| \choose m - \ell} \, \le \, {n/2 - 3\ell/2 \choose m - \ell} \, = \, {n/2 - 3\ell/2 \choose t - \ell/2} \, \le \, \left( \frac{t}{n/2 - t} \right)^{\ell/2} {n/2 \choose t},
\end{equation}
where $t = n/2 - m \le \delta n$. By Claim~\ref{claim:HJI}, the right-hand side of~\eqref{eqC2} is an upper bound on the number of sum-free $m$-sets $I \subset [n]$ with $S(I) = S$.  

Let us first count sets $I$ such that $\min(I) \ge n/2 - 2\ell$, i.e., such that $a(I) := n/2 - \min( S(I) ) \le 2\ell$. Then there are at most $2^{2\ell}$ choices for the set $S(I)$, and so, by~\eqref{eqC2}, there are at most
$$2^{2\ell} {n/2 - |S'+S'| \choose m - \ell} \, \le \, \left( \frac{16t}{n/2 - t} \right)^{\ell/2} {n/2 \choose t} \, \le \, \frac{1}{2^{\ell+1}} {n/2 \choose m}$$
sum-free $m$-sets $I \subset [n]$ with $|S(I)| = \ell$ and $a(I) \le 2\ell$. 

Now, let us count sets $I$ such that $\min(I) \ge n/2 - 2\ell$, i.e., such that $a = a(I) \ge 2\ell$. Observe that, since $n/2 - a \in I$, then $I$ contains at most $a$ elements of the set $\{n/2+1,\ldots,n/2+a\} \cup \{n-a+1,\ldots,n\}$. Thus $I$ contains at least $m - a - \ell \ge m - 3a/2$ elements of $\{n/2 + a + 1,\ldots,n-a\}$. The remaining elements are contained in a set of size $3a$, and thus, by~\eqref{trivial2}, there are at most
$$2^{3a} {n/2 - 2a \choose m - 3a/2} \, = \,  2^{3a} {n/2 - 2a \choose t - a/2} \, \le \, 2^{3a} \left( \frac{t}{n/2 - t} \right)^{a/2} {n/2 \choose t} \, \le \, \frac{1}{2^{\ell+1}} {n/2 \choose m}$$
sum-free $m$-sets $I$ with $a(I) \ge 2\ell$, where again $t = n/2 - m \le \delta n$. Summing over the various cases, the claim follows.
\end{proof}
 
From now on we shall assume that $n - 2m \ge 2\delta n$. Recall that Claim~\ref{claim:HJI} allows us to count sum-free sets with at most $\delta m$ elements less than $n/2$. We shall use the induction hypothesis to count the sets $I$ that have more than $\delta m$ elements in $[n/2]$.

\begin{cclaim}\label{fewsmall}
There are at most $\delta \cdot 2^{Cn/m} {n/2 \choose m}$ sum-free $m$-sets $I \subset [n]$ with at least $\delta m$ elements less than $n/2$. 
\end{cclaim}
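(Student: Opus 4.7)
The plan is to prove Claim~\ref{fewsmall} by strong induction on $n$, assuming Theorem~\ref{CEthm} (and hence the already-established Claims~\ref{claim:HJI} and~\ref{almosthalf}) for every $n'<n$. Throughout, fix $C=C(\delta)$ sufficiently large, and recall the standing assumption $n-2m\ge 2\delta n$.

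The first step is a pair of easy reductions. Applying Proposition~\ref{CEprop} with $\delta/2$ in place of $\delta$, all but at most $2^{-\eps m}\binom{n/2}{m}$ sum-free $m$-subsets $I$ of $[n]$ satisfy $|I\setminus B|\le\delta m/2$ for some $B\in\B_n$. Since $m\ge C^{1/3}\sqrt{n}$, this exceptional count is at most $(\delta/3)\cdot 2^{Cn/m}\binom{n/2}{m}$ provided $C$ is chosen large in $\delta$. Proposition~\ref{prop:Janson2} then bounds the sum-free $m$-sets $I$ with $|I\setminus O_n|\le\delta m/2$ by $2^{O(n/m)}\binom{n/2}{m}$, which is similarly at most $(\delta/3)\cdot 2^{Cn/m}\binom{n/2}{m}$ for $C$ large in $\delta$.

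It remains to handle sum-free $I$ with $|S(I)|\ge\delta m$ and $|I\setminus B|\le\delta m/2$ for some interval $B=\{a+1,\dots,a+n/2\}$ with $0\le a\le n/2$. The extreme case $a=n/2$ is excluded by the hypothesis $|S(I)|\ge\delta m$, since then $|S(I)|=|I\setminus B|\le\delta m/2<\delta m$. For $0\le a<n/2$, set $n':=a+n/2\in[n/2,n-1]$ and $I':=I\cap[n']$; then $I'$ is sum-free in $[n']$ of some size $m'\in[(1-\delta/2)m,m]$, and $I\setminus I'\subset(n',n]$. By the inductive hypothesis, the number of sum-free $m'$-subsets of $[n']$ is at most $2^{Cn'/m'}\binom{n'/2}{m'}$, and each such $I'$ extends to $I$ in at most $\binom{n/2-a}{m-m'}$ ways. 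Summing over $a$ and $m'$ yields the upper bound
\[
\sum_{a=0}^{n/2-1}\,\sum_{m'=\lceil(1-\delta/2)m\rceil}^{m}\, 2^{C(a+n/2)/m'}\binom{(a+n/2)/2}{m'}\binom{n/2-a}{m-m'}.
\]

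The main obstacle is verifying that this double sum is at most $(\delta/3)\cdot 2^{Cn/m}\binom{n/2}{m}$. The computation rests on two observations: first, $2^{C(a+n/2)/m'}\le 2^{Cn/m\cdot(1+O(\delta)+O(a/n))}$ for $m'\ge(1-\delta/2)m$, so the inductive factor is at most $2^{Cn/m}$ times a tame correction; second, the binomial ratio $\binom{(a+n/2)/2}{m'}\binom{n/2-a}{m-m'}/\binom{n/2}{m}$ decays (super-)exponentially in $n/2-a$ and in $m-m'$, by the inequalities~(\ref{trivial}) and~(\ref{trivial2}) of Section~\ref{GenThmSec}. This exponential decay absorbs both the $O(n)$ summation over $a$ and the inductive factor. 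The delicate subcase is $a=0$ with $m'$ close to the maximum sum-free density $n/4$, where the crude Theorem~\ref{CEthm} bound at scale $n/2$ is insufficient; there, we appeal instead to the inductively-established Claim~\ref{almosthalf} at scale $n/2$, which yields the sharper bound $(2^{-\ell''}+n^{-2})\binom{n/4}{m'}$ in the dense range $m'\ge(1/2-\delta)(n/2)$ and closes the estimate. Combining the three contributions gives the desired bound $\delta\cdot 2^{Cn/m}\binom{n/2}{m}$.
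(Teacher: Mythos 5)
Your opening reductions via Propositions~\ref{CEprop} and~\ref{prop:Janson2} match the paper's, but the main step --- a union bound over the position $a$ of the interval $B$ followed by induction at scale $n'=a+n/2$ --- contains a genuine quantitative gap. For each $a$ your bound counts \emph{all} sum-free $m'$-subsets of $[n']$, not just those actually concentrated in $B=\{a+1,\dots,a+n/2\}$ with at least $\delta m$ elements below $n/2$; these families are nested as $a$ grows, so summing over $a$ over-counts badly. Concretely, look at the terms with $m'=m$ (extension factor $1$). The hypothesis $|S(I)|\ge \delta m$ only forces $j:=n/2-a\ge \delta m/2$, so $a$ still ranges over $\Theta(n)$ values, and each term equals $2^{Cn'/m}\binom{n'/2}{m} = 2^{Cn/m}2^{-Cj/m}\binom{n/2-j/2}{m}$. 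By~\eqref{trivial} the binomial ratio $\binom{n/2-j/2}{m}/\binom{n/2}{m}$ is $e^{-\Theta(jm/n)}$, so the decay per unit of $j$ is only $e^{-\Theta(m/n+C/m)}$, not super-exponential: the sum over $j$ contributes a factor of order $\min\{n/m,\,m/C\}$, while the head at $j=\delta m/2$ is only $2^{-\delta C/2}e^{-O(\delta m^2/n)}$. When $m=\Theta(\sqrt{n})$ (the bottom of the range where the claim is needed) the head is a constant depending on $C$ and $\delta$, and the $m'=m$ terms alone already sum to $\Omega_{C,\delta}(\sqrt{n})\cdot 2^{Cn/m}\binom{n/2}{m}$, which is not $O(\delta)\cdot 2^{Cn/m}\binom{n/2}{m}$. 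The corrections for $m'<m$ only make things worse, since $2^{Cn'/m'}$ can exceed $2^{Cn/m}$ by $2^{\Theta(\delta Cn/m)}$.

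The paper avoids this by never summing over the position of $B$. It restricts to the class $\X$ of sets with $|I\setminus B|\le\delta^3 m$ for some length-$n/2$ interval $B$ and splits according to whether $I$ has more than $\delta^3 m$ elements below $n/2-2\delta^2 n$. In the first case it applies the induction hypothesis \emph{once}, at scale $n-2\delta^2 n$ --- a constant proportion below $n$ --- so that $\binom{n/2-\delta^2 n}{m-s}\le(1-2\delta^2)^{m-s}\binom{n/2}{m-s}$ yields a genuine $e^{-\Theta(\delta^2 m)}$ saving, enough to absorb the inductive factor $2^{Cn/m}$ and all remaining sums. In the second case the small elements of $I$ are concentrated in $\{n/2-2\delta^2 n,\dots,n/2\}$, and the paper exploits sum-freeness directly through the first bound of Claim~\ref{claim:HJI} together with $|S'+S'|\ge 2|S'|-1$, obtaining $e^{-\delta m}\binom{n/2}{m}$. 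To salvage your scheme you would need either to induct only at a scale bounded away from $n$ by $\Theta(n)$, or to count, for each $a$, only those sets genuinely tied to $B_a$ --- which requires an input like Claim~\ref{claim:HJI} (i.e., sum-freeness of $I$ inside $[n]$, not merely of $I\cap[n']$ inside $[n']$), rather than the bare inductive count of all sum-free sets in $[n']$.
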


\begin{proof}[Proof of Claim~\ref{fewsmall}]
Recall that $m \ge C^{1/3} \sqrt{n}$ and that $C = C(\delta) > 0$ is sufficiently large. Thus, by Proposition~\ref{CEprop}, there exists $\eps = \eps(\delta) > 0$ such that all but $2^{-\eps m} {n/2 \choose m}$ sum-free $m$-sets $I \subset [n]$ satisfy either $| I \setminus O_n | \le \delta m$, or $| I \setminus B | \le \delta m$ for some interval $B$ of length $n/2$. Moreover, since $\delta > 0$ is sufficiently small, by Proposition~\ref{prop:Janson2} there are at most $2^{Cn/2m} {n/2 \choose m}$ sum-free $m$-sets $I \subset [n]$ with $|I \setminus O_n| \le \delta m$. We may therefore restrict our attention to the collection $\X$ of sum-free $m$-sets $I \subset [n]$ that satisfy $| I \setminus B | \le \delta^3 m$ for some interval $B$ of length $n/2$. 

First, we shall show that there are only few sets in $\X$ which contain more than $\delta^3 m$ elements less than $n/2 - 2\delta^2 n$. Indeed, such a set contains at most $\delta^3 m$ elements of the interval $\{n - 2\delta^2n + 1,\ldots,n\}$ and hence, by the induction hypothesis and~\eqref{trivial2}, there are at most
$$2^{C(n - \delta^2 n)/(m-s)} {n/2 - \delta^2 n \choose m - s} {2\delta^2 n \choose s} \, \le \, 2^{Cn/m} \big( 1 - 2\delta^2 \big)^{m-s} \left( \frac{2m}{n-2m} \right)^s \left( \frac{2e\delta^2 n}{s} \right)^s {n/2 \choose m}$$
such sets with $s \le \delta^3 m$ elements greater than $n - 2\delta^2n$. Summing over $s$, and recalling that $n - 2m \ge \delta n$, it follows that there are at most
\begin{equation}\label{eq:C3}
\sum_{s = 0}^{\delta^3 m} 2^{Cn/m} e^{- 2\delta^2(m-s)} \left( \frac{2m}{\delta n} \cdot \frac{2e\delta^2 n}{s} \right)^s {n/2 \choose m} \, \le \, \delta^3 m \cdot e^{- \delta^2 m} \left( \frac{4e}{\delta^2} \right)^{\delta^3 m} 2^{Cn/m} {n/2 \choose m}
\end{equation}
such sets, which is at most $\delta^2 \cdot 2^{Cn/m} {n/2 \choose m}$, since $\delta > 0$ was chosen sufficiently small and $m \ge C^{1/3}$ is sufficiently large.

It only remains to count the sets in $\X$ which contain at least $\delta m - \delta^3 m > \delta m / 2$ elements of the interval $\{n/2 - 2\delta^2 n, \ldots,n/2\}$, and at most $\delta^3 m$ elements less than $n/2 - 2\delta^2 n$. Note that $m \le 5\delta n$ (else there are no such sets), and so by~\eqref{trivial2} we have 
\begin{equation}\label{C3triv}
{n/2 - \delta m \choose m - a - b} \, \le \, \left( \frac{n/2 - \delta m}{n/2} \right)^{m-a-b} \left( \frac{m}{n/2 - m} \right)^{a+b} {n/2 \choose m} \, \le \,  \left( \frac{3m}{n} \right)^{a+b} {n/2 \choose m}.
\end{equation} 
Now, $|S+S| \ge 2|S| - 1$ for every $S \subset \ZZ$ and so, by Claim~\ref{claim:HJI}, there are at most
$$\sum_{a = 0}^{\delta^3 m} \sum_{b = \delta m / 2}^{m-a} {n/2 \choose a} {2\delta^2 n+1 \choose b} {n/2 - \delta m \choose m - a - b}$$
such sum-free sets. But by~\eqref{C3triv}  this is at most
$$\sum_{a = 0}^{\delta^3 m} \sum_{b = \delta m/2}^{m-a} \left( \frac{3em}{2a} \right)^a \left( \frac{6e\delta^2 m}{b} \right)^b {n/2 \choose m} \, \le \, m^2 \left( \frac{3e}{2\delta^3} \right)^{\delta^3 m} \big( 12e\delta \big)^{\delta m / 2} {n/2 \choose m} \, \le \, e^{-\delta m} {n/2 \choose m},$$
so this completes the proof of the claim.
\end{proof}

From now on, we may restrict our attention to those sum-free subsets $I \subset [n]$ for which $|S(I)| \le \delta^3 m$. 
The remainder of the proof involves some careful counting using Theorems~\ref{S+S} and~\ref{S+S2} and Lemma~\ref{parts}. We shall break up the calculation into three claims. In the first two, which are fairly straightforward, we count the sets $I$ for which $|S(I)|$ is small (Claim~\ref{ellsmall}) or $\sum_{a \in S(I)} (n/2 - a)$ is large (Claim~\ref{kbig}). Finally in Claim~\ref{end}, which is much more delicate, we count the remaining sets.

\begin{cclaim}\label{ellsmall}
If $\ell \le (2C\delta)n/m$, then there are at most $2^{C n/2m} {n/2 \choose m}$ sum-free $m$-sets $I \subset [n]$ with $|S(I)| = \ell$. 
\end{cclaim}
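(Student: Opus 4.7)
The plan is to fix $\ell \ge 1$ (the case $\ell = 0$ being trivial, as the only possible $I$ is any $m$-subset of $(n/2, n]$) and partition the sum-free $m$-sets $I$ with $|S(I)| = \ell$ according to the value $k := \sum_{a \in S(I)}(n/2 - a)$, so that $S(I) \in \S(k, \ell)$, and then apply Claim~\ref{claim:HJI}.

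First I would observe that the hypothesis $\ell \le (2C\delta)n/m$ collapses the maximum in Claim~\ref{claim:HJI} to a single exponential. Setting $\eta := 1/(16C\delta)$ (assuming, as we may, that $C$ is large enough that $\eta \le 1/2$), the hypothesis gives $n/(8m\ell) \ge \eta$, whence
$$\max\bigl\{ e^{-km^2/(2n^2)},\, e^{-km/(8n\ell)} \bigr\} \,\le\, e^{-\eta km^2/n^2}.$$
Next I would bound $|\S(k,\ell)| \le p^*_\ell(k+\ell) \le (e^2(k+\ell)/\ell^2)^\ell$ via Lemma~\ref{parts}, using that $a \mapsto (n/2-a)+1$ puts $\S(k,\ell)$ in bijection with the $\ell$-subsets of $\{1,\ldots,n/2\}$ summing to $k+\ell$. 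Summing the bound from Claim~\ref{claim:HJI}, substituting $j=k+\ell$, and invoking~\eqref{eq:kmax} for $\sum_j j^\ell e^{-(\eta m^2/n^2)j}$ then yields
$$\sum_{k \ge 0} |\S(k,\ell)|\, e^{-\eta km^2/n^2} \,\le\, C_1 \cdot \frac{\ell n^2}{\eta m^2} \cdot \left(\frac{e^{O(1)} n^2}{\ell \eta m^2}\right)^{\!\ell},$$
with the factor $e^{\eta \ell m^2/n^2} \le e^{\ell/8}$ absorbed inside the parentheses.

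Combining with Claim~\ref{claim:HJI} and the estimate ${n/2 \choose m-\ell} \le (m/(\delta n))^\ell {n/2 \choose m}$ from~\eqref{trivial} (valid because the running assumption $n - 2m \ge 2\delta n$ gives $n/2 - m \ge \delta n$) produces
$$N(\ell) \,\le\, C_2 \cdot \frac{\ell n^2}{\eta m^2} \cdot \left(\frac{e^{O(1)}}{\eta\delta \cdot \ell m/n}\right)^{\!\ell} {n/2 \choose m}.$$
To verify this is at most $2^{Cn/(2m)} {n/2 \choose m}$, I would reparametrise via $x := \ell m / n \in (0, 2C\delta]$. Since $1/(\eta\delta) = 16C$, the $\ell$-th power becomes $(O(C)/x)^{xn/m}$, whose logarithm equals $(n/m) \cdot x\log_2(O(C)/x)$. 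The function $g(x) := x\log_2(A/x)$ with $A = O(C)$ is increasing on $(0, A/e]$, and since $2C\delta \ll A/e = O(C)$ for $\delta$ small, its maximum on $(0, 2C\delta]$ is attained at $x = 2C\delta$, with value $2C\delta \log_2(O(1)/\delta)$. Choosing $\delta$ small enough that $8\delta\log_2(1/\delta) \le 1$ makes this at most $C/4$, so the exponential factor is at most $2^{Cn/(4m)}$. The polynomial prefactor $\ell n^2/(\eta m^2) \le 32C^2\delta^2 (n/m)^3$ contributes $O(\log C + \log(n/m))$ in the exponent, which is at most $Cn/(4m)$ for $C$ sufficiently large, because $\log_2(n/m)/(n/m) \le 1/2$ when $n/m \ge 2$ (ensured by $m \le (1/2-\delta)n$).

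The main obstacle is the careful bookkeeping needed to fit the exponential factor, the polynomial prefactor, and the absolute constants simultaneously inside the budget $2^{Cn/(2m)}$, which is only $\sim 2^C$ at the extreme $m \approx (1/2 - \delta)n$ — this is precisely what forces $\delta$ to be chosen small and $C$ to be chosen large in the setup of the proof.
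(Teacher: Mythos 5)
Your proof is correct and follows essentially the same route as the paper's: bound the contribution of each $S\in\S(k,\ell)$ via Claim~\ref{claim:HJI}, count $\S(k,\ell)$ with Lemma~\ref{parts}, sum over $k$ using~\eqref{eq:kmax}, and convert ${n/2\choose m-\ell}$ to ${n/2\choose m}$ via~\eqref{trivial}. The only difference is cosmetic: the paper splits into the two regimes $\ell\le n/4m$ and $\ell\ge n/4m$ according to which exponential in the maximum dominates, whereas you replace the maximum by the single (weaker but still sufficient) bound $e^{-\eta km^2/n^2}$ with $\eta=1/(16C\delta)$ and optimize once over $x=\ell m/n$.
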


\begin{proof}[Proof of Claim~\ref{ellsmall}]
Let $k \in \N$ and fix a set $S \in \S(k,\ell)$. By Claim~\ref{claim:HJI}, there are at most
\begin{equation}\label{eq:C4}
C \cdot \max \left\{  e^{-km^2 / 2n^2}, e^{-km / 8n\ell} \right\} {n/2 \choose m - \ell}
\end{equation}
sum-free $m$-sets $I \subset [n]$  with $S(I) = S$. Recall that $\ell \le \delta^3 m$ and $n - 2m \ge \delta n$, and suppose first that $e^{-km^2 / 2n^2} \ge e^{-km / 8n\ell}$, i.e., that $\ell \le n/4m$. By Lemma~\ref{parts}, there are at most $\big( \frac{e^2k}{\ell^2} \big)^\ell$ choices for $S$,  and hence, using~\eqref{trivial}, we can bound the number of sets $I$ as follows:
\begin{equation}\label{C4.1}
\sum_k \sum_{S \in \S(k,\ell)} C \cdot e^{-km^2 / 2n^2} {n/2 \choose m - \ell} \, \le \, \sum_{k} C \left( \frac{e^2k}{\ell^2} \right)^\ell e^{-km^2 / 2n^2}  \left( \frac{2m}{n - 2m} \right)^\ell {n/2 \choose m}.
\end{equation}
Now, using~\eqref{eq:kmax} to bound the sum over $k$, this is at most
$$C^2 \left( \frac{e^2}{\ell^2} \cdot \frac{2m}{\delta n} \right)^\ell  \left( \frac{2n^2 \ell}{em^2} \right)^{\ell+1} {n/2 \choose m}\, \le \, \left( \frac{Cn}{m} \right)^3 \left( \frac{4en}{\delta m \ell} \right)^\ell  {n/2 \choose m} \, \le \, 2^{Cn/3m} {n/2 \choose m},$$
where in the last two steps we used the bound $\ell \le n/4m$.

Suppose next that $e^{-km^2 / 2n^2} \le e^{-km / 8n\ell}$, i.e., that $n/4m \le \ell \le (2C\delta) n/m$. The calculation is almost the same:
\begin{eqnarray}\label{C4.2}
\sum_k \sum_{S \in \S(k,\ell)} C \cdot e^{-km / 8n\ell}  {n/2 \choose m - \ell}  & \le & \sum_{k} C \left( \frac{e^2k}{\ell^2} \right)^\ell e^{-km / 8n\ell}  \left( \frac{2m}{n - 2m} \right)^\ell {n/2 \choose m}  \nonumber \\
& \le & \left( \frac{Cn}{m} \right)^3 \left( \frac{4e}{\delta} \right)^\ell  {n/2 \choose m} \, \le \, 2^{Cn/3m} {n/2 \choose m},
\end{eqnarray}
where we again used Lemma~\ref{parts},~\eqref{trivial},~\eqref{eq:kmax} and the bound $\ell \le (2C\delta)n/m$.
\end{proof}

\begin{cclaim}\label{kbig}
If $\ell \ge n/4m$, then there are at most $e^{-\ell} {n/2 \choose m}$ sum-free $m$-sets $I \subset [n]$ such that $S(I) \in \S(k,\ell)$ for some $k \ge \ell^2 n / \delta m$.
\end{cclaim}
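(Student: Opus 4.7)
My plan is to apply Claim~\ref{claim:HJI} in combination with Lemma~\ref{parts} and the bound~\eqref{eq:kmax}, while carefully exploiting that the hypothesis $k \ge k_0 := \ell^2 n/(\delta m)$ places $k$ well beyond the mode of the sum that will arise. First, I will fix $S \in \S(k,\ell)$ for some $k \ge k_0$. Since $\ell \ge n/(4m)$, the inequality $km/(8n\ell) \le km^2/(2n^2)$ holds, so the maximum in Claim~\ref{claim:HJI} is realised by the weaker exponential $e^{-km/(8n\ell)}$. Combining Claim~\ref{claim:HJI} with the bound $|\S(k,\ell)| \le (e^2 k/\ell^2)^\ell$ from Lemma~\ref{parts} and with $\binom{n/2}{m-\ell} \le (2m/(\delta n))^\ell \binom{n/2}{m}$ from~\eqref{trivial} (using $n - 2m \ge 2\delta n$), the total count of interest is at most
\[
O(1) \cdot \left( \frac{2e^2 m}{\delta n \ell^2} \right)^{\ell} \sum_{k \ge k_0} k^\ell e^{-bk} \cdot \binom{n/2}{m}, \qquad \text{where } b = \frac{m}{8n\ell}.
\]

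The crucial observation is that $b k_0 = \ell/(8\delta)$, which for $\delta$ small is vastly larger than $\ell$, so $k_0$ lies far past the mode $\ell/b$ of the summand. Consequently the summand decays geometrically on $[k_0,\infty)$: the ratio of consecutive terms is at most $\exp(\ell/k_0 - b) = \exp\bigl( -(m/n\ell)(1/8 - \delta) \bigr) < 1$, so the tail sum is bounded by $O(n\ell/m) \cdot k_0^\ell e^{-b k_0} = O(n\ell/m) \cdot (\ell^2 n/(\delta m))^\ell e^{-\ell/(8\delta)}$. Substituting this back, the $(\ell^2 n/(\delta m))^\ell$ factor cancels cleanly against $(2 e^2 m/(\delta n \ell^2))^\ell$, leaving a total bound of
\[
O(n\ell/m) \cdot \left( \frac{2 e^2}{\delta^2} \right)^{\ell} e^{-\ell/(8\delta)} \binom{n/2}{m}.
\]

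To finish, I will take $\delta$ small enough that $\log(2 e^2 / \delta^2) \le 1/(8\delta) - 2$, which forces $(2 e^2/\delta^2)^\ell e^{-\ell/(8\delta)} \le e^{-2\ell}$; the remaining prefactor $O(n\ell/m) = O(\ell^2)$ (since $n/m \le 4\ell$) is absorbed into one of the factors of $e^\ell$, giving the desired upper bound of $e^{-\ell}\binom{n/2}{m}$. The one subtle point — and essentially the only obstacle — is that naively applying~\eqref{eq:kmax} to the unrestricted sum over $k$ would give a tail estimate of order $(n\ell^2/m)^{\ell+1}$, which is hopelessly too large; the whole argument hinges on using the hypothesis $k \ge k_0$ to extract the geometric tail factor $e^{-\ell/(8\delta)}$, which is the sole source of the required savings.
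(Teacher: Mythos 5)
Your proposal is correct and follows essentially the same route as the paper: both arguments take the $e^{-km/8n\ell}$ branch of Claim~\ref{claim:HJI} (valid since $\ell \ge n/4m$), combine it with Lemma~\ref{parts} and~\eqref{trivial}, and then replace the naive application of~\eqref{eq:kmax} by the observation that $k_0 = \ell^2 n/\delta m$ lies far beyond the mode of $k^\ell e^{-bk}$, so the tail sum is $O(n\ell/m)\,k_0^\ell e^{-bk_0}$, yielding the factor $(2e^2/\delta^2)^\ell e^{-\ell/8\delta} \le e^{-\ell}$ for $\delta$ small. Your closing remark about why the unrestricted sum would fail matches the paper's own motivation for introducing~\eqref{newkmax} in place of~\eqref{eq:kmax}.
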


\begin{proof}[Proof of Claim~\ref{kbig}]
The calculation is similar to that in the previous claim. Indeed, note that~\eqref{eq:C4} still holds, and that our assumption that $\ell \ge n/4m$ implies that $e^{-km^2 / 2n^2} \le e^{-km / 8n\ell}$. In place of~\eqref{eq:kmax}, we shall use the inequality
\begin{equation}\label{newkmax}
\sum_{k = \ell^2 n / \delta m}^\infty \left( \frac{e^2k}{\ell^2} \cdot \frac{2m}{\delta n} \right)^\ell e^{- km / 8n\ell} \, \le \, \frac{16n\ell}{m} \left( \frac{2e^2}{\delta^2} \right)^\ell e^{- \ell / 8\delta} \, \le \, e^{-\ell},
\end{equation}
which holds since $g(x) = x^a e^{-bx}$ is decreasing on $[a/b,\infty)$ and $g(x + 1/b) < g(x)/2$ if $x > 3a/b$. (Note that we have $\ell^2 n / \delta m > 3\ell \cdot 8n\ell / m$ since $\delta > 0$ is sufficiently small.) Since $|\S(k,\ell)| \le \big( \frac{e^2k}{\ell^2} \big)^\ell$ by Lemma~\ref{parts}, it follows from~\eqref{newkmax} and~\eqref{trivial} that
$$\sum_{k \ge \ell^2 n / \delta m} \sum_{S \in \S(k,\ell)} {n/2 \choose m - \ell} e^{-km / 8n\ell} \, \le \,  \sum_{k \ge \ell^2 n / \delta m}  \left( \frac{e^2k}{\ell^2} \cdot \frac{2m}{\delta n} \right)^\ell e^{-km / 8n\ell} {n/2 \choose m}  \, \le \, e^{-\ell} {n/2 \choose m}.$$
By Claim~\ref{claim:HJI}, this is an upper bound on the number of sum-free $m$-sets $I \subset [n]$  such that $S(I) \in \S(k,\ell)$ for some $k \ge \ell^2 n / \delta m$, as required.
\end{proof}

The following claim now completes the proof of Theorem~\ref{CEthm}. 

\begin{cclaim}\label{end}
If $k \le \ell^2 n / \delta m$ and $\ell \ge (2C\delta) n/m$, then there are at most 
$$\left( 2^{O(\delta \ell )} \left( \frac{3}{2\sqrt{e}} \right)^\ell  + e^{-\delta m} \right) {n/2 \choose m}$$ 
sum-free $m$-sets $I \subset [n]$ with $S(I) \in \S(k,\ell)$. 
\end{cclaim}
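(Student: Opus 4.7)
I would combine the bound $\binom{n/2 - |S'+S'|}{m-\ell}$ from Claim~\ref{claim:HJI} for the number of sum-free $m$-sets $I$ with $S(I)=S$ with the counts of $S\in\S(k,\ell)$ having prescribed $|S+S|$ furnished by Theorems~\ref{S+S} and~\ref{S+S2}, then sum over the strata. To apply Theorem~\ref{S+S}, note that the hypotheses $\ell \ge (2C\delta)n/m$ and $k\le \ell^2 n/(\delta m)$ together give $\ell^3 \ge \ell^2\cdot(2C\delta)n/m \ge 2C\delta^2 k$, which exceeds the threshold of Theorem~\ref{S+S} once $C=C(\delta)$ is taken large enough. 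Under the bijection $a\mapsto n/2-a$, which sends $\S(k,\ell)$ to the $\ell$-element subsets of $\N$ with sum $k$ and preserves the sumset size, setting $c:=\lambda\ell^2/k$ in Theorem~\ref{S+S} shows that for each $\lambda \ge 2$ at most $2^{\delta\ell}(2e\lambda/3)^\ell$ sets $S\in\S(k,\ell)$ satisfy $|S+S|\le\lambda\ell$. When $m$ is comparable to $n/2$ (so that $\binom{n/2-\mu\ell}{m-\ell}$ degenerates below), the analogue hypothesis $k\le \ell^2/\delta'$ with $\delta':=\delta m/n$ holds, so Theorem~\ref{S+S2} yields the sharper count $2^{O(\delta\ell)}((4\lambda-3)e/6)^\ell$.

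The principal technical subtlety is that Claim~\ref{claim:HJI} involves $|S'+S'|$ rather than $|S+S|$, where $S'=S\cap(n/4,n/2]$. I would split on $|S\setminus S'|$. If $|S\setminus S'|\le\delta^2\ell$, the inclusion $(S+S)\setminus(S'+S')\subseteq(S\setminus S')+S$ gives $|S'+S'|\ge|S+S|-\delta^2\ell^2$, so $|S'+S'|\ge(1-O(\delta))|S+S|$ whenever $|S+S|\ge\delta\ell^2$. If instead $|S\setminus S'|>\delta^2\ell$, then each of the corresponding elements lies in $[1,n/4]$ and contributes at least $n/4$ to $k$, forcing $k\ge\delta^2\ell n/4$; then the Janson bound $Ce^{-km/(8n\ell)}\binom{n/2}{m-\ell}$ from Claim~\ref{claim:HJI} contributes a factor $e^{-\Omega(\delta^2 m)}$ per $S$, which combined with the partition bound $(e^2 k/\ell^2)^\ell$ from Lemma~\ref{parts} and the tail estimate~\eqref{eq:kmax} absorbs these sets into the $e^{-\delta m}\binom{n/2}{m}$ error term (similarly to how Claim~\ref{kbig} was handled).

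In the remaining (principal) case the bound reduces to
\begin{equation*}
2^{O(\delta\ell)}\sum_{\lambda\ge 2}\left(\frac{2e\lambda}{3}\right)^\ell\binom{n/2-(1-O(\delta))\lambda\ell}{m-\ell},
\end{equation*}
with Theorem~\ref{S+S2}'s sharper factor $((4\lambda-3)e/6)^\ell$ replacing the bracketed term when $m$ is close to $n/2$. Expanding $\binom{n/2-\mu\ell}{m-\ell}/\binom{n/2}{m}$ via~\eqref{trivial2} as $\le\exp(-2\mu\ell(m-\ell)/n)(2m/(n-2m))^\ell$, substituting $\mu=(1-O(\delta))\lambda$, and optimizing the one-variable expression in $\lambda$ should produce the desired bound $2^{O(\delta\ell)}(3/(2\sqrt{e}))^\ell\binom{n/2}{m}$. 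The main obstacles I anticipate are (i) carrying out the optimization precisely enough that the constant $3/(2\sqrt{e})$ emerges uniformly across the range $(2C\delta)n/m\le\ell\le\delta^3 m$ and $C^{1/3}\sqrt{n}\le m\le n/2-2\delta n$ — in particular switching between Theorems~\ref{S+S} and~\ref{S+S2} at the correct threshold in $m/n$ — and (ii) verifying that the $O(\delta)$ slack in the $|S+S|$-to-$|S'+S'|$ conversion, together with the approximation errors in~\eqref{trivial2}, genuinely fit inside the factor $2^{O(\delta\ell)}$ rather than contaminating the main constant.
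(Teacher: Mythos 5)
Your overall architecture (Claim~\ref{claim:HJI} plus the sumset-counting theorems plus stratification by sumset size and a final optimization) matches the paper's, and your treatment of the case $|S\setminus S'|>\delta^2\ell$ via the Janson exponent is sound. But the step you yourself flag as the principal subtlety --- converting a lower bound on $|S+S|$ into a lower bound on $|S'+S'|$ --- does not work. The inclusion gives only $|S'+S'|\ge|S+S|-\ell''\ell$, and with your threshold $\ell''\le\delta^2\ell$ this is vacuous unless $|S+S|\gg\delta^2\ell^2$, i.e.\ unless $\lambda\gg\delta^2\ell$. The dominant strata, however, have $\lambda=O(1)$ (indeed $|S+S|\ge 2\ell-1$ always, and the interesting range is $\lambda\le O(n/\delta m)$ or $\lambda\le 1/\delta^3$), while $\ell$ runs up to $\delta^3 m$; so for every $\ell$ larger than a constant depending on $\delta$ your conversion says nothing about exactly the strata that carry the main contribution, and the binomial factor $\binom{n/2-|S'+S'|}{m-\ell}$ loses all of its $\lambda$-dependence, after which the sum over $\lambda$ of $(2e\lambda/3)^\ell$ diverges. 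The paper avoids the conversion entirely: it applies Theorems~\ref{S+S} and~\ref{S+S2} to the set $S'$ itself (with its own parameters $k'=\sum_{a\in S'}(n/2-a)$ and $\ell'=|S'|$), stratifies by $|S'+S'|$ directly, and pays for the $\binom{n/4}{\ell''}$ choices of $S''=S\cap[n/4]$ with a strengthened form of Claim~\ref{claim:HJI} (see~\eqref{eq:case5}), obtained by rerunning the Janson argument on the graph generated by $S''$ alone on the vertex set $\{n/2+1,\ldots,n\}\setminus(S'+S')$, which supplies the extra factor $\max\{e^{-\delta\ell'' m^2/n},e^{-\delta m}\}$.

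Two further points would block the stated constant even if the conversion were repaired. First, you never invoke Freiman's $3k-4$ Theorem, but it is needed: in the regime $m=\Theta(n)$, optimizing $\big(\tfrac{(4\lambda-3)e}{6}\big)^{\ell}\binom{n/2-\lambda\ell}{m-\ell}$ over all $\lambda\ge 2$ gives a constant strictly larger than $3/(2\sqrt{e})$ (the unconstrained maximum sits near $\lambda=\tfrac{n}{2m}+\tfrac34$), and with the weaker count $(2e\lambda/3)^\ell$ from Theorem~\ref{S+S} the resulting constant exceeds $1$ once $m\ge n/6$. The paper therefore treats $2\le\lambda<3(1-\delta)$ separately via the $3k-4$ Theorem (such $S'$ lie in a short arithmetic progression, giving the count $\binom{(\lambda-1+3\delta)\ell}{\ell}$) and only uses Theorem~\ref{S+S2} for $\lambda\ge 3(1-\delta)$, where the boundary case $\lambda=3$, $m=n/4$ produces exactly $3/(2\sqrt{e})$. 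Second, your expansion of the binomial ratio via~\eqref{trivial2} carries the factor $(2m/(n-2m))^{\ell}$, which is of order $\delta^{-\ell}$ when $m$ is close to $(1/2-\delta)n$ and swamps the exponential gain; the paper's Case~4 handles $n/4\le m\le(1/2-\delta)n$ by the substitution $t=n/2-m$ and $\binom{n/2-\lambda\ell}{m-\ell}=\binom{n/2-\lambda\ell}{t-(\lambda-1)\ell}$, which your write-up does not anticipate.
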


This is the most difficult case, and we shall have to count more carefully, using Theorem~\ref{S+S} (in the case $m = o(n)$) and Theorem~\ref{S+S2} (in the case $m = \Theta(n)$). Recall that, given a set $S \subset [n/2]$, we set $S'' = S \cap [n/4]$ and $S' = S \setminus S''$. For simplicity, we shall fix integers $k',\ell' \in \N$ and consider only sets $S \in \S(k,\ell)$ with $|S'| = \ell'$ and 
$$\sum_{a \in S'} \left( \frac{n}{2} - a \right) = k'.$$
Set $\ell'' = \ell - \ell'$ and $k'' = k - k'$, and note that summing over choices of $k'$ and $\ell'$ only costs us a factor of $k\ell = O(\ell^4)$, which is absorbed by the error term $2^{O(\delta\ell )}$. 

\begin{proof}[Proof of Claim~\ref{end}]
We begin by slightly improving the bound in Claim~\ref{claim:HJI}; this will allow us to show that almost every sum-free set $I$ of size $m \ge C\sqrt{n \log n}$ contains no element less than $n/4$. To be precise, we shall show that since $n - 2m \ge \delta n$ and $\ell \le \delta^3 m$, there are at most 
\begin{equation}\label{eq:case5}
C \sum_{S \in \S(k,\ell)} \max \left\{  e^{-\delta \ell'' m^2 / n}, e^{-\delta m} \right\} {n/2 - |S'+S'| \choose m - \ell} \,+\, e^{-\delta m} {n/2 \choose m}
\end{equation}
independent $m$-sets $I \subset [n]$ with $S \in \S(k,\ell)$. 

To prove~\eqref{eq:case5}, we shall partition into two sets by setting
$$\S_1(k,\ell) \, = \, \big\{ S \in \S(k,\ell) \,:\, |S' + S'| \ge \delta n \big\},$$
and $\S_2(k,\ell) = \S(k,\ell) \setminus \S_1(k,\ell)$. By Claim~\ref{claim:HJI}, and using~\eqref{trivial2},  there are at most
$${n/2 \choose \ell} {n/2 - \delta n \choose m - \ell} \, \le \, \big( 1 - 2\delta \big)^{m - \ell} \left( \frac{2m}{\delta n} \right)^\ell \bigg( \frac{en}{2\ell} \bigg)^\ell {n/2 \choose m} \, \le \, e^{-\delta m} {n/2 \choose m},$$
choices for $I$ with $S(I) \in \S_1(k,\ell)$, since $n - 2m \ge \delta n$ and $\ell \le \delta^3 m$. 

Suppose now that $S \in \S_2(k,\ell)$. Similarly as in the proof of Claim~\ref{claim:HJI}, we apply the Hypergeometric Janson Inequality to the graph $G$ with 
$$V(G) = \big\{ n/2+1, \ldots, n \big\} \setminus (S'+S') \quad \text{and} \quad E(G) = \big\{ \{x, x+s\} \colon s \in S'' \big\}.$$
Observe that $k'' \ge \ell'' n / 4$, and hence $G$ has at least 
$$k'' - (2\ell'') |S'+S'| \, \ge \, 2\ell'' \left( \frac{n}{8} - |S'+S'| \right) \, \ge \, \frac{\ell'' n}{8}$$
edges, and maximum degree at most $2\ell''$. Hence, letting $\mu$ and $\Delta$ to be the quantities defined in the statement of Lemma~\ref{HJI}, we have
$$\mu \,\ge\, \frac{\ell'' n}{8} \cdot \frac{(m-\ell)^2}{(n/2)^2} \, \ge \, \frac{\ell'' m^2}{3n}  \quad \text{and} \quad \Delta \,\le\, \frac{n}{2} \cdot \big( 2\ell'' \big)^2 \bigg( \frac{m - \ell}{n/2 - \delta n} \bigg)^3 \le \, \frac{20(\ell'')^2 m^3}{n^2}.$$
Thus $\mu / 2 \ge \delta \ell'' m^2 / n$ and $\mu^2 / 2\Delta \ge \delta m$, and so~\eqref{eq:case5} follows.

In order to complete the calculation, we break into cases according to the order of magnitude of $m$. We begin with the central range.

\bigskip
\noindent \textbf{Case 1:} $C \sqrt{n \log n} \le m \le \delta n$. 
\medskip

For each $c > 0$ let $\S^{(c)}(k,\ell)$ denote the collection of sets $S \in \S(k,\ell)$ with 
$$\frac{ck'}{\ell'} \, \le \, |S'+S'| \, \le \, \frac{(1 + \delta)ck'}{\ell'}.$$
We shall first bound the sum in~\eqref{eq:case5} restricted to $\S^{(c)}(k,\ell)$, for each $c \ge c_0 = \delta^2$, and then sum over choices of $c$. To simplify the calculations, let us also fix $k'$ and $\ell'$, and count only those $S \in \S^{(c)}(k,\ell)$ such that $|S'| = \ell'$ and $\sum_{a \in S'} (n/2 - a) = k'$; as noted above, there are only $O(\ell^4)$ choices for $k'$ and $\ell'$ and this will be absorbed by the error term $2^{O(\delta \ell )}$.

We shall use Theorem~\ref{S+S} to bound the number of sets $S \in \S^{(c)}(k,\ell)$; we may do so since 
$$\frac{(\ell')^3}{k'} \, \ge \, \frac{\ell^3}{2k} \, \ge \, \ell \cdot \frac{\delta m}{2n} \, \ge \, C\delta^2,$$
which follows since $k' \le k \le \ell^2 n / \delta m$ and $(2C\delta) n/m \le \ell \le \delta^3 m$, which together imply that
\begin{equation}\label{eq:ell''}
\ell - \ell' \, = \, \ell''  \, = \, \big| S \cap [n/4] \big| \, \le \, \frac{4k}{n} \, \le \, \frac{4 \ell^2}{\delta m} \, \le \, \delta \ell.
\end{equation}
Thus, by Theorem~\ref{S+S} and~\eqref{trivial2}, it follows that
\begin{equation}\label{case1}
\sum_{S \in \S^{(c)}(k,\ell)} \max \left\{  e^{-\delta \ell'' m^2 / n}, e^{-\delta m} \right\} {n/2 - |S'+S'| \choose m - \ell}
\end{equation}
is at most
\begin{equation}\label{eq:C1.1} 
2^{O(\delta \ell)} \left( \frac{2cek'}{3\ell'^2} \right)^{\ell'} {n/4 \choose \ell''}  \max \left\{  e^{-\delta \ell'' m^2 / n}, e^{-\delta m} \right\} \bigg( \frac{n - 2ck'/\ell'}{n} \bigg)^{m - \ell} \bigg( \frac{2m}{n - 2m} \bigg)^\ell  {n/2 \choose m}.
\end{equation}
Note that we used both the lower bound $|S'+S'| \ge ck' / \ell'$ and the upper bound $|S'+S'| \le (1 + \delta)ck' / \ell'$ from the definition of $\S^{(c)}(k,\ell)$. Since $m \le \delta n$ and $\ell = \ell' + \ell''$, we have
\begin{equation}\label{eq:c13}
\left( \frac{2cek'}{3\ell'^2} \right)^{\ell'} {n/4 \choose \ell''} \bigg( \frac{2m}{n - 2m} \bigg)^\ell \, \le \, 2^{O(\delta \ell)} \left( \frac{4cek'm}{3n\ell'^2} \right)^{\ell'} \bigg( \frac{em}{2\ell''} \bigg)^{\ell''},
\end{equation}
and since $e^{-\delta m^2 / n} \le n^{-C}$ and $\ell'' \le \ell \le \delta^3 m$, we have
\begin{equation}\label{eq:c14}
\bigg( \frac{em}{2\ell''} \bigg)^{\ell''} \max \left\{  e^{-\delta \ell'' m^2 / n}, e^{-\delta m} \right\}  \, \le \, 1.
\end{equation}
Hence~\eqref{case1} is at most
\begin{equation}\label{eq:c12}
2^{O(\delta \ell)} \left( \frac{4cek'm}{3n\ell'^2} \right)^{\ell'} \exp\left( - \frac{2ck'(m - \ell)}{\ell' n} \right) {n/2 \choose m}.
\end{equation}
Finally, using~\eqref{eq:kmax} to sum over $k'$, and summing over $\ell'$, we obtain an upper bound on~\eqref{case1} of 
$$\sum_{\ell'} 2^{O(\delta \ell)} \left( \frac{4cem}{3n\ell'^2} \right)^{\ell'} \left( \frac{\ell'^2 n}{2ce(m - \ell)} \right)^{\ell'+1} {n/2 \choose m} \, \le \, 2^{O(\delta \ell)} \left( \frac{2}{3} \right)^{\ell} {n/2 \choose m},$$
where the error term was able to absorb the extraneous terms because $n/m \le \ell \le \delta^3 m$. 

Applying the above bound on~\eqref{case1} with $c = \delta^2 (1 + \delta)^t$, and summing over integers $0 \le t \le n/\delta^2 m$, we obtain a bound for the sum in~\eqref{eq:case5} over sets $S \in \S(k,\ell)$ such that 
$$\delta^2  \cdot \frac{k'}{\ell'} \, \le \, |S'+S'| \, \le \, \frac{n}{\delta m} \cdot \frac{k'}{\ell'}.$$ 
For those with $|S'+S'| \le \delta^2 k' / \ell'$, the argument above gives an upper bound on~\eqref{case1} of
$$\sum_{\ell'} 2^{O(\delta \ell)} \left( \frac{4\delta^2 ek' m}{3n \ell'^2} \right)^{\ell'} {n/2 \choose m} \, \le \, \big( 4 \delta \big)^\ell {n/2 \choose m}$$
in place of~\eqref{eq:c12}, since we have no lower bound on $|S'+S'|$, and the last inequality follows since $k / \ell^2 \le n/\delta m$. For sets $S \in \S(k,\ell)$ with $|S'+S'| \ge (n / \delta m) \cdot k' / \ell'$, we apply Lemma~\ref{parts} in place of Theorem~\ref{S+S} to obtain, instead of~\eqref{eq:C1.1}, an upper bound of,
$$2^{O(\delta \ell)} \left( \frac{e^2k'}{\ell'^2} \right)^{\ell'} {n/4 \choose \ell''}  \max \left\{  e^{-\delta \ell'' m^2 / n}, e^{-\delta m} \right\} \bigg( 1 - \frac{2 k'}{\delta m \ell'} \bigg)^{m - \ell} \bigg( \frac{2m}{n - 2m} \bigg)^\ell  {n/2 \choose m}.$$
By the same argument as before, this is at most
\begin{equation}\label{eq:S+Sbig}
2^{O(\delta \ell)} \left( \frac{2e^2k'm}{n\ell'^2} \right)^{\ell'} \exp\left( - \frac{2k'(m-\ell)}{\delta m \ell'} \right) {n/2 \choose m} \, \le \, \bigg( \frac{2e^2}{\delta} \bigg)^{\ell'} e^{-\ell' / 3\delta} {n/2 \choose m},
\end{equation}
where the last inequality follows since $k' \le k \le \ell^2 n/\delta m$ (by assumption), $\ell / \ell' \le 2^{O(\delta)}$, $\ell \le \delta^3 m$ and $k' \ge {\ell' \choose 2}$ (otherwise $\S(k',\ell')$ is empty). Summing over $k'$ and $\ell'$, and using the fact that $\ell' \ge (1 - \delta)\ell$, it is easy to see that this is at most $\left( \frac{2}{3} \right)^{\ell} {n/2 \choose m}$, as required.

Putting together the various cases, we see that~\eqref{eq:case5} is bounded above by 
$$2^{O(\delta \ell)} \left( \frac{2}{3} \right)^{\ell} {n/2 \choose m} \,+\, e^{-\delta m} {n/2 \choose m},$$
and since $2/3 < 3 / 2\sqrt{e}$, this proves the claim for $C \sqrt{n \log n} \le m \le \delta n$. 

\medskip
We next observe that the case $m \le C \sqrt{n \log n}$ can be easily reduced to the case above.

\bigskip
\noindent \textbf{Case 2:} $C^{1/3} \sqrt{n} \le m \le C\sqrt{n \log n}$.
\medskip

The proof is the same in Case~1, except for the following step. Instead of the bound $\big( \frac{em}{2\ell''} \big)^{\ell''} \max \big\{  e^{-\delta \ell'' m^2 / n}, e^{-\delta m} \big\} \le 1$, which holds when $m \ge C\sqrt{n \log n}$, we claim that since $m \ge C^{1/3}\sqrt{n}$ we have
\begin{equation}\label{eq:ell''}
\bigg( \frac{em}{2\ell''} \bigg)^{\ell''} \max \left\{  e^{-\delta \ell'' m^2 / n}, e^{-\delta m} \right\} \, \le \, 2^{\delta \ell}.
\end{equation}
Indeed, if $\ell'' \ge n/m$ then the left-hand side of~\eqref{eq:ell''} is at most 1, since $\ell'' \le \delta^3 m$. On the other hand, if $\ell'' \le n/m$ then 
$$\bigg( \frac{em}{2\ell''} \bigg)^{\ell''} \max \left\{  e^{-\delta \ell'' m^2 / n}, e^{-\delta m} \right\} \, = \, \bigg( \frac{em}{2\ell''} \cdot e^{-\delta m^2/n} \bigg)^{\ell''} \le \,  2^{n/2\delta m} \, \le \, 2^{\delta \ell},$$
since $m e^{-\delta m^2/n} \le n/\delta m$, as in the proof of Proposition~\ref{prop:Janson2}, and the function $x \mapsto (c/x)^x$ is maximized when $x = c/e$; the last inequality holds since $\ell \ge (2C\delta) n / m$. The remainder of the calculation is exactly as in Case~1 and so~\eqref{eq:case5} is at most
$$2^{O(\delta \ell)} \left( \frac{2}{3} \right)^{\ell} {n/2 \choose m} \,+\, e^{-\delta m} {n/2 \choose m},$$
as claimed. This completes the proof of the claim for all $C^{1/3} \sqrt{n} \le m \le \delta n$.

\medskip
Finally, we turn to the case $m = \Theta(n)$. We shall assume first that $m \le n/4$, and then (in Case~4) show how the result for $m > n/4$ follows by the same argument.

\bigskip
\noindent \textbf{Case 3:} $\delta n \le m \le n/4$.
\medskip

The calculation in this case is similar to that in Case~1, except we shall use Theorem~\ref{S+S2} in place of Theorem~\ref{S+S}. Indeed, recall  that $C = C(\delta)$ is sufficiently large, and observe that
$$k' \, \le \, k \, \le \, \frac{\ell^2 n}{\delta m} \, \le \, \frac{2}{\delta^2} \cdot \ell'^2 \qquad  \text{and} \qquad \ell' \, \ge \, \frac{(C\delta) n}{m} \, \ge \, 4C \delta,$$
since $4 \le n / m \le 1 / \delta$, recalling that $k \le \ell^2 n / \delta m$, $\ell \ge (2C\delta) n/m$ and $\ell'' \le \delta \ell$ by~\eqref{eq:ell''}. Hence we may apply Theorem~\ref{S+S2} for each $2 \le \lambda \le 2 / \delta^3$, and deduce that there are at most
$$ 2^{\delta \ell'} \left( \frac{(4\lambda - 3)e}{6} \right)^{\ell'}$$
sets $S' \in \S(k',\ell')$ such that $|S'+S'| \le \lambda |S'|$. 

Now, for each $\lambda > 0$ let $\S_{(\lambda)}(k,\ell)$ denote the collection of sets $S \in \S(k,\ell)$ such that
$$\lambda |S'| \,\le \, |S'+S'| \, \le \, \big( 1 + \delta \big) \lambda |S'|.$$ 
As in Case~1, we shall fix $k'$ and $\ell'$ and count only those $S \in \S_{(\lambda)}(k,\ell)$ such that $|S'| = \ell'$ and $\sum_{a \in S'} (n/2 - a) = k'$; once again, there are only $O(\ell^4)$ choices for $k'$ and $\ell'$ and this will be absorbed by the error term $2^{O(\delta \ell )}$.

Now, by Theorem~\ref{S+S2} and~\eqref{trivial2}, 
\begin{equation}\label{case2}
\sum_{S \in \S_{(\lambda)}(k,\ell)} \max \left\{  e^{-\delta \ell'' m^2 / n}, e^{-\delta m} \right\} {n/2 - |S'+S'| \choose m - \ell}
\end{equation}
is at most
$$2^{O(\delta \ell)} \left( \frac{(4\lambda - 3)e}{6} \right)^{\ell'} {n/4 \choose \ell''}  \max \left\{  e^{-\delta \ell'' m^2 / n}, e^{-\delta m} \right\} \bigg( \frac{2m}{n - 2m} \bigg)^\ell \bigg( \frac{n - 2 \lambda \ell'}{n} \bigg)^{m - \ell} {n/2 \choose m}.$$
Note that we used both the lower bound $|S'+S'| \ge \lambda |S'|$ and the upper bound $|S'+S'| \le (1 + \delta) \lambda |S'|$ from the definition of $\S_{(\lambda)}(k,\ell)$. By the same argument as in Case~1 (see~\eqref{eq:c13} and~\eqref{eq:c14}), we have
\begin{equation}\label{eq:c31}
{n/4 \choose \ell''}  \max \left\{  e^{-\delta \ell'' m^2 / n}, e^{-\delta m} \right\} \bigg( \frac{2m}{n - 2m} \bigg)^{\ell''}  \, \le \,  \bigg( \frac{em}{\ell''} \bigg)^{\ell''} \max \left\{  e^{-\delta \ell'' m^2 / n}, e^{-\delta m} \right\}  \, \le \, 1,
\end{equation}
since $m \le n/4$, $e^{-\delta m^2 / n} \le n^{-C}$ and $\ell'' \le \ell \le \delta^3 m$. Thus~\eqref{case2} is at most
$$2^{O(\delta \ell)} \left( \frac{(4\lambda - 3)e}{6} \right)^{\ell'}  \exp\bigg( - \frac{2\lambda \ell' m}{n} \bigg) \bigg( \frac{2m}{n - 2m} \bigg)^{\ell'} {n/2 \choose m},$$
since $\ell = \ell' + \ell''$ and $\ell \le \delta^3 m$ (so the term $e^{O(\lambda \ell^2/n)}$ is absorbed by the error term). By simple calculus\footnote{The function $\frac{(4x - 3) y}{3(1 - 2y)} e^{1-2xy}$ on $[3(1 - \delta),\infty) \times (0,1/4]$ is maximized at $(3(1 - \delta),1/4)$.}, it is straightforward to show that if $\lambda \ge 3(1 - \delta)$ then this is at most 
\begin{equation}\label{eq:c32}
2^{O(\delta \ell)} \bigg( \frac{3}{2\sqrt{e}} \bigg)^\ell {n/2 \choose m},
\end{equation}
as required. 

Applying the bound~\eqref{eq:c32} with $\lambda = 3(1 - \delta)(1+\delta)^t$, and summing over integers $0 \le t \le 1 / \delta^2$, we obtain a bound for the sum in~\eqref{eq:case5} over sets $S \in \S(k,\ell)$ such that 
$$3(1 - \delta)|S'| \, \le \, |S'+S'| \, \le \, \frac{1}{\delta^3} \cdot |S'| \, \le \, (1+\delta)^{1/\delta^2} |S'|.$$
For those with $|S'+S'| \ge |S'| / \delta^3$, we apply Lemma~\ref{parts} to obtain, exactly as in~\eqref{eq:S+Sbig}, a bound of 
$$2^{O(\delta \ell)} \left( \frac{e^2k'}{\ell'^2} \right)^{\ell'} {n/4 \choose \ell''}  \max \left\{  e^{-\delta \ell'' m^2 / n}, e^{-\delta m} \right\} \bigg( 1 - \frac{\ell'}{\delta^3 n} \bigg)^{m - \ell} \bigg( \frac{2m}{n - 2m} \bigg)^\ell {n/2 \choose m}.$$
By~\eqref{eq:c31}, this is at most
$$2^{O(\delta \ell)} \left( \frac{e^2k'}{\ell'^2} \right)^{\ell'} \exp\left( - \frac{\ell'(m-\ell)}{\delta^3 n} \right) {n/2 \choose m} \, \le \, \bigg( \frac{e^2}{\delta^2} \bigg)^{\ell'} \cdot e^{-\ell' / \delta} {n/2 \choose m} \, \le \, \delta^\ell {n/2 \choose m},$$
since $\delta n \le m \le n/4$, $\ell \le \delta^3 m$ and $k' \le \ell^2n / \delta m \le \ell^2 / \delta^2$. 

Finally, for those $S \in \S_{(\lambda)}(k,\ell)$ with $\lambda < 3(1 - \delta)$, we shall need a different weapon. Observe that 
$$|S'+S'| \, \le \, \big( 1 + \delta \big) \lambda |S'| \, \le \, 3|S'| - 4,$$ 
since $3\delta^2 |S'| \ge \delta ^2 \ell > 4$, which holds by~\eqref{eq:ell''} and since $\ell \ge (2C\delta)n/m \ge 8C\delta$. Hence, by Freiman's $3k-4$ Theorem, it follows that $S'$ is contained in an arithmetic progression of length at most 
$$|S'+S'| - |S'| + 1 \, \le \, \big( \lambda - 1 + 3\delta \big) |S'|,$$
which implies that there are at most $k^2 {(\lambda - 1 + 3\delta)\ell \choose \ell}$ sets $S'$ such that $S \in \S_{(\lambda)}(k,\ell)$. Since $|S'+S'| \ge 2\ell' - 1$ and $k = O(\ell^3)$, it follows that~\eqref{case2} is bounded above by
$$2^{O(\delta \ell)} {(\lambda - 1)\ell \choose \ell} \bigg( \frac{2m}{n - 2m} \bigg)^\ell \bigg( \frac{n - 2\lambda\ell}{n} \bigg)^{m}  {n/2 \choose m} \,\le\, 2^{O(\delta \ell)} \bigg( \frac{4}{e^{3/2}} \bigg)^\ell {n/2 \choose m}.$$
The final inequality again follows by simple calculus: the left-hand side is bounded from above by its value with $\lambda = 3$ and $m = n/4$. Since $4 / e^{3/2} < 3 / 2 \sqrt{e}$, the claim follows in this case also.

\medskip
The proof is essentially complete; all that remains is to show that case $m \ge n/4$ can be deduced easily from the case above.

\bigskip
\noindent \textbf{Case 4:} $n/4 \le m \le (1/2 - \delta)n$.
\medskip

We shall reduce this case to the previous one. Indeed, setting $t = n/2 - m$ and noting that $\delta n \le t \le n/4$ and ${n/2 - \lambda \ell \choose m - \ell} = {n/2 - \lambda \ell \choose t - (\lambda - 1)\ell}$, we find that~\eqref{case2} is at most
\begin{equation}\label{case4}
2^{O(\delta \ell)} \left( \frac{(4\lambda - 3)e}{6} \right)^{\ell} \bigg( \frac{2t}{n - 2t} \bigg)^{(\lambda-1)\ell} \bigg( \frac{n - 2 \lambda \ell}{n} \bigg)^t {n/2 \choose t}.
\end{equation}
Since $|S+S| \ge 2|S| - 1$ for every $S \subset \ZZ$, we have $\lambda \ge 2 - \delta$, and the same calculation shows that~\eqref{case4} is at most 
$$2^{O(\delta \ell)} \bigg( \frac{3}{2\sqrt{e}} \bigg)^\ell {n/2 \choose m},$$ 
as required. This completes the proof of Claim~\ref{end}.
\end{proof}

Finally, let us put together the pieces and show that Claims 1--6 prove Theorem~\ref{CEthm}. By Claim~\ref{almosthalf}, there are $O{n/2 \choose m}$ sum-free $m$-subsets of $[n]$ for every $m \ge \big( \frac{1}{2} - \delta \big) n$. By Claim~\ref{fewsmall}, if $m \le \big( \frac{1}{2} - \delta \big) n$ then there are at most $\delta \cdot 2^{Cn/m} {n/2 \choose m}$ such sets with at least $\delta m$ of its elements less than $n/2$. By Claim~\ref{ellsmall}, there are at most $2^{C n/2m} {n/2 \choose m}$ such sets $I$ with $|S(I)| = \ell(I) \le (2C\delta)n/m$, and by Claim~\ref{kbig} there are at most $O{n/2 \choose m}$ such sets such that $k(I) \ge \ell^2 n / \delta m$ and $\ell(I) \ge n/4m$. Finally, by Claim~\ref{end}, there are at most $O{n/2 \choose m}$ such sets which were not contained in any of the previous cases, i.e., such that $\ell(I) \ge (2C\delta)n/m$ and $k(I) \le \ell^2 n / \delta m$. The induction step, and hence Theorem~\ref{CEthm}, now follows.
\end{proof}

We now sketch how the above proof may be adapted in order to prove Theorem~\ref{CEstruc}.

\begin{proof}[Proof of Theorem~\ref{CEstruc}]
Let $\delta > 0$ be a sufficiently small constant, and let $\omega = \omega(n)$ be an arbitrary function such that $\omega \to \infty$ as $n \to \infty$. Let $C > 0$ be sufficiently large, let $m,n \in \N$ satisfy $m \ge C\sqrt{n \log n}$, and consider the sum-free $m$-sets $I \subset [n]$ such that $I \not\subset O_n$. For simplicity, given such a set $I$ let us write $\ell(I) = |S(I)|$, $k(I) = \sum_{a \in S(I)} (n/2 - a)$ and $a(I) = n/2 - \min(S(I))$. 

Suppose first that $m \ge \big( \frac{1}{2} - \delta \big) n$. Then, by the proof of Claim~\ref{almosthalf}, there are $o{n/2 \choose m}$ such sets with $a(I) \ge \sqrt{\omega} = \sqrt{\omega(n)}$, which implies that  $|S(I)| \le \sqrt{\omega}$ and $k(I) \le \omega$ for almost every sum-free $m$-set in $[n]$, as required. Hence we may assume that $m \le \big( \frac{1}{2} - \delta \big) n$.
 
Next, we observe the following strengthening of Claim~\ref{fewsmall} when $m \ge C\sqrt{n \log n}$.

\begin{claim3}
If $m \ge C\sqrt{n \log n}$, then there are $o{n/2 \choose m}$ sum-free subsets $I \subset [n]$ of size $m$ with $I \not\subset O_n$ and at least $\delta m$ elements less than $n/2$. 
\end{claim3}

\begin{proof}[Proof of Claim~3$\,^{\prime}$]
The proof is almost identical to that of Claim~\ref{fewsmall}. The only difference is that when we bound the number of sum-free $m$-sets $I$ such that $1 \le |I \setminus O_n| \le \delta m$, we replace Proposition~\ref{prop:Janson2} by Proposition~\ref{prop:Janson}, which holds for $m \ge C\sqrt{n \log n}$, and implies that there are at most $o{n/2 \choose m}$ such sets. When bounding the size of the collection $\X$ of sum-free $m$-sets $I \subset [n]$ that satisfy $| I \setminus B | \le \delta^3 m$ for some interval $B$ of length $n/2$, we use~\eqref{eq:C3} and note that 
$$m \cdot e^{- \delta^2 m} \left( \frac{4e}{\delta^2} \right)^{\delta^3 m} 2^{Cn/m} {n/2 \choose m} \, \le \, 2^{-\delta^3 m} {n/2 \choose m},$$
since $\delta^3 m > Cn / m$ for $m \ge C\sqrt{n \log n}$. The rest of the proof is exactly the same.
\end{proof}

By Claim~$3^{\prime}$, we may restrict our attention to sum-free $m$-sets $I \subset [n]$ such that $I \not\subset O_n$ and $|S(I)| \le \delta m$. By Claims~\ref{kbig} and~\ref{end}, there are $o{n/2 \choose m}$ such sets with $\ell(I) \ge \frac{Cn}{m} + \omega(n)$. However, if $\ell(I) \le \frac{Cn}{m} + \omega(n)$ and
$$k(I) \, \ge \, \frac{C^4n^3}{m^3} + \omega(n)^4 \, \ge \, \frac{\ell^2 n}{\delta m},$$
then combining the proofs of Claims~\ref{ellsmall} and~\ref{kbig} proves the theorem. Indeed, first note that by Claim~\ref{kbig} there are $o{n/2 \choose m}$ sum-free $m$-sets $I$ with $\ell(I) \ge \max\{ n/4m, \omega\}$. If $\ell(I) \le n/4m$ then, by~\eqref{C4.1} in the proof of Claim~\ref{ellsmall}, there are at most
\begin{equation}\label{eq:1.2.1}
\sum_{k \,\ge\, Cn^3 / m^3 + \omega} C \left( \frac{e^2k}{\ell^2} \right)^\ell e^{-km^2 / 2n^2}  \left( \frac{2m}{n - 2m} \right)^\ell {n/2 \choose m}
\end{equation}
such sets. Now, recall that $n - 2m \ge \delta m$ and note that~\eqref{eq:1.2.1} is decreasing exponentially in $k$. 
Thus if $m = o(n)$, then~\eqref{eq:1.2.1} is at most 
$$\frac{4n^2}{m^2} \cdot \big( C e^{-C} \big)^{n/2m}  {n/2 \choose m} \, = \, o{n/2 \choose m},$$ 
and if $m = \Theta(n)$ then~\eqref{eq:1.2.1} is at most
$$\frac{4n^2}{m^2} \cdot \omega^{\log \omega} e^{-\sqrt{\omega}} {n/2 \choose m} \, = \, o{n/2 \choose m}.$$ 
Finally, if $n/4m \le \ell \le \omega$ then~\eqref{C4.2} and a similar calculation implies there are at most
$$\sum_{k \,\ge\, \omega^4} C \left( \frac{e^2k}{\ell^2} \right)^\ell e^{-km / 8n\ell}  \left( \frac{2m}{n - 2m} \right)^\ell {n/2 \choose m} \, \le \, \omega^{5\omega} e^{-\omega^2/32} {n/2 \choose m} \, = \, o{n/2 \choose m}$$
such sum-free sets $I$, as required. This completes the proof of Theorem~\ref{CEstruc}.
\end{proof}

Finally, we prove that the bounds on $\ell(I) = |S(I)|$ and $k(I) = \sum_{a \in S(I)}(n/2-a)$ given by Theorem~\ref{CEstruc} are best possible up to a constant factor. Indeed, we shall show that if $C\sqrt{n\log n} \le m = o(n)$ and $\eps > 0$ is sufficiently small, then almost every sum-free $m$-subset $I \subset [n]$ satisfies $\ell(I) \ge \eps n / m$ and $k(I) \ge \eps n^3 / m^3$. 

To see that almost every such $I$ satisfies $\ell(I) \ge \eps n / m$, note that by Lemma~\ref{parts} the number of $m$-sets $I \subset [n]$ satisfying $\ell(I) \le \eps n/m$ and $k(I) \le 2Cn^3/m^3$ is at most
\begin{eqnarray*}
 \sum_{\ell = 0}^{\eps n / m} \left( \frac{e^2}{\ell^2} \cdot \frac{2Cn^3}{m^3} \right)^\ell \binom{n/2}{m-\ell} & \le & \sum_{\ell = 0}^{\eps n / m} \left( \frac{e^2}{\ell^2} \cdot \frac{2Cn^3}{m^3} \cdot \frac{3m}{n} \right)^\ell \binom{n/2}{m}\\ 
 & \le & \frac{2\eps n}{m} \left(\frac{6e^2C}{\eps^2}\right)^{\eps n/m} \binom{n/2}{m} \, \le \, 2^{O(\sqrt{\eps}n/m)} \binom{n/2}{m},
\end{eqnarray*}
where the first inequality follows from~\eqref{trivial} and the fact that $m = o(n)$, and the second inequality follows from the fact that $\big( \frac{6e^2Cn^2}{m^2\ell^2} \big)^\ell$ is increasing for $\ell \in (0,\eps n/m]$. On the other hand, by Proposition~\ref{CElower}, there are at least $2^{cn/m}\binom{n/2}{m}$ sum-free $m$-sets in $[n]$, and since $C\sqrt{n\log n} \le m = o(n)$, by Theorem~\ref{CEstruc} almost all of them satisfy $k(I) \le 2Cn^3/m^3$. Thus almost all sum-free $m$-sets $I \subset [n]$ satisfy $\ell(I) > \eps n/m$, as claimed.

To prove that almost every sum-free $m$-subset $I \subset [n]$ satisfies $k(I) \ge \eps n^3 / m^3$ we again apply Lemma~\ref{parts}. Indeed, observe that the number of $m$-subsets $I \subset [n]$ satisfying $\ell(I) \le 2Cn/m$ and $k(I) \le \eps n^3/m^3$ is at most
\begin{eqnarray*}
 \sum_{\ell = 0}^{2C n / m} \left(\frac{e^2}{\ell^2} \cdot \frac{\eps n^3}{m^3} \right)^\ell \binom{n/2}{m-\ell} & \le & \sum_{\ell = 0}^{2C n / m} \left( \frac{e^2}{\ell^2} \cdot \frac{\eps n^3}{m^3} \cdot \frac{3m}{n} \right)^\ell \binom{n/2}{m} \\
 & \le & \frac{3C n}{m} \cdot \exp\left( 2\sqrt{3\eps} \cdot \frac{n}{m} \right) \binom{n/2}{m} \, \le \, 2^{O(\sqrt{\eps}n/m)} \binom{n/2}{m},
\end{eqnarray*}
where the first inequality follows from~\eqref{trivial} and the fact that $m = o(n)$, and the second inequality follows from the fact that $\big( \frac{3\eps e^2 n^2}{\ell^2 m^2} \big)^\ell$ is maximized when $\ell = \sqrt{3\eps} (n/m)$. By Proposition~\ref{CElower} and Theorem~\ref{CEstruc}, there are at least $2^{cn/m}$ sum-free $m$-sets in $[n]$ and almost all of them satisfy $\ell(I) \le 2Cn/m$. It follows that, if $\eps > 0$ is sufficiently small, then almost all sum-free $m$-sets $I \subset [n]$ satisfy $k(I) > \eps n^3/m^3$, as required.

\section*{Acknowledgements}

The third and fourth authors would like to thank Simon Griffiths and Gonzalo Fiz Pontiveros for several useful discussions.

\end{document}